\newfont{\cyr}{wncyr10 scaled 1100}
\theoremstyle{plain}
\newtheorem{theorem}{Theorem}[section]
\newtheorem{proposition}[theorem]{Proposition}
\newtheorem{propo}[theorem]{Proposition}
\newtheorem{coro}[theorem]{Corollary}
\theoremstyle{definition}
\newtheorem{defi}[theorem]{Definition}
\newtheorem{question}[theorem]{Question}
\newtheorem{examplewr}[theorem]{Example}
\theoremstyle{remark}
\newtheorem{obswr}[theorem]{Observation}
\newtheorem{remarkwr}[theorem]{Remark}
\newenvironment{remark}{\begin{remarkwr}\begin{upshape}}{\end{upshape}\end{remarkwr}}
\DeclareMathOperator{\dR}{\mathrm{dR}}
\DeclareMathOperator{\BK}{BK}
\DeclareMathOperator{\et}{et}
\DeclareMathOperator{\id}{Id}
\DeclareMathOperator{\pr}{pr}
\DeclareMathOperator{\cyc}{cyc}
\DeclareMathOperator{\fin}{f}
\DeclareMathOperator{\nr}{nr}
\DeclareMathOperator{\Spf}{Spf}
\DeclareMathOperator{\alg}{alg}
\DeclareMathOperator{\grp}{grp}
\DeclareMathOperator{\ur}{ur}
\DeclareMathOperator{\Eis}{Eis}
\DeclareMathOperator{\ad}{ad}
\DeclareMathOperator{\Dd}{DD}
\DeclareMathOperator{\Katz}{Katz}
\newcommand{\cC}{\mathcal C}
\newcommand{\cU}{\mathcal U}
\newcommand{\cW}{\mathcal W}
\newcommand{\G}{\Gamma}
\newcommand{\Q}{\mathbb{Q}}
\newcommand{\Z}{\mathbb{Z}}
\newcommand{\Gal}{\mathrm{Gal\,}}
\newcommand{\GL}{\mathrm{GL}}
\newcommand{\Tr}{\mathrm{Tr}}
\newcommand{\Fil}{\mathrm{Fil}}
\newcommand{\Frob}{\mathrm{Fr}}
\newcommand{\End}{\mathrm{End}}
\newcommand{\Fr}{\mathrm{Fr}}
\newcommand{\ord}{{\mathrm{ord}}}
\newfont{\gotip}{eufb10 at 12pt}
\newcommand{\cO}{{\mathcal O}}
\newcommand{\cL}{{\mathcal L}}
\newcommand{\cS}{{\mathcal S}}
\newcommand{\cD}{{\mathcal D}}
\newcommand{\ra}{\rightarrow}
\newcommand{\lra}{\longrightarrow}
\newcommand{\hg}{{\mathbf{g}}}
\newcommand{\hh}{{\mathbf h}}
\newcommand{\cl}{{\mathrm{cl}}}
\DeclareMathOperator{\Hom}{Hom}
\newcommand{\res}{\mathrm{res}}
\newcommand{\mat}[4]{\left(\begin{array}{cc}#1&#2\\#3&#4\end{array}\right)}
\begin{document}

\title[Derived Beilinson--Flach elements and the arithmetic of the adjoint]{Derived Beilinson--Flach elements and the arithmetic of the adjoint of a modular form}

\author{\'Oscar Rivero and Victor Rotger}

\begin{abstract}
Kings, Lei, Loeffler and Zerbes constructed in \cite{LLZ}, \cite{KLZ} a three-variable Euler system $\kappa({\bf g},{\bf h})$ of Beilinson--Flach elements associated to a pair of Hida families $({\bf g},{\bf h})$ and exploited it to obtain applications to the arithmetic of elliptic curves, extending the earlier work \cite{BDR2}. The aim of this article is showing that this Euler system also encodes arithmetic information concerning the group of units of the associated number fields. The setting becomes specially novel and intriguing  when ${\bf g}$ and ${\bf h}$ specialize in weight $1$ to $p$-stabilizations of eigenforms such that one is dual of another. We encounter an exceptional zero phenomenon which forces the specialization of $\kappa({\bf g}, {\bf h})$ to vanish and we are led to study the {\em derivative} of this class. The main result we obtain is the proof of the main conjecture of \cite{DLR2} on iterated integrals and the main conjecture of \cite{DR2.5} for Beilinson--Flach elements  in the adjoint setting. The main point of this paper is that the methods of \cite{DLR1}, \cite{DLR2} and \cite{CH}, where the above conjectures are proved when the weight $1$ eigenforms have CM, do not apply to our setting and new ideas are required. In loc.\,cit.\,a crucial ingredient is a factorization of $p$-adic $L$-functions, which in our scenario is not available due to the lack of critical points. Instead we resort to the principle of {\em improved}  Euler systems and $p$-adic $L$-functions to reduce our problems to questions which can be resolved using Galois deformation theory. We expect this approach may be adapted to prove other cases of the Elliptic Stark Conjecture and of its generalizations that are appearing in the literature.
\end{abstract}

\address{O. R.: Departament de Matem\`{a}tiques, Universitat Polit\`{e}cnica de Catalunya, C. Jordi Girona 1-3, 08034 Barcelona, Spain}
\email{oscar.rivero@upc.edu}
\address{V. R. (corresponding author): Departament de Matem\`{a}tiques, Universitat Polit\`{e}cnica de Catalunya, C. Jordi Girona 1-3, 08034 Barcelona, Spain}
\email{victor.rotger@upc.edu}

\subjclass[2010]{11F67 (primary); 11F85, 11G18, 19F27 (secondary)}
\keywords{$p$-adic $L$-functions, Hida--Rankin convolution, special values, Beilinson--Flach elements, exceptional zeros}

\maketitle

\tableofcontents

\section{Introduction}

The purpose of this article is proving two results conjectured by H. Darmon, A. Lauder and one of the authors in \cite{DLR2} and \cite{DR2.5} respectively, concerning the interplay between the arithmetic of units in number fields, the theory of Coleman iterated integrals, and the Rankin $p$-adic $L$-function and Euler system of Beilinson--Flach elements associated to a pair $(\hg,\hh)$ of Hida families of modular forms.

The first theorem of this article is Theorem A (together with its equivalent form given in Theorem A'), which yields a proof of the main conjecture of \cite{DLR2} in the adjoint setting. Although Beilinson--Flach elements are certainly behind the scenes, we find interesting to remark that this Euler system is not involved neither in the statement nor the proof we provide of Theorems A, A', and can be phrased  in purely analytic terms by means of $p$-adic iterated integrals and $p$-adic $L$-functions. It is only later in the note that we explore the consequences that Theorem A has on the weight one specialisations of the Euler system of Beilinson-Flach elements, as described in Theorems B and C.

Let $\chi$ be a Dirichlet character of level $N\geq 1$ and let $M_k(N,\chi)$ (resp.\,$S_k(N,\chi)$) denote the space of (resp.\,cuspidal) modular forms of weight $k$, level $N$ and nebentype $\chi$. Let $g = \sum_{n\geq 1} a_n q^n  \in S_1(N,\chi)$ be a normalized newform and let $g^* = g\otimes \chi^{-1}$ denote its twist by the inverse of its nebentype. Let
\[ \varrho_g: \Gal(H_g/\Q) \hookrightarrow \GL(V_g) \simeq \GL_2(L), \quad \varrho_{\mathrm{ad}^0(g)}: \Gal(H/\Q) \hookrightarrow \GL(\mathrm{ad}^0(g)) \simeq \GL_3(L) \]
denote the Artin representations associated to $g$ and its adjoint, respectively. Here $H_g \supseteq H$ denote the finite Galois extensions of $\Q$ cut out by these representations, and $L$ is a sufficiently large finite extension of $\Q$ containing their traces. The three-dimensional representation $\mathrm{ad}^0(g)$ may be identified with the subspace of $\End(V_g)$ of null-trace endomorphisms, on which $G_\Q$ acts by conjugation. There is a natural decomposition of $L[G_\Q]$-modules $\ad(g) := \End(V_g) = L\oplus \mathrm{ad}^0(g)$, where $L$ stands for the trivial representation.

Fix a prime $p\nmid N$ and let $S_k^{\mathrm{oc}}(N,\chi)$ denote the space of overconvergent $p$-adic modular forms of weight $k$, tame level $N$ and character $\chi$. Fix an embedding $\bar\Q \subset \bar\Q_p$ and let $L_p$ denote the completion of $L$ in $\bar\Q_p$.

Label and order the roots of the $p$-th Hecke polynomial of $g$ as $X^2-a_p(g)X+\chi(p) = (X-\alpha)(X-\beta)$. We assume throughout that
\begin{enumerate}
\item[(H1)] The reduction of $\varrho_g$ mod $p$ is irreducible;

\item[(H2)] $g$ is $p$-distinguished, i.e.\,$\alpha \ne \beta \, (\mathrm{mod} \, p)$, and

\item[(H3)] $\varrho_g$ is not induced from a character of a real quadratic field in which $p$ splits.

\end{enumerate}

Let \[ g_\alpha(q) = g(q) -\beta g(q^p) \] denote the $p$-stabilization of $g$ on which the Hecke operator $U_p$ acts with eigenvalue $\alpha$. Enlarge $L$ if necessary so that it contains all Fourier coefficients of $g_\alpha$.

As shown in \cite{BeDi} and \cite{DLR1}, the above hypotheses ensure that any generalized overconvergent modular form with the same generalized eigenvalues as $g_\alpha$ is classical, and hence simply a multiple of $g_\alpha$. This allows to define a canonical projector
$$
e_{g_\alpha}: S_1^{\mathrm{oc}}(N,\chi) \lra L_p
$$
which extracts from an overconvergent modular form its coefficient at $g_\alpha$ with respect to an orthonormal basis: cf.\,\cite{DLR1} for more details.

One can attach a $p$-adic invariant $I_p(g)\in L_p$ to $g_\alpha$ as follows. Let
\[ Y:=Y_1(N) \quad \subset \quad X:= X_1(N) \]
denote the models over $\Q$ of the (affine and projective, respectively) modular curves classifying pairs $(A,P)$ where $A$ is a (generalised) elliptic curve and $P$ is a point of order $N$ on $A$.


For $a\in (\Z/N\Z)^\times$, let $\mathfrak{g}_{a;N}$ be Kato's {\em Siegel unit} whose $q$-expansion is given by
\begin{equation}\label{def-gaM}
\mathfrak{g}_{a;N} := (1-\zeta_N^a) q^{1/12} \prod_{n=1}^\infty (1-\zeta_N^a q^n) (1-\zeta_N^{-a} q^n), \end{equation}
where $\zeta_N$ is a fixed primitive $N$-th root of unity.
The unit $\mathfrak{g}_{a;N}$
can naturally be viewed as belonging to
$\Q  \otimes \cO_{Y}^\times$, and its $q$-expansion is defined over $\Q(\mu_N)$.

Set
\begin{equation}\label{eqn:def-g-chi}
\mathfrak{g} = \frac{1}{2}\otimes \prod_{a=1}^N \mathfrak{g}_{a;N} \in \Q  \otimes \cO_{Y}^\times,
\end{equation}
and define
\[ E_0 = \log_p \mathfrak{g}. \]
This is a locally analytic modular form of weight $0$ on $Y$. It follows from \eqref{def-gaM} and \eqref{eqn:def-g-chi} that the logarithmic derivative of $\mathfrak{g}$ is
\begin{equation}
\label{dlog=h}
d E_0 = E_2 \frac{dq}{q} \in \Omega^1_{Y},
\end{equation}
where
\begin{equation*}
E_2 = \frac{1}{2} \cdot \zeta(-1) + \sum_{n=1}^\infty  \left(\sum_{d|n} d \right) q^n
\end{equation*}
is the classical Eisenstein series of weight $2$.

Let $E_0^{[p]} := \sum_{p\nmid n} a_n(E_0) q^n$ denote the $p$-depletion of $E_0$; this is an overconvergent modular form and we may define
\begin{equation}\label{L-invariant}
I_p(g) := e_{g_\alpha}(E_0^{[p]}\cdot g_\alpha).
\end{equation}

The invariant $I_p(g)$ can be recast as a $p$-adic iterated integral, denoted $I'_p(E_2,g,g^*)$ in \cite{DLR2}. The first main result of this note, conjectured in \cite{DLR2}, asserts that $I_p(g)$ is equal to the following motivic expression. As shown in \cite[Lemma 1.1]{DLR2}, we have
\[
\dim_L (\cO_H^\times \otimes \mathrm{ad}^0(g))^{G_\Q} = 1, \quad \dim_L (\cO_H[1/p]^\times/p^{\mathbb Z} \otimes \mathrm{ad}^0(g))^{G_\Q} = 2.
\]
Fix a generator $u$ of $ (\cO_H^\times \otimes \mathrm{ad}^0(g))^{G_\Q}$ and also an element $v$ of $(\cO_H^{\times}[1/p]^{\times} \otimes \mathrm{ad}^0(g))^{G_\Q}$ in such a way that $\{ u, v\}$ is a basis of $(\cO_H[1/p]^\times/p^\Z \otimes \mathrm{ad}^0(g))^{G_\Q}$. The element $v$ may be chosen to have $p$-adic valuation $\ord_p (v)=1$, and we do so.

Viewed as a $G_{\Q_p}$-module, $\mathrm{ad}^0(g)$ decomposes as $\mathrm{ad}^0(g) = L \oplus L^{\alpha \otimes \bar \beta} \oplus L^{\beta \otimes \bar \alpha}$, where each line is characterized by the property that the arithmetic Frobenius $\Frob_p$ acts on it with eigenvalue $1$, $\alpha/\beta$ and $\beta/\alpha$, respectively\footnote{The decomposition of $\mathrm{ad}^0(g)$ as the direct sum of three canonical lines is also available when $\alpha/\beta=\beta/\alpha=-1$, see \eqref{gh-decom} and \eqref{decom-uv} for details.}. Let $H_p$ denote the completion of $H$ in $\bar\Q_p$  and let
\[
u_1, \,\, u_{\alpha \otimes \bar \beta},  \,\,  u_{\beta \otimes \bar \alpha},  \,\,  v_1,  \,\,  v_{\alpha \otimes \bar \beta},  \,\,  v_{\beta \otimes \bar \alpha} \in H_p^\times \otimes_{\Q} L \quad (\mathrm{mod} \, L^\times)
\]
denote the projection of the elements $u$ and $v$ in $(H_p^\times \otimes \mathrm{ad}^0(g))^{G_{\Q_p}}$ to the above lines: cf.\,\eqref{decom-uv} for more details. By construction we have $u_1, v_1 \in \Q_p^\times$ and
\[
\Frob_p(u_{\alpha \otimes \bar \beta}) = \frac{\beta}{\alpha} u_{\alpha \otimes \bar \beta}, \, \Frob_p(v_{\alpha \otimes \bar \beta}) = \frac{\beta}{\alpha} v_{\alpha \otimes \bar \beta}, \, \Frob_p(u_{\beta \otimes \bar \alpha}) = \frac{\alpha}{\beta} u_{\beta \otimes \bar \alpha}, \, \Frob_p(v_{\beta \otimes \bar \alpha}) = \frac{\alpha}{\beta} v_{\beta \otimes \bar \alpha}.
 \]

Let
\[
\log_p: H_p^\times \otimes L \lra H_p \otimes L
\]
denote the usual $p$-adic logarithm.

\vspace{0.2cm}

{\bf Theorem A.} {\em
The following equality holds in $L_p \subset H_p \otimes L \quad (\mathrm{mod} \, L^\times)$:
\begin{equation}\label{main1}
I_p(g) =  \frac{1}{\log_p(u_{\alpha \otimes \bar \beta})} \times \det \begin{pmatrix} \log_p u_1 & \log_p u_{\alpha \otimes \bar \beta} \\ \log_p v_1 & \log_p v_{\alpha \otimes \bar \beta} \end{pmatrix}.
\end{equation}
}

\vspace{0.2cm}

This is \cite[Conjecture 1.2]{DLR2} specialized to the pair $(g,g^*)$: note that  both statements are equivalent because $(\cO_H^\times \otimes V_g^\alpha)^{G_{\Q_p}} \simeq \Hom_{G_{\Q_p}}(V_g^\beta,\cO_H^\times)$.

Let $\mathcal L(\ad^0(g_\alpha))$ be Hida's analytic $\mathcal L$-invariant attached to the adjoint Galois representation of $g_\alpha$; cf.\,\eqref{citro}. As we recall in Proposition \ref{impad}, there are several equivalent definitions of this invariant.


These $\cL$-invariants are in general difficult to compute, and explicit expressions for them are rather rare. The proof of the above result, which we describe further below in this introduction, shows that Theorem A may be recast as a formula for $\mathcal L(\ad^0(g_\alpha))$. Since this might be of independent interest, specially for the reader less acquainted with the Stark elliptic conjectures of \cite{DLR1}, \cite{DLR2} and more familiar with the theory of $\cL$-invariants, we quote it below as a separate statement:

\vspace{0.2cm}

{\bf Theorem A'.} {\em
The analytic $\mathcal L$-invariant of $\ad^0(g_\alpha)$ is
\begin{equation}\label{main1'}
\mathcal L(\ad^0(g_{\alpha})) =  \frac{ \log_p(u_1)\log_p(v_{\alpha \otimes \bar \beta}) - \log_p(u_{\alpha \otimes \bar \beta})\log_p(v_1)}{\log_p(u_{\alpha \otimes \bar \beta})} \quad (\mathrm{mod} \, L^\times).
\end{equation}
}

\vspace{0.2cm}

\begin{remark} The prototypical case where an explicit formula for the $\cL$-invariant is known arises of course when $E/\Q$ is an elliptic curve of split multiplicative reduction at $p$. In that case
$\mathcal L_p(E) = \frac{\log_p(q_E)}{\ord_p(q_E)}$ where $q_E \in p \mathbb Z_p$ is a Tate period for $E$. Observe that this expression and the one in Theorem A' are in fact very similar, as both may be recast as
\begin{equation}\label{Lgen}
\cL = \frac{\log_p(\kappa)}{\ord_p(\kappa)}
\end{equation}
where $\kappa$ is an element in $H^1(\Q_p,\Q_p(1))$ arising from some global class in motivic cohomology. In the classical case one has $\kappa = q_E$, which might be regarded as an element in the {\em extended} Mordell-Weil group $\tilde E(\Q)$, or Nekov\'ar's extended Selmer group of $E$. In Theorem A' one has, up to scalar, $\kappa=\log_p(v_{\alpha \otimes \bar \beta})u_1 - \log_p(u_{\alpha \otimes \bar \beta}) v_1$ in the unit group of $H$; note that recipe \eqref{Lgen} indeed gives rise to \eqref{main1'} because $\ord_p(u_1)=0$ and $\ord_p(v_1)=1$.
\end{remark}

The second main result of this note may be regarded as a conceptual explanation of Theorem A, as it establishes a connection between the iterated integral $I_p(g)$, a special value of Hida's $p$-adic Rankin $L$-function associated to the pair $(g,g^*)$, and the generalized Kato classes arising from the three-variable Euler system of Beilinson--Flach elements constructed in \cite{KLZ}.

In order to describe it more precisely, let $\Lambda = \Z_p[[\Z^\times_p]]$ denote the Iwasawa algebra and denote  $\mathcal W = \Spf(\Lambda)$ the associated weight space. As shown by Wiles in \cite{Wi} (cf.\,also \cite[\S 7.2]{KLZ} for the normalizations we adopt), associated  to $g_\alpha$ there are:
\begin{enumerate}
\item a  finite flat extension $\Lambda_{\hg}$ of $\Lambda$, giving rise to a covering $\mathrm{w}: \mathcal W_{{\bf g}} = \Spf(\Lambda_{\hg}) \lra \cW$;

\item a family of overconvergent $p$-adic ordinary modular forms ${\bf g}$ with coefficients in $\Lambda_{\hg}$ specializing to $g_{\alpha}$ at some point $y_0\in \mathcal W_{{\bf g}}$ of weight $\mathrm{w}(y_0) = 1$; our running assumptions imply that the above Hida family  passing through $g_\alpha$ is unique by \cite{BeDi}.

\item a Galois representation $\varrho_{\hg}: G_\Q \lra  \GL_2(\Lambda_{\hg})$ characterized by the property that all its classical specializations coincide with Deligne's Galois representation associated to the corresponding specialization of the Hida family.
\end{enumerate}

Let $\mathbb V_{\hg}$ denote the rank two $\Lambda_{\hg}$-module realizing the Galois representation $\varrho_{\hg}$ as specified e.g.\,in \cite[\S 7.2]{KLZ}, where this is denoted $M(\hg)^*$. Let ${\bf g}^* := \hg \otimes \chi^{-1}$ denote the twist of $\hg$ by the inverse of its tame nebentype. Note that ${\bf g}^*$ specializes at $y_0$ to the eigenform $g_{\alpha} \otimes \chi^{-1} = g_{1/\beta}^*$, namely the $p$-stabilization of $g^*$ on which $U_p$ acts with eigenvalue $1/\beta$.

Let \[ \varepsilon_{\cyc}: G_{\Q} \rightarrow \Gal(\Q(\mu_{p^{\infty}})/\Q) \rightarrow  \Z_p^\times \] denote the $p$-adic cyclotomic character. Let $\underline{\varepsilon}_{\cyc}$ be the composition of $ \varepsilon_{\cyc}$ with the natural inclusion $\Z_p^\times \subset \Lambda^{\times}$ taking $z$ to the group-like element $[z]$ in $ \Lambda^{\times}$.

Define the three-variable Iwasawa algebra  $\Lambda_{{\bf g g^*}}:=\Lambda_{{\bf g}} \hat \otimes_{\mathbb Z_p} \Lambda_{{\bf g^*}} \hat \otimes_{\mathbb Z_p} \Lambda$  and  the $\Lambda_{\hg \hg^*}[G_\Q]$-module $$\mathbb V_{{\hg \hg^*}}:=\mathbb V_{{\hg}} \hat\otimes_{\mathbb Z_p} \mathbb V_{{\hg^*}} \hat\otimes_{\mathbb Z_p} \Lambda(\varepsilon_{\cyc} \underline{\varepsilon}_{\cyc}^{-1}).$$

The article \cite{KLZ} attaches to $({\bf g},{\bf g}^*)$ a $\Lambda$-adic global cohomology class \[ \kappa({\bf g},{\bf g}^*) \in H^1(\Q,\mathbb V_{{\hg \hg^*}}) \] parametrized by the triple product of weight spaces $\cW_{\hg \hg^*} := \mathcal W_{{\bf g}} \times \mathcal W_{{\bf g}} \times \mathcal W$, where the first two variables are afforded by the weight of the Hida families ${\bf g}$ and ${\bf g}^*$, and the third one is the cyclotomic variable.

The common key strategy in several recent works on the arithmetic of elliptic curves (\cite{BDR2}, \cite{DR2},\cite{LLZ},\cite{KLZ},\cite{CH}) consists in proving a {\em reciprocity law} relating a suitable specialization of a $\Lambda$-adic class as the one above to the critical value of the underlying classical $L$-function. The non-vanishing of the appropriate (classical or $p$-adic) $L$-value can then be invoked to show that the associated global cohomology class is not trivial, and one can derive striking arithmetic consequences from this (cf.\,e.g.\,\cite[Theorems A and B]{DR2}, \cite[Corollary C]{KLZ}).

Our setting differs in several key aspects from the previous ones. Namely, while it is also natural to consider the specialization
\begin{equation}\label{kappa}
\kappa(g_{\alpha},g_{1/\beta}^*):=\kappa({\bf g},{\bf g}^*)(y_0,y_0,0) \in H^1(\Q,V_g\otimes V_{g^*}(1))
\end{equation}
of the Euler system of Beilinson--Flach elements
at the point $(y_0,y_0,0)$, one can show that this global cohomology class is trivial
and thus no arithmetic information can be extracted directly from it: cf.\,Theorem \ref{classes-zero}.

The fact that $\kappa(g_{\alpha},g_{1/\beta}^*)=0$ might be regarded as an {\em exceptional zero phenomenon}, albeit quite different from the ones one typically encounters in e.g.\,the pioneering work \cite{MTT} of Mazur, Tate and Teitelbaum, because our case corresponds to a point lying {\em outside} the classical region of interpolation of the associated Hida--Rankin $p$-adic $L$-function. But it still keeps the same flavor, because the vanishing of the global cohomology class \eqref{kappa} is caused by the cancellation of the Euler-like factor at $p$ arising in the interpolation process of the construction of the Euler system. To be more precise, the situation is more delicate than that: for classical points $y\in \cW_{\hg}$ of weights $\ell>1$, the triples $(y,y,\ell-1)$ do lie in the region of {\em geometric} interpolation of the Euler system of Beilinson--Flach elements, and the main theorem of \cite{KLZ} applies and implies that $\kappa({\bf g},{\bf g}^*)(y,y,\ell-1) = 0$. The vanishing of $\kappa(g_{\alpha},g_{1/\beta}^*)$ then follows by a density argument exploiting various $\Lambda$-adic Perrin-Riou regulators; we refer to \S \ref{self-dual} for more details.

We are hence placed to work with the {\em derived class} $\kappa'(g_{\alpha},g_{1/\beta}^*) \in  H^1(\mathbb Q, \mathrm{ad}^0(g)(1))$, which is introduced in \S \ref{deri} as the derivative along the weight of $\hg^*$ of the $\Lambda$-adic class $\kappa({\bf g},{\bf g}^*)$ at $(y_0,y_0,0)$.

In \cite{DR2.5} it was laid a conjecture proposing an explicit description of the generalized Kato classes arising from the Euler systems of diagonal cycles of \cite{DR2}. This conjecture, together with the expected (but so far also unproved) behavior of certain periods arising from Hida theory, is shown to imply the main conjecture of \cite{DLR1} for twists of elliptic curves by Artin representations. This is seen to provide a conceptual interpretation of the numerical examples computed in  \cite{DLR1}.

At the time of writing  \cite{DR2.5}, the authors had in mind that a parallel formulation should also hold for the Euler system of Beilinson--Flach elements attached to pairs $(g,h)$ of eigenforms of weights $(1,1)$ in connection with \cite{DLR2}. This is indeed the case for arbitrary pairs $(g,h)$ such that $h\ne g^*$, as there are no exceptional zero phenomena and the constructions and conjectures \cite{DR2.5} can be adapted to the setting of units of number fields. We refer to \cite{RR} for the details.

However, pairs $(g,g^*)$ present a completely different scenario, as we already hinted at above. Not only $\kappa(g_{\alpha},g_{1/\beta}^*) = 0$, but it also turns out that the derived class $\kappa'(g_{\alpha},g_{1/\beta}^*)$ is {\em not} crystalline at $p$, as opposed to the set-up of \cite{DR2.5} and \cite{RR}. Yet it is still possible to formulate a conjecture analogous to loc.\,cit.,\,proposing an explicit description of $\kappa'(g_{\alpha},g_{1/\beta}^*)$ in terms of the $\mathrm{ad}^0(g)$-isotypical component of the group of $p$-units of $H$. We can in fact prove it, giving rise to the second main result of this paper.

Let $H_{\fin,p}^1(\mathbb Q, \mathrm{ad}^0(g)(1))$ denote the subspace of $H^1(\mathbb Q,\mathrm{ad}^0(g)(1))$ consisting of classes that are de Rham at $p$ and unramified at all remaining places. Kummer theory (cf.\,Proposition \ref{Kummer} below) gives rise to a canonical isomorphism
\begin{equation}\label{kum}
 \mathcal O_H[1/p]^{\times}[\mathrm{ad}^0(g)] \otimes L_p \xrightarrow{\simeq} H_{\fin,p}^1(\mathbb Q, \mathrm{ad}^0(g)(1))
\end{equation}
and we shall use \eqref{kum} throughout to identify these two spaces.



\vspace{0.2cm}

{\bf Theorem B.} {\em
Assume that $\mathcal L(\ad^0(g_{\alpha})) \neq 0$. The equality
\begin{equation}\label{main2}
\kappa'(g_{\alpha},g_{1/\beta}^*) =  \mathcal L(\ad^0(g_{\alpha})) \cdot  \frac{ \log_p(v_{\alpha \otimes \bar \beta}) u - \log_p(u_{\alpha \otimes \bar \beta})v}{\log_p(u_{\alpha \otimes \bar \beta})}
\end{equation}
holds in $\frac{\mathcal O_H[1/p]^{\times}}{p^\Z}[\mathrm{ad}^0(g)] \otimes L_p  \quad (\mathrm{mod} \, L^\times)$.
}

\vspace{0.2cm}

Here it again becomes apparent the notable differences between the phenomena occurring in say \cite{BDR2}, \cite{DR2}, \cite{KLZ} and this article. In loc.\,cit.,\,the non-vanishing of the central critical $L$-value is shown to imply the triviality of the Mordell-Weil group; conversely, the vanishing of the central $L$-value should imply the non-triviality of the Mordell-Weil group.

In contrast, in our setting we have $L(g,g^*,1) \ne 0$ and nonetheless the associated motivic groups $\mathcal O_H^{\times}[\mathrm{ad}^0(g)]$ and $\mathcal O_H[1/p]^{\times}[\mathrm{ad}^0(g)]$ have positive rank. This is ultimately explained by the fact that $L(g,g^*,s)=L(\ad^0(g),s) \cdot \zeta(s)$ factors as the product of two zeta functions having a simple zero and a simple pole at $s=1$, respectively.

\vspace{0.2cm}
\noindent

\begin{remark}
As we show in \S \ref{proof-main}, it is actually possible to prove that $\mathcal L(\ad^0(g_{\alpha})) \neq 0$ in many dihedral cases, where $\varrho_g$ is induced from a finite order character of the Galois group of a (real or imaginary) quadratic field. We refer to loc.\,cit.\,for more details. On the other hand, when $g$ is exotic, we still expect the $\cL$-invariant not to vanish systematically, but proving this properly appears to be a less accessible question in the theory of transcendental $p$-adic numbers.
\end{remark}

The main ingredients in the proof of Theorems A, A' and B are the following:

\vspace{0.2cm}
\noindent

{\it {\bf (I)}}  Let $L_p({\bf g},{\bf g^*})$ be the three-variable Hida--Rankin $p$-adic $L$-function as introduced in \cite{Hi2} or \cite[\S 3.6]{Das3}, and let $L_p(g_{\alpha},g_{1/\beta}^*,s):=L_p({\bf g},{\bf g^*})(y_0,y_0,s)$ denote the restriction to $\{ y_0,y_0\} \times \cW$. This function satisfies a functional equation relating the values at $s$ and $1-s$.

As explained in \cite{DLR2}, the $p$-adic iterated integral introduced in \eqref{L-invariant} may be recast as a special value of a $p$-adic $L$-function, namely
\cite[Lemma 4.2]{DLR2} asserts that under our running assumptions H1-H2-H3 that are in place throughout the article, we have
\[ I_p(g)  = L_p(g_{\alpha},g_{1/\beta}^*,0) = L_p(g_{\alpha},g_{1/\beta}^*,1) \quad (\mbox{mod }\, L^\times). \]

Define $\cS_{\mathrm{Hida}} \subset \mathcal W_{{\bf g}} \times \mathcal W_{{\bf g}} \times \mathcal W$ as the surface for which the collection of crystalline points $(y,z,s)$ of weights $(\ell,m,m)$ is Zariski dense. The Euler-like factor at $p$ showing-up in the interpolation formula of Hida--Rankin's $p$-adic $L$-function turns out to be a rigid-analytic Iwasawa function when restricted to $\mathcal S_{\mathrm{Hida}}$. In $\S$2.3 we recall how this observation allows  to conclude  thanks to \cite{Das3} that
\begin{equation}\label{LLL}
\mathcal L(\ad^0(g_{\alpha})) := \alpha'_{\hg}(y_0) = I_p(g) \pmod{L^{\times}}.
\end{equation}

Computing the latter derivative is a question which can be reduced to a problem in Galois deformation theory as in \cite{DLR3}, although the methods in loc.\,cit.\,do not apply directly and need to be adapted in order to cover our setting. This is carried out in \S \ref{sec:deform}, and allows us to prove Theorems A and A', as we spell out in detail in \S \ref{proof-main}.

\vspace{0.2cm}
\noindent

{\it {\bf (II)}}
The main result of \S \ref{sec-derived} is Theorem \ref{teobf}, which we state here in slightly rough terms (as the Bloch--Kato logarithm map appearing below has not been specified; cf.\,loc.\,cit.\,for more details):

{\bf Theorem C.} (Theorem \ref{teobf}) {\em
The {\it derived} Beilinson--Flach element satisfies
\begin{equation}\label{1a}
\log_p(\kappa'(g_{\alpha},g_{1/\beta}^*)) = \mathcal L(\ad^0(g_{\alpha})) \cdot L_p(g_{\alpha},g_{1/\beta}^*,0)  \pmod{L^{\times}}.
\end{equation}
}
We may see this as an {\it exceptional zero formula}, reminiscent of the main theorem of \cite{GS}, which asserts that for an elliptic curve $E/\Q$ with split multiplicative reduction at $p$, we have
\[
L_p'(E,1) = \mathcal L_p(E) \cdot L_{\alg}(E,1).
\]

We thus may invoke Theorem A, which combined with  \eqref{1a} implies that
$$
\log_p(\kappa'(g_{\alpha},g_{1/\beta}^*)) = \Big( \, \frac{ \log_p(u_1)\log_p(v_{\alpha \otimes \bar \beta}) - \log_p(u_{\alpha \otimes \bar \beta})\log_p(v_1)}{\log_p(u_{\alpha \otimes \bar \beta})} \, \Big)^2  \pmod{L^{\times}}.
$$
In light of the general properties satisfied by Beilinson--Flach elements established in \cite{KLZ}, this allows us to prove in \S \ref{proof-main} that the following equality holds in $\mathcal O_H[1/p]^{\times}[\mathrm{ad}^0(g)] \otimes L_p$:
$$
\kappa'(g_{\alpha},g_{1/\beta}^*) =  \frac{ \log_p(u_1)\log_p(v_{\alpha \otimes \bar \beta}) - \log_p(u_{\alpha \otimes \bar \beta})\log_p(v_1)}{\log_p(u_{\alpha \otimes \bar \beta})^2} \times  \Big(  \log_p(v_{\alpha \otimes \bar \beta}) u - \log_p(u_{\alpha \otimes \bar \beta})v \Big)
$$
up to a factor in $L^\times$, as claimed.
\vspace{0.2cm}
\noindent

We close the paper in  \S \ref{sec:DD} by pointing out to the connection between our derived Beilinson--Flach classes, Darmon--Dasgupta units and the Artin $p$-adic $L$-functions associated to the motives in play. We prove a factorization theorem in the CM case, and explain how our main results shed some light in the much more intriguing RM setting.

As a concluding remark, we hope the approach introduced in this paper may be adapted to prove other instances of variants of the Elliptic Stark Conjecture in presence of exceptional zeroes. We also hope that the derived global cohomology classes appearing in this work may also be useful for the on-going research project \cite{DHRV} of Darmon, Harris, Rotger and Venkatesh on derived Hecke  algebras in weight one.

\vspace{0.2cm}
\noindent

{\bf Acknowledgements.} It is a pleasure to thank H. Darmon and A. Lauder for many conversations around the theme of this article along the years; although  \cite{DLR1} was written before than  \cite{DLR2}, it was actually the numerical computations pertaining to the latter that led the authors to the idea that the $p$-adic iterated integral considered in loc.\,cit.\,ought to be non-trivial in scenarios of rank two. More specifically, \cite[Example 6.1]{DLR2} was the first ever numerical verification of the conjectures in a ``genuine rank two" case (as opposed to scenarios where the order of vanishing is $2$ because the $L$-function factors as the product of two $L$-functions of rank $1$). Note that this example falls within our results, so this is now a theorem.

We  heartily thank D. Loeffler for his useful and encouraging comments and explanations on his joint papers with S. Zerbes  \cite{LZ2}, which was also one of the initial sources of motivation for the writing of this note, as it points out to the interesting phenomenon of exceptional vanishing of classical specializations of the Euler system of Beilinson--Flach elements.  We also thank D. Benois for his interest and comments on an earlier draft of this note. The first author also wishes to thank F. Castell\`a and C. Skinner for their hospitality at Princeton University, where part of this research was conducted. We would like to thank the anonymous referee for a very careful reading of the manuscript, whose comments notably contributed to improve the exposition and correct a few number of inaccuracies in an earlier version of the manuscript.

Both authors were supported by Grant MTM2015-63829-P. This project has received funding from the European Research Council (ERC) under the European Union's Horizon 2020 research and innovation programme (grant agreement No. 682152). The first author has also received financial support through ``la Caixa" Fellowship Grant for Doctoral Studies (grant LCF/BQ/ES17/11600010). The second author gratefully acknowledges Icrea for financial support through an Icrea Academia award.

\section{Preliminary concepts}

\subsection{Hida families}\label{Hidafam}
Fix an algebraic closure $\bar{\mathbb Q}$ of $\mathbb Q$. For a number field $K$, let $G_K:=\Gal(\bar{\mathbb Q}/K)$ denote its absolute Galois group. Fix a prime $p$ and an embedding $\bar{\mathbb Q} \hookrightarrow \bar{\mathbb Q}_p$, and let $\ord_p$ denote the resulting $p$-adic valuation on $\bar{\mathbb Q}^{\times}$, normalized in such a way that $\ord_p(p)=1$.

Let $g$ be a newform of weight $k \geq 1$, level $N$ and character $\chi$, with Fourier coefficients in a finite extension $L$ of $\mathbb Q$. Label the roots of the $p$-th Hecke polynomial of $g$ as $\alpha_{g}, \beta_{g}$  with $\ord_p(\alpha_{g}) \leq \ord_p(\beta_g)$. By enlarging $L$ if necessary, we shall assume throughout that $L$ contains $\alpha_g, \beta_g$, the $N$-th roots of unity and the pseudo-eigenvalue $\lambda_N(g)$ with respect to the Atkin-Lehner operator $W_N$ (cf.\,\cite{AL}).

Let $L_p$ denote the completion of $L$ in $\bar\Q_p$ and let $V_g$ denote the two-dimensional representation of $G_\Q$ with coefficients in $L_p$ associated to $g$ as defined e.g.\,in \cite[\S 2.8]{KLZ}, where it is denoted $M_{L_p}(g)^*$. If $g$ is ordinary at $p$, there is an exact sequence of $G_{\mathbb Q_p}$-modules
\begin{equation}\label{filtration-classical}
 0 \rightarrow V_g^+ \rightarrow V_g \rightarrow V_g^- \rightarrow 0, \qquad V_g^+ \simeq L_p(\varepsilon_{\cyc}^{k-1}\chi \psi_g^{-1}), \quad V_g^- \simeq L_p(\psi_g),
\end{equation}
where $\psi_g$ is the unramified Galois character of $G_{\mathbb Q_p}$ sending $\Fr_p$ to $\alpha_g$.

The formal spectrum $\mathcal W = \Spf(\Lambda)$ of the Iwasawa algebra $\Lambda=\mathbb Z_p[[\mathbb Z^\times_p]]$ is called the weight space attached to $\Lambda$, and their $A$-valued points over a $p$-adic ring $A$ are given by \[ \mathcal W(A)=\Hom_{\alg}(\Lambda,A)=\Hom_{\grp}(\mathbb Z^\times_p, A^{\times}). \]

Weight space is equipped with a distinguished class of {\it arithmetic points} $\nu_{s,\varepsilon}$ indexed by integers $s \in \Z$ and Dirichlet characters $\varepsilon:(\Z/p^r\Z)^\times \rightarrow  \bar \Q^{\times}$ of $p$-power conductor. The point $\nu_{s,\varepsilon} \in \mathcal W$ is defined by the rule \[ \nu_{s,\varepsilon}(n)=\varepsilon(n)n^s. \]
We let $\cW^{\cl}$ denote the subset of $\cW$ formed by such arithmetic points.
When $\varepsilon=1$ is the trivial character, we denote the point $\nu_{s,1}$ simply as $\nu_s$ or even $s$ by a slight abuse of notation.

If $\tilde\Lambda$ is a finite flat algebra over $\Lambda$, there is a natural finite map $\tilde\cW := \mathrm{Spf}(\tilde\cW)\stackrel{\mathrm{w}}{\lra} \cW$, and we say that a point $x\in \tilde\cW$ is arithmetic of weight $s$ and character $\varepsilon$ if $\mathrm{w}(x) = \nu_{s,\varepsilon}$. As in the introduction, let $\varepsilon_{\cyc}$ denote the $p$-adic cyclotomic character and $\underline{\varepsilon}_{\cyc}$ be the composition of $\varepsilon_{\cyc}$ with the natural inclusion $\Z_p^\times \subset \Lambda^{\times}$ taking $z$ to the group-like element $[z]$ in $ \Lambda^{\times}$.

Let $N \geq 1$ be an integer not divisible by $p$, and let $\chi: (\mathbb Z/N \mathbb Z)^{\times} \rightarrow \mathbb C_p^{\times}$ be a Dirichlet character.
Let $\hg$ be a Hida family of tame level $N$ and tame character $\chi$ as defined and normalized in \cite[\S 7.2]{KLZ}. Let $\Lambda_{\hg}$ the associated Iwasawa algebra (cf.\,Def.\,7.2.5 of loc.\,cit.), which is finite and flat over $\Lambda$. As in \cite[\S 7.3]{KLZ}, we may specialize $\hg$ at any arithmetic point $x \in \mathcal W_{{\bf g}}=\Spf(\Lambda_{{\bf g}})$ of weight  $k \geq 2$ and character $\varepsilon: (\Z/p^r\Z)^\times \lra L_p^\times$ and obtain
 a classical $p$-ordinary eigenform $g_x$ in the space $S_k(Np^r,\chi \varepsilon)$ of cusp forms of weight $k$, level $Np^r$ and nebentype $\chi \varepsilon$.


\begin{defi}
Let $x \in \mathcal W_{{\bf g}}$ be an arithmetic point of weight $k\geq 1$ and character $\varepsilon$. We say $x$ is crystalline if $\varepsilon=1$  and there exists an eigenform $g_x^\circ$ of level $N$ such that $g_x$ is the ordinary $p$-stabilization of $g_x^\circ$ (which given the previous condition, is automatic if $k>2$, but not necessarily if $k\leq 2$). We denote by $\mathcal W_{{\bf g}}^{\circ}$ the set of crystalline arithmetic points of $\mathcal W_{{\bf g}}$.
\end{defi}

As shown by Wiles \cite{Wi}, \cite[\S 7.2]{KLZ} and already recalled in the introduction, a Hida family $\hg$ as above comes equipped with a free $\Lambda_{{\bf g}}[G_\Q]$-module $\mathbb V_{{\bf g}}$ of rank two, yielding a Galois representation $\varrho_{\hg}: G_{\Q} \lra \GL_2(\Lambda_{\hg})$, all whose classical specializations recover the $p$-adic Galois representation associated to the classical specialization of $\hg$. We assume throughout that the mod $p$ residual representation $\bar\varrho_{\hg}$ associated to any of its classical specializations is irreducible, and that the semi-simplification of $\bar\varrho_{|G_{\Q_p}}$ is non-scalar, as in hypotheses (H1-H2) in the introduction.

Similarly as in \eqref{filtration-classical}, the restriction to $G_{\mathbb Q_p}$ of $\mathbb V_{\hg}$ admits a filtration
\begin{equation}\label{primfil}
0 \rightarrow \mathbb V_{{\bf g}}^+ \rightarrow \mathbb V_{{\bf g}} \rightarrow \mathbb V_{{\bf g}}^- \rightarrow 0,
\end{equation}
where $\mathbb V_{{\bf g}}^+$ and $\mathbb V_{{\bf g}}^-$ are flat $\Lambda_{{\bf g}}[G_{\mathbb Q_p}]$-modules, free of rank one over $\Lambda_{{\bf g}}$. If we let $\psi_{{\bf g}}$ denote the unramified character of $G_{\Q_p}$ taking the arithmetic Frobenius element $\Fr_p$ to $a_p({\bf g})$, then
\begin{equation}\label{vaps}
\mathbb V_{{\bf g}}^+ \simeq \Lambda_{{\bf g}}(\psi_{{\bf g}}^{-1} \chi \varepsilon_{\cyc}^{-1} \underline{\varepsilon}_{\cyc}), \qquad \mathbb V_{{\bf g}}^- \simeq \Lambda_{{\bf g}}(\psi_{{\bf g}}).
\end{equation}

Let $\widehat{\mathbb Z_p^{\ur}}$ be the ring of integers of the completion of the maximal unramified extension of $\mathbb Q_p$. Given a finite-dimensional $G_{\Q_p}$-module $V$ with coefficients over a finite extension $L_p$ of $\mathbb Q_p$, the de Rham Dieudonn\'e module associated to $V$ is defined as \[ D(V)=(V \otimes_{\Q_p} B_{\dR})^{G_{\mathbb Q_p}}, \] where $B_{\dR}$ is Fontaine's field of de Rham periods. If $V$ is unramified there is a further canonical isomorphism \[ D(V) \simeq \Big(V_{\mathrm{int}} \hat \otimes_{\mathbb Z_p} \widehat{\mathbb Z_p^{\ur}} \Big)^{\Fr_p=1}[\frac{1}{p}], \]
where $V_{\mathrm{int}}$ is an integral lattice in $V$.
Similarly, if $\mathbb V$ is a free module over an Iwasawa algebra equipped with an unramified action of $G_{\mathbb Q_p}$, we may define
\begin{equation}\label{dieu}
\mathbb D(\mathbb V):= \Big( \mathbb V \hat \otimes_{\mathbb Z_p} \widehat{\mathbb Z_p^{\ur}} \Big)^{\Fr_p=1}.
\end{equation}

Given a newform $g\in S_k(N,\chi)$ as at the beginning of this section, $D(V_g)$ is an $L_p$-filtered vector space of rank $2$. Set $g^* = g \otimes \bar \chi \in S_k(N, \bar \chi)$. Poincar\'e duality induces a perfect pairing
\begin{equation}\label{Poincare}
\langle \, , \, \rangle: D(V_g(-1)) \times D(V_{g^*}) \rightarrow L_p.
\end{equation}

If $g$ is ordinary at $p$, then \eqref{filtration-classical} gives rise to an exact sequence of Dieudonn\'e modules \[ 0 \rightarrow D(V_g^+) \xrightarrow{i} D(V_g) \xrightarrow{\pi} D(V_g^-) \rightarrow 0 \]
where $D(V_g^+)$ and $D(V_g^-)$ have rank $1$ over $L_p$.

If $k \geq 2$, Faltings' comparison theorem allows to associate to $g$ a regular differential form $\omega_g \in \Fil(D(V_g))$, which induces a linear form \[ \omega_g: D(V^+_{g^*}) \rightarrow L_p, \quad \eta \mapsto \langle \omega_g, \eta \rangle. \]
There is also the differential form $\eta_g$, characterized by the properties that it spans the line $D(V_g^+)$ and $\langle \eta_g,\omega_{g^*}\rangle = 1$. It again gives rise to a linear functional \[ \eta_g: D(V_{g^*}^-) \rightarrow L_p, \quad \omega \mapsto \langle \pi^{-1}(\omega),\eta_g \rangle. \]

As shown in \cite{Oh} and \cite{KLZ}, the differential forms (or linear functionals) $\omega_g$ and $\eta_g$ vary in families. In order to recall this more precisely, let ${\bf g}$ be a Hida family of tame level $N$ and tame character $\chi$ as above. Set $\hg^* = \hg \otimes \bar\chi$. Let $\mathcal Q_{{\bf g}}$ denote the fraction field of $\Lambda_{{\bf g}}$, and set $\mathbb U_{{\bf g}}^+ := \mathbb V_{{\bf g}}^+(\chi^{-1}\varepsilon_{\cyc} \underline{\varepsilon}_{\cyc}^{-1})$; denote by $U_{g_y}^+$ its specialization at a point $y \in \mathcal W_{{\bf g}}^{\circ}$.
By \cite[Proposition 10.1.1]{KLZ}, there exist
\begin{itemize}
\item A homomorphism of $\Lambda_{{\bf g}}$-modules

\begin{equation}\label{omegaG}
 \langle \, , \omega_{{\bf g}} \rangle: \mathbb D(\mathbb U_{{\bf g*}}^+) \rightarrow \Lambda_{{\bf g}}
\end{equation}
  such that for every $y \in \mathcal W_{{\bf g}}^{\circ}$, the specialization of $\omega_{{\bf g}}$ at $y$ is the linear form \[ y \circ \langle \, , \omega_{{\bf g}} \rangle = \langle \, , \mathrm{Pr}^{\alpha *}(\omega_{g^\circ_y}) \rangle: D(U_{g_y^*}^+) \rightarrow L_p \]
where $\mathrm{Pr}^{\alpha *}$ is the $p$-stabilization pull-back isomorphism defined in loc.\,cit.. Note that this makes sense: since $L_p$ is assumed to contain the $N$-th roots of unity, $D(U_{g_y^*}^+)$ and $D(V_{g_y^*}^+)$ are the same module up to a shift in their filtration.

\item  A homomorphism of $\Lambda_{{\bf g}}$-modules
\begin{equation}\label{etaG}
 \langle \, , \eta_{{\bf g}} \rangle: \mathbb D(\mathbb V_{{\bf g*}}^-) \rightarrow \mathcal Q_{{\bf g}}
\end{equation}
such that for every $y \in \mathcal W_{{\bf g}}^{\circ}$ we have
\[ y \circ \langle \, , \eta_{{\bf g}} \rangle = \frac{\mathrm{Pr}^{\alpha *}(\eta_{g_y^{\circ}})}{\lambda_N(g^\circ_y) \mathcal E_0(g_y^{\circ}) \mathcal E_1(g_y^{\circ})}: D(V_{g_y^*}^-) \rightarrow L_p, \] where   $\lambda_N(g^\circ_y)$ stands for the pseudo-eigenvalue of $g_y^\circ$, and \[ \mathcal E_0(g_y^{\circ})=1-\chi^{-1}(p)\beta_{g^\circ_y}^2p^{1-k}, \quad \mathcal E_1(g_y^{\circ})=1-\chi(p)\alpha_{g^\circ_y}^{-2}p^{k-2}. \]
\end{itemize}

\subsection{Hida--Rankin's three-variable $p$-adic $L$-function}\label{HidaRankin}

Let ${\bf g} \in \Lambda_{\hg}[[q]]$, ${\bf h}\in \Lambda_{\hh}[[q]]$ be a pair of Hida families of tame level $N$ and tame characters $\chi_g$ and $\chi_h$ respectively. As in the introduction, set $\Lambda_{{\bf g h}}:=\Lambda_{{\bf g}} \hat \otimes_{\mathbb Z_p} \Lambda_{{\bf h}} \hat \otimes_{\mathbb Z_p} \Lambda$ and $\mathcal W_{{\bf gh}}:= \Spf(\Lambda_{{\bf gh}})$.  Let $\mathcal Q_{{\bf gh}}$ denote the fraction field of $\Lambda_{{\bf gh}}$, and set $\mathcal W_{{\bf gh}}^\circ:= \cW_\hg^\circ \times \cW_\hh^\circ \times \cW^\cl$.

\begin{defi} The {\em critical} range  is the set of points $(y,z,\sigma) \in \mathcal W_{{\bf gh}}^\circ$ of weights $(\ell,m,s)$ such that $\ell,m \geq 2$ and $m \leq s < \ell$.

The {\em geometric} range is defined to be the set of points $(y,z,\sigma) \in \mathcal W_{{\bf gh}}^\circ$ of weights $(\ell,m,s)$ such that $\ell,m \geq 2$ and $1 \leq s < \min(\ell,m)$.
\end{defi}

Hida constructed \cite{Hi2}, \cite{Hi3} a three-variable $p$-adic Rankin $L$-function $L_p({\bf g},{\bf h})$ on $\mathcal W_{{\bf gh}}$, interpolating the algebraic parts of the critical values $L(g^\circ_y,h^\circ_z,s)$ for every triple of classical points $(y,z,s)$ in $\mathcal W_{{\bf gh}}^{^\circ}$ lying in the {\em critical} range. More precisely, \cite[Theorem 5.1d]{Hi3} asserts the following.

\begin{theorem}(Hida)\label{Hida-3var}
There exists a unique element $L_p({\bf g},{\bf h}) \in \mathcal Q_{{\bf gh}}$ whose value at any $(y,z,s)\in\mathcal W_{{\bf gh}}^\circ$ of weights $(\ell,m,s)$ in the {\em critical} range is well-defined and equal to
$$
L_p({\bf g},{\bf h})(y,z,s) = \frac{C \cdot \mathcal E(y,z,s)}{(2\pi i)^{2s-m+1} \langle g_y^\circ,g_y^\circ \rangle}  \times L(g_y^\circ,h_z^\circ,s)
$$
where $C$ is a non-zero algebraic number in the finite extension $\mathbb Q(g_y^{\circ},h_z^{\circ})$ generated by the Fourier coefficients of $g_y^{\circ}$ and $h_z^{\circ}$, $\langle g_y^{\circ}, g_y^{\circ} \rangle$ is the Petersson norm as normalized in loc.\,cit., and
\begin{equation}\label{Euler}
 \mathcal E(y,z,s)= \Big(1-\frac{p^{s-1}}{\alpha_{g^\circ_y} \alpha_{h^\circ_z}} \Big) \Big(1-\frac{p^{s-1}}{\alpha_{g^\circ_y} \beta_{h^\circ_z}} \Big) \Big(1-\frac{\beta_{g^\circ_y} \alpha_{h^\circ_z}}{p^{s}}\Big) \Big(1-\frac{\beta_{g^\circ_y} \beta_{h^\circ_z}}{p^{s}}\Big).
\end{equation}
\end{theorem}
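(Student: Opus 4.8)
The plan is to follow Hida's Rankin--Selberg construction, deferring to \cite{Hi2,Hi3} for the delicate local computation at $p$ and recalling only the shape of the argument. The starting point is Shimura's integral representation of the Rankin--Selberg convolution: for classical newforms $g_y^\circ\in S_\ell(N,\chi_g)$ and $h_z^\circ\in S_m(N,\chi_h)$ with $\ell>m$ and an integer $s$ in the critical range $m\le s<\ell$, the value $L(g_y^\circ,h_z^\circ,s)$ equals, up to an explicit elementary factor and the archimedean period $(2\pi i)^{2s-m+1}$, the ratio
\[
\frac{\langle g_y^\circ,\ h_z^\circ\cdot \cE_{\ell-m}^{(s)}\rangle}{\langle g_y^\circ,g_y^\circ\rangle},
\]
where $\cE_{\ell-m}^{(s)}$ is a nearly holomorphic Eisenstein series of weight $\ell-m$ (whose holomorphic projection against a cusp form is absorbed into the Petersson pairing). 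The three ingredients $h_z^\circ$, the Eisenstein series, and the normalized functional $\langle g_y^\circ,-\rangle/\langle g_y^\circ,g_y^\circ\rangle$ are then interpolated $p$-adically and recombined into an element of $\mathcal Q_{\hg\hh}$.

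Concretely, I would first assemble a $\Lambda$-adic Eisenstein family $\cE$ over the relevant weight space (governing both the weight $\ell-m$ and the evaluation point $s$) from Katz's $p$-adic Eisenstein measure, so that $\hh\cdot\cE$ is a $\Lambda$-adic family of modular forms over $\Lambda_{\hg\hh}$; applying Hida's ordinary idempotent $e_{\mathrm{ord}}$, which preserves $\Lambda$-adic families and has controlled denominators, produces an ordinary $\Lambda$-adic form $e_{\mathrm{ord}}(\hh\cdot\cE)$. Next I would invoke Hida's duality between the ordinary Hecke algebra and ordinary $\Lambda$-adic forms to obtain a $\Lambda_{\hg}$-linear functional $\ell_{\hg}$ (essentially division by the congruence module of $\hg$) whose specialization at $y\in\cW_{\hg}^\circ$ recovers $\langle g_y^\circ,-\rangle/\langle g_y^\circ,g_y^\circ\rangle$ up to the pseudo-eigenvalue $\lambda_N(g_y^\circ)$ and the transition factors relating $g_y^\circ$ to its ordinary $p$-stabilization $g_y$. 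One then sets, with an appropriate algebraic normalization constant $C$,
\[
L_p(\hg,\hh) := \ell_{\hg}\bigl(e_{\mathrm{ord}}(\hh\cdot\cE)\bigr)\ \in\ \mathcal Q_{\hg\hh}.
\]

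It remains to identify the specialization at a critical triple $(y,z,s)$. Unwinding the definitions and applying Shimura's formula recovers $L(g_y^\circ,h_z^\circ,s)$ up to the factor $(2\pi i)^{2s-m+1}\langle g_y^\circ,g_y^\circ\rangle$ and a finite product of local factors at $p$, which one checks is exactly $\cE(y,z,s)$ of \eqref{Euler}: two of the four factors come from replacing the newform $g_y^\circ$ by its ordinary $p$-stabilization $g_y$ (equivalently, from the effect of $e_{\mathrm{ord}}$ on the non-unit root $\beta_{g_y^\circ}$), and the other two from the interaction at $p$ of the $p$-stabilized $h_z$ with the Eisenstein series in the local zeta integral; together they form the partial Euler factor at $p$ of the Rankin--Selberg $L$-function written in terms of $\alpha_{g_y^\circ},\beta_{g_y^\circ},\alpha_{h_z^\circ},\beta_{h_z^\circ}$ and the cyclotomic variable. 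Finally, uniqueness follows because the critical range is Zariski dense in $\cW_{\hg\hh}$ and $\mathcal Q_{\hg\hh}$ is the fraction field of a domain: a rational function regular and vanishing on a Zariski-dense set of points is zero, so the interpolation property pins down $L_p(\hg,\hh)$. The hard part will be precisely the local computation at $p$ yielding $\cE(y,z,s)$ with the correct four Euler factors, together with the bookkeeping that guarantees $e_{\mathrm{ord}}(\hh\cdot\cE)$ lands in the ordinary $\Lambda$-adic space with denominators no worse than those of $\mathcal Q_{\hg\hh}$.
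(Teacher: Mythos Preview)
The paper does not actually prove this statement: Theorem \ref{Hida-3var} is quoted from Hida's work, with the sentence ``\cite[Theorem 5.1d]{Hi3} asserts the following'' immediately preceding it, and no proof environment follows. Your sketch is a faithful outline of Hida's original Rankin--Selberg construction (the $\Lambda$-adic Eisenstein family, the ordinary projector, the pairing against $\hg$ via the congruence module), so there is nothing to compare against in the paper itself; if anything, you have supplied more detail than the authors chose to include.
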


Let $g$ and $h$ be classical specializations of the families $\hg$ and $\hh$ at some point $(y_0,z_0)\in \cW_{\hg}^\cl \times \cW_{\hh}^\cl$ of weight $(\ell,m)$.
We denote by $L_p(g, h,s)$ the restriction of $L_p({\bf g},{\bf h})(y,z,s)$ to the line $(y_0,z_0,s)$. As quoted e.g. in \cite[\S9.2]{Das3}, this $p$-adic $L$-function satisfies the functional equation
\begin{equation}\label{functional}
 L_p(g, h, \ell+m-1-s) = \epsilon(g,h,s) L_p(g^*, h^*, s)
\end{equation}
where $\epsilon(g,h,\psi,s) = A \cdot B^s$, with $A\in \mathbb Q(g_y,h_z)^\times $ and $B\in \Q^\times$.

\subsection{Improved $p$-adic $L$-functions}

As before, let ${\bf g} \in \Lambda_{\hg}[[q]]$ be a Hida family of tame level $N$ and tame character $\chi=\chi_g$, and set ${\bf h} = {\bf g^*} := {\bf g} \otimes \chi^{-1}$. As in the introduction, define the surface
\begin{equation}\label{S}
\mathcal S_{\mathrm{Hida}} := \mathcal S_{\ell,m,m} := \{ (y,z,\sigma) \in \mathcal W_{{\bf g}} \times \mathcal W_{{\bf g}} \times \mathcal W: \, \mathrm{w}(z)=\sigma \}.
\end{equation}
Note that this is a sub-variety of $ \mathcal W_{\hg \hg^*}$ all whose crystalline arithmetic points have weights $(\ell,m,m)$ for some $\ell,m\geq 1$.

The restriction to $\mathcal S_{\mathrm{Hida}}$ of the {\em second multiplier} in the Euler-like factor \eqref{Euler} appearing in the interpolation formula for Hida--Rankin's $p$-adic $L$-function is \[ 1-\frac{p^{s-1}}{\alpha_{g_y^{\circ}} \beta_{h_z^{\circ}}} = 1- \frac{\alpha_{g_z^{\circ}}}{\alpha_{g_y^{\circ}}}. \] This expression interpolates to an Iwasawa function in $\Lambda_{{\bf g}} \times \Lambda_{{\bf g}}$, which by abuse of notation we continue to denote with the same symbol.  One naturally expects $1- \frac{\alpha_{g_z^{\circ}}}{\alpha_{g_y^{\circ}}}$ should divide the two-variable $p$-adic $L$-function $L_p({\bf g},{\bf g^*})(y,z,s)$, where $(y,z)$ vary in $\mathcal W_{{\bf g}} \times \mathcal W_{{\bf g}}$ and $s=\mathrm{w}(z)$ is determined by the weight of $z$. Hida proved in \cite{Hi3} the following stronger statement:

\begin{theorem}[Hida]\label{improved}
Let $\hat L_p({\bf g},{\bf g^*})$ be the unique element in the fraction field of $\Lambda_{{\bf g}} \hat \otimes \Lambda_{{\bf g}}$ such that \[ L_p({\bf g},{\bf g^*})(y,z,\mathrm{w}(z)) = \Big(1-\frac{\alpha_{g_z^{\circ}}}{\alpha_{g_y^{\circ}}} \Big) \cdot \hat L_p({\bf g},{\bf g^*})(y,z). \] Then, fixing $y_0 \in \mathcal W_{{\bf g}}^\circ$, the one-variable meromorphic function \[ \hat L_p({\bf g},{\bf g^*})(y_0,z) \] on $\mathcal W_{{\bf g}}$ has a simple pole at $z=y_0$ whose residue is a non-zero explicit rational number.
\end{theorem}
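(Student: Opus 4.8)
The existence and uniqueness of $\hat L_p({\bf g},{\bf g^*})$ are automatic: $1-\alpha_{g_z}/\alpha_{g_y}$ is not identically zero in $\Lambda_{{\bf g}}\hat\otimes\Lambda_{{\bf g}}$ (it vanishes only along the diagonal $z=y$), hence is invertible in the fraction field, and the entire content is the behaviour of $z\mapsto\hat L_p({\bf g},{\bf g^*})(y_0,z)$ near $z=y_0$. Write $\ell:=\mathrm{w}(y_0)$. The first step is to combine Theorem \ref{Hida-3var} with the defining factorization: for every crystalline $z$ of weight $m$ with $2\le m<\ell$ one gets
\begin{equation*}
\hat L_p({\bf g},{\bf g^*})(y_0,z)=\frac{C\cdot\widetilde{\mathcal E}(y_0,z)}{(2\pi i)^{m+1}\,\langle g_{y_0}^\circ,g_{y_0}^\circ\rangle}\cdot L\bigl(g_{y_0}^\circ,(g_z^\circ)^{*},m\bigr),
\end{equation*}
where $\widetilde{\mathcal E}(y_0,z)=\mathcal E(y_0,z,m)/(1-\alpha_{g_z}/\alpha_{g_{y_0}})$ is the product of the three surviving Euler multipliers of \eqref{Euler}, each of which is an Iwasawa function on $\mathcal W_{{\bf g}}$ that is holomorphic at $z=y_0$. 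This reduces the statement to the $p$-adic interpolation of the near-central Rankin--Selberg values $L(g_{y_0}^\circ,(g_z^\circ)^{*},m)$, divided by the Petersson period $\langle g_{y_0}^\circ,g_{y_0}^\circ\rangle$ appearing in the normalization, as $z\to y_0$ — equivalently as $m\to\ell$ and $g_z^\circ\to g_{y_0}^\circ$, a regime that lies outside the critical range, so the value at $y_0$ must be reached by $p$-adic continuation.

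To pin down the pole I would go back to Hida's Rankin--Selberg construction of $L_p({\bf g},{\bf g^*})$: restricted to $\{y_0\}\times\mathcal W_{{\bf g}}$ it is, up to the period $\langle g_{y_0}^\circ,g_{y_0}^\circ\rangle$ and the holomorphic factor $\widetilde{\mathcal E}(y_0,\cdot)$, obtained by applying Hida's ordinary projector to the product of the $p$-depletion of $(g_z^\circ)^{*}$ with an Eisenstein family $\mathcal E_z$ adapted to $\mathcal S_{\mathrm{Hida}}$ and pairing it against $g_{y_0}^\circ$; the feature that ``improves'' the $L$-function and removes the multiplier $1-\alpha_{g_z}/\alpha_{g_{y_0}}$ is precisely that, on $\mathcal S_{\mathrm{Hida}}$, this Eisenstein family need not be $p$-stabilized. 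Its weight is $\ell-\mathrm{w}(z)$, so as $z\to y_0$ it degenerates to weight $0$, i.e.\ to the constant given by the relevant branch of the Kubota--Leopoldt $p$-adic $L$-function — and this is exactly the point at which that function has a simple pole (the $p$-adic avatar of the pole at $s=\ell$ of $L(g_{y_0}^\circ\times\overline{g_{y_0}^\circ},s)$ carried by its $\zeta$-factor). Since $\mathrm{w}$ is \'etale at $y_0$ under our running hypotheses, $\mathrm{w}(z)-\ell$ is a uniformizer there, and hence the pole of $\hat L_p({\bf g},{\bf g^*})(y_0,z)$ at $z=y_0$ is simple.

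The residue is then computed by unwinding the previous step: its polar part is the residue $1-p^{-1}$ of the Kubota--Leopoldt $p$-adic $L$-function, multiplied by the various algebraic Euler factors arising from $\widetilde{\mathcal E}(y_0,y_0)$ and from the relation between $L(g_{y_0}^\circ,(g_z^\circ)^{*},m)$ and the Rankin--Selberg integral — which conspire to cancel — together with an elementary rational constant (powers of $p$ and the level $N$), the cancellation of the Petersson period in the normalization against the one produced by the classical residue formula, and the absorption of the archimedean $(2\pi i)$-factors into the normalization $C/(2\pi i)^{m+1}$. The upshot is that the residue is a non-zero element of $\Q^{\times}$. I expect the main obstacle to be the middle step: identifying precisely the Eisenstein family on $\mathcal S_{\mathrm{Hida}}$ and running the constant-term / ordinary-projection bookkeeping so that the classical Rankin--Selberg pole is transported correctly and so that all the irrational Euler and period factors genuinely cancel, leaving the residue honestly rational — this is the technical heart of \cite{Hi3}, reworked in the precise form needed here in \cite{Das3}, and getting every normalization right is the delicate part.
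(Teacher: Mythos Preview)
The paper does not supply a proof of this theorem: it is quoted as Hida's result from \cite{Hi3} (with the precise normalizations reworked in \cite{Das3}), and the text moves directly from the statement to the definition of the $\cL$-invariant in \eqref{citro}. So there is no ``paper's own proof'' to compare against.

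That said, your sketch captures the mechanism behind Hida's argument: on the surface $\cS_{\mathrm{Hida}}$ the Eisenstein datum in the Rankin--Selberg construction can be taken un-$p$-stabilized, which is what removes the offending Euler multiplier and produces the improved $\hat L_p$; the simple pole along the diagonal then comes from the constant term of the Eisenstein family as its weight drops to $0$, i.e.\ from the pole of the Kubota--Leopoldt $p$-adic zeta function, and the residue computation is a matter of tracking the cancellation of periods against the classical Rankin--Selberg residue formula. One caution: your opening reduction fixes $y_0$ of weight $\ell$ and appeals to crystalline $z$ with $2\le m<\ell$, but the application in this paper is precisely to $\ell=1$, where that range is empty. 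Hida's construction avoids this by building $\hat L_p$ directly as a two-variable Iwasawa function (not by dividing after the fact), so the pole is visible in the family before specializing $y_0$; your sketch implicitly relies on this when you invoke \'etaleness of $\mathrm{w}$ at $y_0$, but it would be cleaner to say so explicitly rather than to route through an interpolation range that may be vacuous at the point of interest.
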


Associated to the adjoint representation attached to any classical specialization $g=g_{y_0}$ of the Hida family $\hg$ at some arithmetic point $y_0\in \cW_{\hg}^{\cl}$, Hida defined an analytic $\cL$-invariant, which can be recast in several equivalent ways (cf.\,the works of Hida, Harron, Citro and Dasgupta (cf.\,\cite{Hi-ad}, \cite{Ci}, \cite{Das3}). We may define it for instance as:
\begin{equation}\label{citro}
\mathcal L(\ad^0(g_{y_0})) :=  \frac{-\alpha'_{\hg}(y_0)}{\alpha_{\hg}(y_0)},
\end{equation}
where recall $\alpha_{\hg} = a_p(\hg) \in \Lambda_{\hg}$ is the Iwasawa function given by the eigenvalue of the Hecke operator $U_p$ acting on $\hg$, and $\alpha'_{\hg}$ is its derivative.

Let $L_p'(\ad^0(g_{y_0}),s)$ denote Hida-Schmidt's $p$-adic $L$-function associated to the adjoint of the ordinary eigenform $g_{y_0}$ (cf.\,\cite{Sc}, \cite{Hi-ad}). The argument below is mainly due to Citro and Dasgupta, but since in loc.\,cit.\,they often assume that $\ell \geq 2$, we include it  in order to ensure that it holds as well at weight $1$, which is the case we mostly focus on. The main  point is that the objects in play all vary in Hida families.

\begin{propo}\label{impad}
For a crystalline classical point $y_0\in \cW_{\hg}^\circ$ of weight $\ell \geq 1$, we have
\[  \mathcal L(\ad^0(g_{y_0})) = L_p({\bf g},{\bf g^*})(y_0,y_0,\ell) = L_p'(\ad^0(g_{y_0}),\ell), \]
up to a non-zero rational constant.
\end{propo}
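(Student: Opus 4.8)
The plan is to compare three $p$-adic $L$-functions by exhibiting each as a specialization (or derivative along a specialization) of a single two-variable object interpolated over a Hida family. The first equality, $\mathcal L(\ad^0(g_{y_0})) = L_p({\bf g},{\bf g}^*)(y_0,y_0,\ell)$, should follow from Theorem \ref{improved}. Indeed, by definition $L_p({\bf g},{\bf g}^*)(y,z,\mathrm{w}(z)) = (1-\alpha_{g_z^\circ}/\alpha_{g_y^\circ})\cdot \hat L_p({\bf g},{\bf g}^*)(y,z)$, and along the diagonal $z\to y_0$, $y = y_0$, the factor $1-\alpha_{g_z^\circ}/\alpha_{g_{y_0}^\circ}$ vanishes to first order. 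First I would compute the derivative of this factor in the variable $z$ at $z=y_0$: identifying the local coordinate on $\mathcal W_{\hg}$ near $y_0$ and using that $\alpha_{g_z^\circ}$ is (a branch of) $\alpha_{\hg}$, the derivative of $1-\alpha_{g_z^\circ}/\alpha_{g_{y_0}^\circ}$ at $z=y_0$ is $-\alpha'_{\hg}(y_0)/\alpha_{\hg}(y_0)$, which is precisely $\mathcal L(\ad^0(g_{y_0}))$ by \eqref{citro}. On the other hand, by Theorem \ref{improved} the function $\hat L_p({\bf g},{\bf g}^*)(y_0,z)$ has a simple pole at $z=y_0$ with explicit nonzero rational residue, so the product $(1-\alpha_{g_z^\circ}/\alpha_{g_{y_0}^\circ})\cdot \hat L_p({\bf g},{\bf g}^*)(y_0,z)$ extends to a value at $z=y_0$ equal to (derivative of the first factor) $\times$ (residue of the second), i.e. to $\mathcal L(\ad^0(g_{y_0}))$ up to a nonzero rational constant. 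This gives the first equality.

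For the second equality, $L_p({\bf g},{\bf g}^*)(y_0,y_0,\ell) = L_p'(\ad^0(g_{y_0}),\ell)$, the point is the classical factorization of Rankin $L$-functions: $L(g,g^*,s) = L(\ad^0(g),s)\cdot\zeta(s-\ell+1)$ (up to Euler factors at the bad primes and normalization), reflecting $V_g\otimes V_{g^*} = \ad(g)(\text{twist}) = (\ad^0(g)\oplus L)(\text{twist})$. The corresponding statement for the $p$-adic $L$-functions of Hida and of Schmidt/Hida–Schmidt, including the comparison of Euler/interpolation factors and periods, is exactly what Citro and Dasgupta establish in \cite{Ci}, \cite{Das3} — there one has a factorization of the two-variable object $L_p({\bf g},{\bf g}^*)$ into the adjoint $p$-adic $L$-function $L_p(\ad^0(\hg))$ times a Kubota–Leopoldt factor. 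I would invoke that factorization, restrict it to the diagonal point, and observe that the zeta factor contributes a nonzero rational constant (the Kubota–Leopoldt $L$-function and its interpolation factors being units at the relevant point, since $s=\ell$ is not a trivial zero here); this leaves $L_p({\bf g},{\bf g}^*)(y_0,y_0,\ell)$ equal, up to a nonzero rational constant, to the value of the adjoint $p$-adic $L$-function, which is $L_p'(\ad^0(g_{y_0}),\ell)$ by the normalization recalled from \cite{Sc}, \cite{Hi-ad}.

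The main obstacle — and the reason the proposition is stated as something that ``needs adapting'' rather than merely quoted — is that the arguments of Citro and Dasgupta are written under the standing hypothesis $\ell\geq 2$, whereas here the case of interest is $\ell = 1$. The fix is precisely the remark made in the text before the statement: every object involved (the Hida family $\hg$, its Galois representation, the two-variable $p$-adic $L$-function $L_p({\bf g},{\bf g}^*)$, the improved $L$-function $\hat L_p$, and the adjoint $p$-adic $L$-function) is a rigid-analytic function over $\mathcal W_{\hg}$ whose formation is compatible with specialization, so the factorization identity, being an identity of meromorphic functions on $\mathcal W_{\hg}\times\mathcal W_{\hg}$ (or on the relevant surface), holds on a Zariski-dense set of classical points of weight $\geq 2$ and therefore holds identically — in particular at the weight-one crystalline point $y_0$. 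So the technical work is: (i) checking that all four $p$-adic $L$-functions genuinely interpolate into two-variable (or surface-) rigid functions with no poles along the weight-one locus other than those forced by Theorem \ref{improved}, and (ii) verifying that the constants comparing interpolation factors and periods specialize correctly, i.e. remain nonzero rational, at $y_0$. Step (ii) is where one must be slightly careful, since periods and Euler factors can in principle degenerate at weight one; but the hypotheses (H1)–(H3) ensure $g_{y_0}$ is the unique form in its Hida family and $\ad^0(g)$ has no exceptional behaviour beyond the single trivial-zero factor already accounted for, so this does not happen.
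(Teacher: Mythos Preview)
Your treatment of the first equality is correct and matches the paper's argument: the vanishing factor $1-\alpha_{g_z^\circ}/\alpha_{g_{y_0}^\circ}$ contributes its derivative $-\alpha'_{\hg}(y_0)/\alpha_{\hg}(y_0) = \mathcal L(\ad^0(g_{y_0}))$, while $\hat L_p({\bf g},{\bf g}^*)(y_0,z)$ contributes its nonzero rational residue, and the product gives the claimed value.

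There is, however, a genuine gap in your argument for the second equality. You assert that ``the zeta factor contributes a nonzero rational constant'' and that the adjoint $p$-adic $L$-function's \emph{value} equals $L_p'(\ad^0(g_{y_0}),\ell)$ ``by normalization''. Both statements are wrong. Dasgupta's factorization reads
\[
L_p({\bf g},{\bf g^*})(y_0,y_0,s) = \zeta_p(s-\ell+1)\cdot L_p(\ad^0(g_{y_0}),s),
\]
and at $s=\ell$ the Kubota--Leopoldt factor is $\zeta_p(1)$, which has a \emph{simple pole} with nonzero rational residue --- it is not a unit. Simultaneously, $L_p(\ad^0(g_{y_0}),s)$ has a \emph{trivial zero} at $s=\ell$ (this is precisely the exceptional-zero phenomenon for the adjoint, and is the whole reason $\mathcal L$-invariants enter the picture). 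The symbol $L_p'$ denotes the honest derivative in $s$, not a renormalized value. The product of a simple pole and a simple zero gives
\[
L_p({\bf g},{\bf g^*})(y_0,y_0,\ell) = \bigl(\res_{s=1}\zeta_p\bigr)\cdot L_p'(\ad^0(g_{y_0}),\ell),
\]
which is the desired equality mod $\Q^\times$. Without recognizing this pole--zero cancellation, your argument would yield $L_p({\bf g},{\bf g^*})(y_0,y_0,\ell) = (\text{unit})\cdot 0 = 0$, which is not what is claimed.
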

\begin{proof}
The first equality follows from Theorem \ref{improved}, which amounts to say that \[ L_p({\bf g},{\bf g^*})(y_0,y_0,\ell) = \lim_{z \rightarrow y_0} \Big(1-\frac{\alpha_{g_z^{\circ}}}{\alpha_{g_{y_0}^{\circ}}} \Big) \cdot \hat L_p({\bf g},{\bf g^*})(y_0,z). \] Since $\Big(1-\frac{\alpha_{g_z}^{\circ}}{\alpha_{g_{y_0}^{\circ}}} \Big)$ vanishes at $z=y_0$ and $\hat L_p(\hg,\hg^*)(y_0,z)$ has a pole at $z=y_0$ given by a non-zero rational number, the value of the previous limit agrees, modulo $L^{\times}$, with the derivative of the first factor, i.e., \[ L_p({\bf g},{\bf g^*})(y_0,y_0,\ell) = \frac{-\alpha'_{\hg}(y_0)}{\alpha_{\hg}(y_0)}. \]

In addition, Dasgupta's factorization proved in \cite{Das3} asserts that $L_p({\bf g},{\bf g^*})(y_0,y_0,s) = \zeta_p(s-\ell+1) L_p(\ad^0(g_{y_0}),s)$. Here $\zeta_p(s)$ is the $p$-adic zeta function, which has a pole at $s=1$ with non-zero rational residue. The second factor vanishes at $s=\ell$ and it follows that $L_p({\bf g},{\bf g^*})(y_0,y_0,\ell) = L'_p(\ad^0(g_{y_0}),\ell) \, (\mathrm{mod} \,\Q^\times)$.
\end{proof}

\section{Derived Beilinson--Flach elements}\label{sec-derived}

\subsection{The three-variable Euler system of Kings, Lei, Loeffler and Zerbes}\label{memo}

Let $\hg$ and $\hh$ be a pair of $p$-adic cuspidal Hida families of tame conductor $N$ and tame nebentype $\chi_g$ and $\chi_h$ as in \S \ref{Hidafam}.
As in the Introduction, and keeping the notations of the previous section, define the $\Lambda_{{\bf gh}}$-module
\begin{equation}\label{Vgh}
\mathbb V_{{\bf gh}}:=\mathbb V_{{\bf g}} \hat \otimes_{\mathbb Z_p} \mathbb V_{{\bf h}} \hat \otimes_{\mathbb Z_p} \Lambda(\varepsilon_{\cyc} \underline{\varepsilon}_{\cyc}^{-1}).
\end{equation}

This $\Lambda$-adic Galois representation is characterized by the property that for any $(y,z,\sigma) \in \cW_{{\bf gh}}^\circ$ with $\mathrm{w}(\sigma) = \nu_s$ with $s\in \Z$, \eqref{Vgh} specializes to
$$
\mathbb V_{\hg \hh}(y,z,\sigma) = V_{g_y} \otimes V_{h_z}(1-s),
$$
the $(1-s)$-th Tate twist of the tensor product of the Galois representations attached to $g_y$ and $h_z$.

Fix $c \in \Z_{>1}$ such that $(c,6pN_gN_h)=1$. \cite[Theorem A]{KLZ} yields a three-variable $\Lambda$-adic global Galois cohomology class
$$
\kappa^c({\bf g},{\bf h}) \in H^1(\mathbb Q, \mathbb V_{{\bf gh}})
$$
that is referred to as the Euler system of Beilinson--Flach elements associated to $\hg$ and $\hh$. We denote by $\kappa_p^c(\hg,\hh) \in H^1(\mathbb Q_p, \mathbb V_{\hg \hh})$ the image of $\kappa^c(\hg,\hh)$ under the restriction map.

Since $c$ is fixed throughout, we may sometimes drop it from the notation. This constant does make an appearance in fudge factors accounting for the interpolation properties satisfied by the Euler system, but in all cases we are interested in these fudge factors do not vanish and hence do not pose any problem for our purposes.

Given a crystalline arithmetic point $(y,z,s) \in \cW_{{\bf gh}}^\circ$ of weights $(\ell,m,s)$, set for notational simplicity throughout this section $g=g_y^\circ$, $h=h_z^\circ$. With these notations, $g_y$ (resp.\,$h_z$) is the $p$-stabilization of $g$ (resp.\,$h$) with $U_p$-eigenvalue $\alpha_g$ (resp.\,$\alpha_h$).

Define
\begin{equation}\label{home}
\kappa(g_y,h_z,s):=\kappa({\bf g}, {\bf h})(y,z,s) \in H^1(\mathbb Q, V_{g_y} \otimes V_{h_z}(1-s))
\end{equation}
as the specialisation of $\kappa({\bf g},{\bf h})$ at $(y,z,s)$.

If one further assumes that $(y,z,s)$ lies in the geometric range, Kings, Loeffler and Zerbes showed in \cite{KLZ1} that the cohomology group appearing in \eqref{home} also hosts a canonical {\em Rankin-Eisenstein class}, denoted
\begin{equation}\label{Eisenstein}
\Eis_{\et}^{[g,h,s]} \in H^1(\mathbb Q, V_{g} \otimes V_{h}(1-s)).
\end{equation}
This class is attached to the classical pair $(g,h)$ and can be constructed purely by geometric methods, without appealing to the variation of $(g_y,h_z)$ in $p$-adic families. It is for this reason that in fact the classes $\Eis_{\et}^{[g,h,s]}$ lie in the Bloch--Kato Selmer subgroup
$$
H^1_{\fin}(\mathbb Q, V_{g} \otimes V_{h}(1-s)) \subset H^1(\mathbb Q, V_{g} \otimes V_{h}(1-s)).
$$
We refer to \cite[\S 5]{KLZ1} and \cite[Definition 3.3.2]{KLZ} for the precise statements.

Since both $ \kappa(g_y,h_z,s)$ and $\Eis_{\et}^{[g,h,s]}$ live in the same space, it  makes sense to ask whether they are related. This is the content of \cite[Theorem A (8.1.3)]{KLZ}:

\begin{theorem}\label{KLZ-ThmA}
Assume $(y,z,s) \in \mathcal W_{{\bf gh}}^\circ$ lies in the geometric range. Then
\begin{equation}\label{rel}
\kappa(g_y,h_z,s) = \mathcal E(g,h,s) \cdot \Eis_{\et}^{[g,h,s]}
\end{equation}
where
\begin{equation}\label{unfac}
\mathcal E(g,h,s) = \frac{\Big(1-\frac{p^{s-1}}{\alpha_{g}\alpha_{h}}\Big)\Big(1-\frac{\alpha_{g} \beta_{h}}{p^s}\Big)\Big(1-\frac{\beta_{g} \alpha_{h}}{p^s}\Big)\Big(1-\frac{\beta_{g} \beta_{h}}{p^s}\Big)(c^2-c^{2s-\ell-m+2})}{(-1)^{s-1} (s-1)! \binom{\ell-2}{s-1} \binom{m-2}{s-1}}.
\end{equation}
In particular $\kappa(g_y,h_z,s)$ lies  in $H_{\fin}^1(\mathbb Q, V_{g} \otimes V_{h}(1-s))$.
\end{theorem}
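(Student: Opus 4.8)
The theorem is attributed to \cite{KLZ} (Theorem A, formula (8.1.3)), so the plan is to explain how the stated identity follows from the comparison there between the $\Lambda$-adic Beilinson--Flach class and the geometric Rankin--Eisenstein classes, together with the explicit computation of the interpolation factors. The key input is that, over the geometric range, the cohomology group $H^1(\mathbb{Q}, V_g \otimes V_h(1-s))$ contains two natural classes: the specialisation $\kappa(g_y,h_z,s)$ of the Iwasawa-theoretic Euler system, which by construction is built from $\Lambda$-adic Eisenstein classes interpolated $p$-adically; and the motivic class $\Eis_{\et}^{[g,h,s]}$, which is constructed geometrically from the Beilinson--Flach element attached to the pair $(g,h)$ directly on the product of modular curves. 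Both restrict compatibly to the image of the \'etale regulator, and the first step is to invoke the compatibility of the Eisenstein--Iwasawa class with its finite-level avatars, i.e.\ the fact that the $\Lambda$-adic class is literally an inverse limit of (twists of) the classes $\Eis_{\et}^{[g,h,s]}$ along the tower.

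The second and main step is the explicit bookkeeping of the Euler factors. When one specialises the three-variable class at $(y,z,s)$, one must un-stabilise $g_y \rightsquigarrow g = g_y^\circ$ and $h_z \rightsquigarrow h = h_z^\circ$; each un-stabilisation, combined with the normalisations built into the definition of the $\Lambda$-adic class (the $c$-smoothing, the choice of level, the cyclotomic twist), contributes one of the four local Euler factors $\bigl(1-\tfrac{p^{s-1}}{\alpha_g\alpha_h}\bigr)$, $\bigl(1-\tfrac{\alpha_g\beta_h}{p^s}\bigr)$, $\bigl(1-\tfrac{\beta_g\alpha_h}{p^s}\bigr)$, $\bigl(1-\tfrac{\beta_g\beta_h}{p^s}\bigr)$ displayed in \eqref{unfac}, while the $c$-dependence produces the factor $(c^2 - c^{2s-\ell-m+2})$ and the combinatorial denominator $(-1)^{s-1}(s-1)!\binom{\ell-2}{s-1}\binom{m-2}{s-1}$ comes from the normalisation of the Rankin--Eisenstein class as a cohomology class with $\Sym$-coefficients versus its avatar in $V_g \otimes V_h(1-s)$. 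This is exactly the computation carried out in \cite[\S 5--8]{KLZ} and \cite{KLZ1}; here I would simply cite it and record the resulting value $\mathcal{E}(g,h,s)$.

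Finally, to conclude that $\kappa(g_y,h_z,s) \in H^1_{\fin}(\mathbb{Q}, V_g \otimes V_h(1-s))$, the plan is to observe that $\Eis_{\et}^{[g,h,s]}$ already lies in the Bloch--Kato Selmer group by \cite[\S 5]{KLZ1}, \cite[Definition 3.3.2]{KLZ} --- because it is motivic, hence crystalline at $p$ and unramified away from $Np$, and in the geometric range the weights are such that the relevant local conditions at $p$ are automatically the finite ones --- and that \eqref{rel} exhibits $\kappa(g_y,h_z,s)$ as a scalar multiple of it. The scalar $\mathcal{E}(g,h,s)$ is a $p$-adic number, possibly zero, but regardless of whether it vanishes the product lies in the same finite subspace.

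\textbf{Expected main obstacle.} The genuinely delicate point is not the abstract comparison but the precise determination of all the fudge factors, in particular keeping track of which $U_p$-eigenvalue ($\alpha$ versus $\beta$) appears in each of the four Euler factors after un-stabilising on both sides, and matching the power of $p$ in the cyclotomic twist against the Tate twist $(1-s)$; sign conventions and the $\binom{\ell-2}{s-1}\binom{m-2}{s-1}$ normalisation are where errors creep in. Since this is precisely the content of \cite[Theorem A]{KLZ}, in the paper itself the honest move is to quote that theorem verbatim rather than re-derive it, which is what the statement above does.
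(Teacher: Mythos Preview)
Your proposal is correct and matches the paper's approach exactly: the paper does not prove this theorem at all but simply quotes it as \cite[Theorem A (8.1.3)]{KLZ}, precisely as you anticipate in your final paragraph. Your sketch of the underlying mechanism (interpolation of finite-level classes, un-stabilisation producing the Euler factors, motivic origin giving the $H^1_{\fin}$ conclusion) is an accurate summary of what happens in \cite{KLZ}, but none of it appears in the present paper.
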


The following proposition recalls the existence of the so-called Perrin-Riou big logarithm, interpolating the Bloch--Kato logarithm $\log_{\mathrm{BK}}$ and dual exponential map $\exp^*_{\mathrm{BK}}$ associated to the classical specializations of a $\Lambda$-adic representation of $G_{\Q_p}$. We refer to \cite{BK} and \cite{Bel} for an introduction to $p$-adic Hodge theory and the definitions of these maps.

Recall the unramified character $\psi_{{\bf g}}$ of $G_{\mathbb Q_p}$ taking a Frobenius element $\Fr_p$ to $a_p({\bf g})$, and as before, let $\varepsilon_{{\bf g}}$ be the composition of the cyclotomic character $\underline{\varepsilon}_{\cyc}$ with the natural inclusion $\Lambda^{\times} \subset \Lambda_{{\bf g}}^{\times}$. Define the $G_{\Q_p}$-subquotient $$\mathbb V_{{\bf gh}}^{-+} := \mathbb V_{{\bf g}}^- \hat \otimes \mathbb V_{{\bf h}}^+$$ of $\mathbb V_{{\bf g}} \hat \otimes \mathbb V_{{\bf h}}$ of rank one over the two-variable Iwasawa algebra $\Lambda_{\hg} \hat\otimes \Lambda_{\hh}$. In light of \eqref{vaps}, the Galois action on $\mathbb V_{{\bf gh}}^{-+}$ is given by the character \begin{equation}\label{char-+} \eta_{{\bf h}}^{{\bf g}}:=\varepsilon_{\cyc}^{-1} \chi_h \cdot \psi_{{\bf g}} \otimes \psi_{{\bf h}}^{-1}\varepsilon_{{\bf h}}. \end{equation}

It follows that $\mathbb U_{{\bf gh}}^{-+}:=  \mathbb V_{{\bf gh}}^{-+}(\varepsilon_{\cyc} \varepsilon_{{\bf h}}^{-1})$ is an unramified $G_{\Q_p}$-module and we can thus invoke its $\Lambda$-adic Dieudonn\'e module as defined in \eqref{dieu}.

\begin{propo}\cite[Theorem 8.2.8]{KLZ}\label{biglog}
There is an injective morphism of $\Lambda_{{\bf gh}}$-modules \[ \mathcal L^{-+}_{{\bf gh}}: H^1(\mathbb Q_p, \mathbb V_{{\bf gh}}^{-+} \hat \otimes \Lambda(\varepsilon_{\cyc} \underline{\varepsilon}_{\cyc}^{-1})) \rightarrow \mathbb D(\mathbb U_{{\bf gh}}^{-+}) \hat \otimes \Lambda \]
such that for all $\kappa_p \in  H^1(\mathbb Q_p, \mathbb V_{{\bf gh}}^{-+} \hat \otimes \Lambda(\varepsilon_{\cyc} \underline{\varepsilon}_{\cyc}^{-1}))$ and all $(y,z,s) \in \mathcal W_{{\bf gh}}^\circ$ of weights $(\ell,m,s)$:

\begin{itemize}
\item if $s<m$, $(\mathcal L^{-+}_{{\bf gh}}(\kappa_p))_{y,z,s}=\Big( 1-\frac{p^{s-1}}{\alpha_{g} \beta_{h}} \Big) \Big(1-\frac{\alpha_{g} \beta_{h}}{p^{s}} \Big)^{-1} \cdot \frac{(-1)^{m-s+1}}{(m-s+1)!} \cdot \log_{\BK}(\kappa_p(y,z,s))$;
\item if $s \geq m$, $(\mathcal L^{-+}_{{\bf gh}}(\kappa_p))_{y,z,s} = \Big( 1-\frac{p^{s-1}}{\alpha_{g} \beta_{h}} \Big) \Big(1-\frac{\alpha_{g} \beta_{h}}{p^{s}} \Big)^{-1} (s-m)! \cdot \exp_{\BK}^*(\kappa_p(y,z,s))$.
\end{itemize}

Here, $\log_{\BK}$ and $\exp_{\BK}^*$ stand for the Bloch--Kato logarithm (resp. dual exponential) associated to the Dieudonn\'e module of the $p$-adic representation $V_{g_y}^- \otimes U_{h_z}^+(1-s)$.
\end{propo}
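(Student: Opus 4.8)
This is \cite[Theorem 8.2.8]{KLZ}, so the plan is to reproduce the strategy of \emph{loc.\,cit.}, which itself refines Perrin-Riou's big-logarithm construction and the formulation of Loeffler--Zerbes in \cite{LZ2}. First I would untwist in order to reach an \emph{unramified} situation: by \eqref{vaps} together with the definition $\mathbb{U}_{\hg\hh}^{-+}=\mathbb{V}_{\hg\hh}^{-+}(\varepsilon_{\cyc}\varepsilon_{\hh}^{-1})$, Shapiro's lemma identifies the source $H^1(\Q_p,\mathbb{V}_{\hg\hh}^{-+}\hat\otimes\Lambda(\varepsilon_{\cyc}\underline{\varepsilon}_{\cyc}^{-1}))$ with the cyclotomic Iwasawa cohomology of the twist of $\mathbb{U}_{\hg\hh}^{-+}$ by the canonical $\Lambda$-valued cyclotomic character. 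Since $\mathbb{U}_{\hg\hh}^{-+}$ is unramified as a $G_{\Q_p}$-module over $\Lambda_{\hg}\hat\otimes\Lambda_{\hh}$, its \'etale $(\varphi,\Gamma)$-module is free of rank one and is built from the $\Lambda$-adic Dieudonn\'e module $\mathbb{D}(\mathbb{U}_{\hg\hh}^{-+})$ of \eqref{dieu} with the canonical $(\varphi,\Gamma)$-action.

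Next I would define $\mathcal{L}^{-+}_{\hg\hh}$ exactly as in Perrin-Riou's theory: via the Fontaine--Herr and Cherbonnier--Colmez descriptions one presents this Iwasawa cohomology through ``$\psi=1$'' elements inside $\mathbb{D}(\mathbb{U}_{\hg\hh}^{-+})$ tensored with a module of distributions on $\Z_p^\times$, and $\mathcal{L}^{-+}_{\hg\hh}$ is then the composite of (the inverse of) the Mellin transform with the operator $1-\varphi$ and the trivialization of the unramified part along $\mathbb{D}(\mathbb{U}_{\hg\hh}^{-+})$; this produces a $\Lambda_{\hg\hh}$-linear map into $\mathbb{D}(\mathbb{U}_{\hg\hh}^{-+})\hat\otimes\Lambda$. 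For injectivity I would invoke the standard feature of this construction that the big logarithm is injective once $H^0(\Q_p,-)$ of the module vanishes --- which holds here because the character \eqref{char-+} is generically non-trivial, so that the relevant $\Lambda$-adic local cohomology is torsion-free over $\Lambda_{\hg\hh}$ --- since any class in the kernel would have vanishing Bloch--Kato logarithm at a Zariski-dense set of points and hence be zero; Perrin-Riou's machinery moreover identifies the cokernel explicitly.

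The interpolation formula I would then check specialization by specialization. At a crystalline classical point $(y,z,s)$ of weights $(\ell,m,s)$ one specializes the $(\varphi,\Gamma)$-module and matches the resulting map with $\log_{\BK}$ and $\exp^*_{\BK}$ of $V_{g_y}^-\otimes U_{h_z}^+(1-s)$ through the fundamental exact sequence relating $D_{\cris}$, $D_{\dR}/\Fil^0$ and local Galois cohomology (\cite{BK}, \cite{Bel}). The factor $\bigl(1-\tfrac{p^{s-1}}{\alpha_g\beta_h}\bigr)\bigl(1-\tfrac{\alpha_g\beta_h}{p^{s}}\bigr)^{-1}$ should emerge as the usual ratio $\det(1-p^{-1}\varphi^{-1}\mid D_{\cris})/\det(1-\varphi\mid D_{\cris})$ occurring in Perrin-Riou's formula, evaluated where $\varphi$ acts on the pertinent line of $\mathbb{D}(\mathbb{U}_{\hg\hh}^{-+})$ by $\alpha_g\beta_h/p^{s}$; and the constants $\tfrac{(-1)^{m-s+1}}{(m-s+1)!}$ for $s<m$, respectively $(s-m)!$ for $s\ge m$, should come out as the value at the point of the product of ``$\nabla$-operators'' (the $p$-adic $\Gamma$-factor at $p$) needed to pass from the weight-$0$ cyclotomic twist, at which the construction is normalized, to the twist $1-s$.

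The hard part will not be any single conceptual step but the relative setting and the normalizations: one has to run the entire $(\varphi,\Gamma)$-module and Perrin-Riou formalism over the large base $\Lambda_{\hg}\hat\otimes\Lambda_{\hh}$ --- so that ``unramified'' and ``Dieudonn\'e module'' are understood relatively, as in \eqref{dieu} --- and then keep precise track of signs, factorials and the exact shape of the Euler factor so that they dovetail with the interpolation formula for the Hida--Rankin $p$-adic $L$-function and make the explicit reciprocity laws used later in this paper come out without spurious constants. As all of this is carried out in detail in \cite[\S8.2]{KLZ}, in practice I would simply cite Theorem 8.2.8 of \emph{loc.\,cit.}
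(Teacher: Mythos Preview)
The paper gives no proof of this proposition at all: it is stated with the attribution \cite[Theorem 8.2.8]{KLZ} and used as a black box, with only a one-line remark afterwards about the degenerate case $\alpha_g\beta_h=p^s$. Your proposal correctly identifies this and ends by saying you would simply cite Theorem 8.2.8 of \emph{loc.\,cit.}, which is exactly what the paper does; the preceding sketch of the Perrin-Riou/$(\varphi,\Gamma)$-module construction is a reasonable summary of the KLZ argument but goes beyond anything the present paper supplies.
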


\begin{remark}
In the case where $\alpha_{g} \beta_{h} = p^s$, we implicitly understand that the Euler factor in the denominator appears on the left hand side of the equality.
\end{remark}

As shown in \cite[Theorem 8.1.7]{KLZ}, there is an injection \[ H^1(\mathbb Q_p, \mathbb V_{{\bf g}}^- \hat \otimes \mathbb V_{{\bf h}}^+ \hat \otimes \Lambda (\varepsilon_{\cyc} \underline{\varepsilon}_{\cyc}^{-1})) \hookrightarrow H^1(\mathbb Q_p, \mathbb V_{{\bf g}}^- \hat \otimes \mathbb V_{{\bf h}} \hat \otimes \Lambda (\varepsilon_{\cyc} \underline{\varepsilon}_{\cyc}^{-1})). \] If we denote by $\kappa_p^{--}({\bf g},{\bf h})$ the projection of  $\kappa_p({\bf g},{\bf h})$ to $H^1(\mathbb Q_p, \mathbb V_{{\bf g}}^- \hat \otimes \mathbb V_{{\bf h}}^- \hat \otimes \Lambda (\varepsilon_{\cyc} \underline{\varepsilon}_{\cyc}^{-1}))$, it is further shown in loc.\,cit.\,that
\begin{equation}\label{menos}
\kappa_p^{--}({\bf g},{\bf h}) = 0.
\end{equation}

As a consequence, the projection of $\kappa_p({\bf g}, {\bf h})$ to $H^1(\mathbb Q_p, \mathbb V_{{\bf g}}^- \hat \otimes \mathbb V_{{\bf h}} \hat \otimes \Lambda (\varepsilon_{\cyc} \underline{\varepsilon}_{\cyc}^{-1}))$ actually lies in $H^1(\mathbb Q_p, \mathbb V_{{\bf g}}^- \hat \otimes \mathbb V_{{\bf h}}^+ \hat \otimes \Lambda(\varepsilon_{\cyc} \underline{\varepsilon}_{\cyc}^{-1}))$ and we may hence denote it $\kappa_p^{-+}({\bf g}, {\bf h})$.

Let $\lambda_N({\bf g})$ denote the $\Lambda$-adic pseudo-eigenvalue of $\hg$ as defined in \cite[\S 10]{KLZ}, interpolating the Atkin-Lehner pseudo-eigenvalues of the classical specializations of $\hg$.
Recall from \eqref{omegaG} and \eqref{etaG} Ohta's families of differential forms $\eta_{{\bf g}} \in \mathbb D(\mathbb U_{{\bf g}}^+)$ and $\omega_{{\bf h}} \in \mathbb D(\mathbb V_{{\bf h}}^-)$.
As it follows from the properties recalled in loc.\,cit.,\,
there exists a homomorphism of $\Lambda_{{\bf gh}}$-modules \[ \langle \, , \eta_{{\bf g}} \otimes \omega_{{\bf h}} \rangle: \mathbb D(\mathbb U_{{\bf gh}}^{-+}) \hat \otimes \Lambda \rightarrow \mathcal Q_{{\bf gh}} \otimes \Q_p(\mu_N) \] such that for all $\delta \in \mathbb D(\mathbb U_{{\bf gh}}^{-+})\hat \otimes \Lambda$ and all $(y,z,s) \in \mathcal W_{{\bf gh}}^\circ$,
\begin{eqnarray}\label{pardif}
\nu_{y,z,s}(\langle \delta,\eta_{{\bf g}} \otimes \omega_{{\bf h}} \rangle) = & \frac{1}{\lambda_N(g) \mathcal E_0(g) \mathcal E_1(g)} \cdot \langle \nu_{y,z,s}(\delta),\mathrm{Pr}^{\alpha *}(\eta_{g^\circ_y}) \otimes \mathrm{Pr}^{\alpha *}(\omega_{h^\circ_z)} \rangle \\
\nonumber  = &  \frac{1}{\lambda_N(g) \mathcal E_0(g) \mathcal E_1(g)} \cdot \langle \mathrm{Pr}^{\alpha}_*(\nu_{y,z,s}(\delta)),\eta_{g^\circ_y} \otimes \omega_{h^\circ_z} \rangle,
\end{eqnarray}
where, recall again,
\[ \mathcal E_0(g)=1-\chi_g^{-1}(p) \beta_{g}^2 p^{1-\ell}, \qquad \mathcal E_1(g)=1-\chi_g(p) \alpha_{g}^{-2} p^{\ell-2}. \]

The following explicit reciprocity law is \cite[Theorem 10.2.2]{KLZ}.

\begin{theorem}\label{reclaw} Define the Iwasawa function
\begin{equation}\label{eulfac}
\mathcal A({\bf g}, {\bf h}) := \lambda_N({\bf g})^{-1} (-1)^{s}(c^2-c^{-(\ell+m-2-2s)} \varepsilon_{{\bf g}}(c)^{-1}\varepsilon_{{\bf h}}(c)^{-1})
\end{equation}
 in $\Lambda_{\hg \hh}$. Then
\begin{equation}\label{reclawf}
\langle \mathcal L_{{\bf gh}}^{-+}(\kappa^{-+}_p({\bf g}, {\bf h})), \eta_{{\bf g}} \otimes \omega_{{\bf h}} \rangle = \mathcal A({\bf g}, {\bf h}) \cdot L_p({\bf g}, {\bf h}).
\end{equation}
\end{theorem}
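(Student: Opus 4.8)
The plan is to reduce the $\Lambda$-adic identity \eqref{reclawf} to a pointwise statement at classical crystalline points, where it becomes the explicit reciprocity law for Rankin--Eisenstein classes of \cite{KLZ1}. Both sides of \eqref{reclawf} are meromorphic functions on $\mathcal W_{\bf gh}$, so it is enough to verify the equality on a Zariski-dense subset of $\mathcal W_{\bf gh}^\circ$, and one may take the points $(y,z,s)$ to lie in the geometric range $1\le s<\min(\ell,m)$. One should first check that the left-hand side is well defined: by \eqref{menos} the projection $\kappa_p^{-+}({\bf g},{\bf h})$ genuinely lands in the domain of the Perrin--Riou regulator $\mathcal L_{\bf gh}^{-+}$ of Proposition \ref{biglog}, so pairing it with Ohta's $\Lambda$-adic differential $\eta_{\bf g}\otimes\omega_{\bf h}$ through \eqref{pardif} makes sense.

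Next I would specialize at a classical point of the geometric range. There $s<m$, so the first branch of Proposition \ref{biglog} together with the interpolation formula \eqref{pardif} identifies the specialization of the left side of \eqref{reclawf} with $\langle\log_{\BK}(\kappa(g_y,h_z,s)),\eta_{g}\otimes\omega_{h}\rangle$ times an explicit factor built from $\lambda_N(g)^{-1}$, $\mathcal E_0(g)^{-1}$, $\mathcal E_1(g)^{-1}$, the Euler factor $\bigl(1-\frac{p^{s-1}}{\alpha_{g}\beta_{h}}\bigr)\bigl(1-\frac{\alpha_{g}\beta_{h}}{p^s}\bigr)^{-1}$ and $\frac{(-1)^{m-s+1}}{(m-s+1)!}$. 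Applying Theorem \ref{KLZ-ThmA} to rewrite $\kappa(g_y,h_z,s)=\mathcal E(g,h,s)\cdot\Eis_{\et}^{[g,h,s]}$, the whole problem collapses to computing the syntomic regulator $\langle\log_{\BK}(\Eis_{\et}^{[g,h,s]}),\eta_{g}\otimes\omega_{h}\rangle$ of the geometric Rankin--Eisenstein class.

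This last point is the heart of the matter, and is exactly the content of the main theorem of \cite{KLZ1}: the syntomic realization of $\Eis_{\et}^{[g,h,s]}$ is evaluated, via Besser's theory of syntomic regulators and an explicit Coleman integration of the relevant Eisenstein series, and is identified up to explicit Euler and combinatorial factors with a special value of a $p$-adic Rankin $L$-function. Matching this value with Hida's $L_p({\bf g},{\bf h})$ from Theorem \ref{Hida-3var} is itself delicate, since the geometric points used here lie \emph{outside} Hida's critical region of interpolation; it is carried out by comparing the ``geometric'' $p$-adic $L$-function produced by the regulator computation with Hida's, using the missing Euler factor (equivalently, the functional equation \eqref{functional}). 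Granting this, the remainder is pure bookkeeping: one propagates the pointwise identity back through the steps above and checks that $\mathcal E(g,h,s)$ from \eqref{rel} (with the binomials and factorial of \eqref{unfac}), the factors $\lambda_N(g),\mathcal E_0(g),\mathcal E_1(g)$ from \eqref{pardif}, and the Euler factor and factorial from Proposition \ref{biglog} recombine, after the cancellations forced by the interpolation normalizations, into precisely the Iwasawa function $\mathcal A({\bf g},{\bf h})$ of \eqref{eulfac}; Zariski density then upgrades the equality to the ring of meromorphic functions on $\mathcal W_{\bf gh}$.

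I expect the real obstacle to be the Rankin--Eisenstein regulator formula of \cite{KLZ1}: the $p$-adic analytic identification of the syntomic regulator of an Eisenstein class with a Rankin--Selberg convolution, and, equally delicate, the verification that all of the objects in play (Ohta's differentials, the Atkin--Lehner pseudo-eigenvalues $\lambda_N$, the Petersson periods) vary rigid-analytically, so that the pointwise formula can legitimately be recast in the $\Lambda$-adic shape \eqref{reclawf}. In practice essentially all of the technical weight of the proof is concentrated in the bookkeeping of these normalizations, which is why we are content to invoke \cite[Theorem 10.2.2]{KLZ} directly.
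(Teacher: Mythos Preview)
Your proposal is correct and aligns with the paper: the theorem is not proved in the paper at all but is simply quoted as \cite[Theorem 10.2.2]{KLZ}, and your final paragraph acknowledges exactly this. The sketch you give of the Zariski-density reduction to the regulator formula of \cite{KLZ1} is a fair summary of how that result is obtained, but for the purposes of this paper the direct citation suffices.
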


\subsection{The self-dual case}\label{self-dual}

Let $\hg$ be a Hida family of tame conductor $N$ and tame nebentype $\chi$, and set again $\hh=\hg^* = \hg \otimes \bar{\chi}$. Define the curve
\begin{equation}\label{C}
\cC := \cC_{\ell,\ell,\ell-1} = \{(y,z,\sigma) \in \cW_{\hg \hg^*}: \,  y=z, \quad \alpha_{g_y} \neq \beta_{g_y}, \quad \mathrm{w}(z)=\sigma \cdot \varepsilon_{\cyc}\}.
\end{equation}

Note that $\cC$ is a finite cover of the line in $\cW^3$ given as the set of regular points in the Zariski closure of the set of points of weights $(\ell,\ell,\ell-1)$ for some $\ell \geq 1$.

\begin{theorem}\label{classes-zero}
The restriction of $\kappa_p(\hg,\hg^*)$ to $\cC$ is zero.
\end{theorem}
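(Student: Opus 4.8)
The plan is to reduce the assertion to the vanishing of the $\Lambda$-adic class $\kappa(\hg,\hg^*)$ at a Zariski-dense set of points of $\cC$, and then to propagate this vanishing to all of $\cC$ through the $\Lambda$-adic Perrin--Riou regulators. For the first step, let $y\in\cW_{\hg}^\circ$ be a crystalline classical point of weight $\ell\geq 2$ with $\alpha_{g_y}\neq\beta_{g_y}$, and set $x_y:=(y,y,\nu_{\ell-1})$. Since $\mathrm{w}(\nu_{\ell-1})\cdot\varepsilon_{\cyc}=\nu_\ell=\mathrm{w}(y)$, the triple $x_y$ lies on $\cC$; and as $y$ varies over such points the $x_y$ are Zariski-dense in $\cC$, because the crystalline classical points are dense in $\cW_{\hg}$, the locus $\alpha_{g_y}=\beta_{g_y}$ is a proper closed subvariety, and $\cC$ is, as a finite cover of a line, parametrised by $\cW_{\hg}$ via $y\mapsto(y,y,\nu_{\mathrm{w}(y)-1})$. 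In the notation of Theorem \ref{KLZ-ThmA}, $x_y$ has weights $(\ell,\ell,\ell-1)$ and $1\leq \ell-1<\min(\ell,\ell)$, so it lies in the geometric range with $s=\ell-1$.

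The crucial observation is that $\kappa(\hg,\hg^*)$ vanishes at every $x_y$, which follows at once from Theorem \ref{KLZ-ThmA}. Writing $g=g_y^\circ$ and $h=(g_y^*)^\circ=g\otimes\chi^{-1}$, the ordinary $U_p$-eigenvalues of $g$ and $h$ satisfy $\alpha_h=\chi^{-1}(p)\alpha_g$ and $\beta_h=\chi^{-1}(p)\beta_g$, so using $\alpha_g\beta_g=\chi(p)p^{\ell-1}$ one gets
\[
\alpha_g\beta_h=\beta_g\alpha_h=\chi^{-1}(p)\,\alpha_g\beta_g=p^{\ell-1}.
\]
Hence the factors $(1-\alpha_g\beta_h/p^{s})$ and $(1-\beta_g\alpha_h/p^{s})$ appearing in the Euler-like quantity \eqref{unfac} both vanish at $s=\ell-1$, so $\mathcal E(g,h,\ell-1)=0$, and Theorem \ref{KLZ-ThmA} gives
\[
\kappa(\hg,\hg^*)(x_y)=\mathcal E(g,h,\ell-1)\cdot\Eis_{\et}^{[g,h,\ell-1]}=0.
\]
A fortiori the restriction $\kappa_p(\hg,\hg^*)(x_y)$ to $H^1(\Q_p,-)$, and therefore all of its projections to the graded pieces of the local filtration at $p$, vanish at every $x_y$.

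It remains to pass from this pointwise statement to the vanishing of $\kappa_p(\hg,\hg^*)|_{\cC}$ in $H^1(\Q_p,\mathbb V_{\hg\hg^*}|_{\cC})$. To do this I would use the $G_{\Q_p}$-filtration of $\mathbb V_{\hg}\hat\otimes\mathbb V_{\hg^*}$ attached to the two ordinary filtrations \eqref{primfil}, whose rank-one graded pieces I denote $\mathbb V^{++},\mathbb V^{+-},\mathbb V^{-+},\mathbb V^{--}$. By \eqref{menos} the projection $\kappa_p^{--}$ vanishes; since the local character governing $\mathbb V^{--}$ is ramified along $\cC$ one has $H^0(\Q_p,\mathbb V^{--}|_{\cC})=0$, so $\kappa_p(\hg,\hg^*)|_{\cC}$ lifts \emph{uniquely} to $H^1$ of the rank-three submodule, and the same $H^0$-vanishing for the successive off-diagonal subquotients makes all the filtration maps on $H^1$ injective. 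Now apply the $\Lambda$-adic Perrin--Riou logarithms to the rank-one subquotients: Proposition \ref{biglog} gives $\mathcal L^{-+}_{\hg\hg^*}$ on $\mathbb V^{-+}$, the mirror map for $\mathbb V^{+-}$ comes from the symmetry of Beilinson--Flach classes under interchanging $\hg$ and $\hg^*$, and a third map of Perrin--Riou type handles $\mathbb V^{++}$ once the first two projections are known to vanish. Each such map is injective with a target that is free, hence torsion-free, over the relevant Iwasawa algebra; restricted to $\cC$ the target stays a torsion-free $\cO(\cC)$-module, in which a class vanishing on a Zariski-dense set of points must be zero. By the interpolation formulas (Proposition \ref{biglog} and its analogues) the regulator image at $x_y$ is a non-zero multiple of $\log_{\BK}$ or $\exp^*_{\BK}$ of the corresponding local class, which vanishes by the previous paragraph; so the regulator images vanish on all of $\cC$, and injectivity forces successively $\kappa_p^{-+}|_{\cC}=0$, $\kappa_p^{+-}|_{\cC}=0$, and finally $\kappa_p(\hg,\hg^*)|_{\cC}=0$.

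The main obstacle is this last step: specialisation of $\Lambda$-adic Galois cohomology to the one-dimensional $\cC$ need not be exact, and $H^1(\Q_p,\mathbb V_{\hg\hg^*}|_{\cC})$ may carry $\cO(\cC)$-torsion, supported at the finitely many points where the local characters degenerate, so one cannot directly deduce ``identically zero'' from ``zero on a dense set''; the argument has to be routed through the torsion-free targets of the injective Perrin--Riou maps, and one must also check that those maps, and their specialisations along $\cC$, remain injective. For the $\mathbb V^{-+}$-component the reciprocity law of Theorem \ref{reclaw} provides a convenient packaging, since it identifies $\langle\mathcal L^{-+}_{\hg\hg^*}(\kappa^{-+}_p),\eta_{\hg}\otimes\omega_{\hg^*}\rangle$ with $\mathcal A(\hg,\hg^*)\cdot L_p(\hg,\hg^*)$, where $\mathcal A(\hg,\hg^*)$ is a unit near $\cC$: the vanishing of $\kappa_p(\hg,\hg^*)$ at the $x_y$ forces $L_p(\hg,\hg^*)|_{\cC}$ (a meromorphic function vanishing on a dense set) to be identically zero, hence $\mathcal L^{-+}_{\hg\hg^*}(\kappa^{-+}_p)|_{\cC}=0$. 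For the $\mathbb V^{++}$-part one may instead note that a global class unramified outside $p$ and lying locally at $p$ in $H^1(\Q_p,\mathbb V^{++})$ belongs to a strict Selmer group which is trivial under the running hypotheses (H1)--(H3).
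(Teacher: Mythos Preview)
Your overall strategy---show $\kappa(\hg,\hg^*)(x_y)=0$ at the dense set of crystalline points $x_y=(y,y,\ell-1)$ via the vanishing Euler factor in Theorem~\ref{KLZ-ThmA}, then propagate---is exactly the paper's. The difficulty is in the propagation step, and here your argument has a gap.

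You invoke Proposition~\ref{biglog} to say that $\mathcal L^{-+}_{\hg\hg^*}(\kappa_p^{-+})(x_y)$ equals a non-zero multiple of $\log_{\BK}(\kappa_p^{-+}(x_y))$, hence vanishes. But precisely at the points $x_y$ one has $\alpha_g\beta_h=p^{\ell-1}=p^s$, so the factor $(1-\alpha_g\beta_h/p^s)^{-1}$ in the interpolation formula blows up; as the Remark after Proposition~\ref{biglog} explains, the formula degenerates to $0\cdot\mathcal L^{-+}(\kappa_p)(x_y)=(\text{finite})\cdot\log_{\BK}(0)$, which is $0=0$ and yields no information about the regulator image. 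Your fallback via the reciprocity law is circular: you need $L_p(\hg,\hg^*)(x_y)=0$, but the $x_y$ lie outside the critical range, so this cannot be read off from classical values, and deducing it from the reciprocity law again requires knowing $\mathcal L^{-+}(\kappa_p^{-+})(x_y)=0$. Moreover, even granting that the image in the free target vanishes on $\cC$, you still need $\mathcal L^{-+}_{\hg\hg^*}|_{\cC}$ to remain injective, which does not follow formally from three-variable injectivity. The strict-Selmer suggestion for $\mathbb V^{++}$ is likewise unsubstantiated.

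The paper circumvents all of this by working intrinsically on $\cC$ rather than specialising three-variable regulators. The key observation is that along $\cC$ the character governing $\mathbb V_{\hg\hg^*}^{-+}\hat\otimes\Lambda(\varepsilon_{\cyc}\underline\varepsilon_{\cyc}^{-1})$ collapses to $\varepsilon_{\cyc}$, so $\mathbb V^{-+}|_{\cC}\simeq\Lambda_{\hg}(1)$ and Kummer theory gives
\[
H^1(\Q_p,\mathbb V^{-+}|_{\cC})\simeq H^1(\Q_p,\Z_p(1))\hat\otimes\Lambda_{\hg}\xrightarrow[\sim]{(\ord_p,\log_p)}\Lambda_{\hg}\oplus\Lambda_{\hg},
\]
a \emph{free} $\Lambda_{\hg}$-module in which vanishing at the $x_y$ forces vanishing identically. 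The same holds for $\mathbb V^{+-}$ by symmetry. For $\mathbb V^{++}$, once the other three projections are zero an exact-sequence argument (injectivity at the left coming from $H^0=0$ for the quotient) places $\kappa_p|_{\cC}$ in $H^1(\Q_p,\mathbb V^{++}|_{\cC})$, which by \cite[Theorem~8.2.3, Remark~8.2.4]{KLZ} is again free of rank one over $\Lambda_{\hg}$; the same density argument finishes. No Perrin--Riou map is needed at this stage.
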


\begin{proof} Recall firstly from \eqref{menos} that $\kappa_p^{--}({\bf g}, {\bf g^*})=0$. We also claim that $\kappa_p^{-+}(\hg,\hg^*)_{|\cC}= \kappa_p^{+-}(\hg,\hg^*)_{|\cC} = 0$. To see this, observe that the $\Lambda_{\hg}[G_{\Q_p}]$-module $\mathbb V_{\hg \hg^*}^{-+} \hat \otimes \Lambda(\varepsilon_{\cyc} \underline{\varepsilon}_{\cyc}^{-1})_{|\cC}$ is isomorphic to $\Lambda_{\hg}(1)$. This follows directly from \eqref{char-+} and \eqref{C}, because $\frac{\alpha_{g_y^{\circ}}}{\alpha_{g_y^{\circ}}} \cdot  \varepsilon_{\cyc}^{\ell-1} \cdot \varepsilon_{\cyc}^{2-\ell} = \varepsilon_{\cyc}$. Hence
\begin{equation}\label{two-comp}
H^1(\mathbb Q_p, \mathbb V_{{\bf gg^*}}^{-+} \hat \otimes \Lambda(\varepsilon_{\cyc} \underline{\varepsilon}_{\cyc}^{-1})_{|\cC}) \simeq H^1(\mathbb Q_p, \Lambda_{\hg}(1)) \simeq H^1(\mathbb Q_p, \mathbb Z_p(1)) \hat \otimes \Lambda_{\hg}
\stackrel{(\ord_p,\log_p)}{\simeq} \Lambda_{\hg} \oplus \Lambda_{\hg}
\end{equation}
and $\kappa_p^{-+}(\hg,\hg^*)_{|\cC}$ vanishes if and only if infinitely many of its specializations are zero. But this is true for any crystalline classical point $(y,y,\ell-1)$ on $\cC$ with $\ell>1$. Indeed, the factor $\mathcal E(g_y,h_z,s)$ of Theorem \ref{KLZ-ThmA} vanishes, as $\alpha_{g} \beta_{g}= \chi(p)p^{\ell-1}$, and hence
\begin{equation}\label{spec-eulfac}
\alpha_{h} = \alpha_{g} \cdot \chi^{-1}(p) = \frac{\alpha_{g} p^{\ell-1}}{\alpha_{g} \beta_{g}} = \frac{p^{\ell-1}}{\beta_{g}}.
\end{equation}
By \eqref{rel} this shows that the specialization of the global cohomology class $\kappa(\hg,\hg^*)(y,y,\ell-1)$ is zero for all $y\in \cW_{\hg}^\circ$ of weight $\ell>1$, and a fortiori $\kappa_p^{-+}(\hg,\hg^*)(y,y,\ell-1)=0$. We conclude that $\kappa_p^{-+}(\hg,\hg^*)_{|\cC}=0$ and likewise $\kappa_p^{+-}(\hg,\hg^*)_{|\cC}=0$ by a symmetric reasoning.





Finally, note that there is an exact sequence
\begin{equation}\label{ex-se}
 0 \ra H^1(\mathbb Q_p, \mathbb V_{{\bf gg^*}}^{++} \hat \otimes \Lambda(\varepsilon_{\cyc} \underline{\varepsilon}_{\cyc}^{-1})_{|\cC}) \rightarrow H^1(\mathbb Q_p, \mathbb V_{{\bf gg^*}|\cC}) \rightarrow H^1(\mathbb Q_p, \mathbb V_{\hg \hg^*}/(\mathbb V_{\hg \hg^*}^{++}\hat \otimes \Lambda(\varepsilon_{\cyc} \underline{\varepsilon}_{\cyc}^{-1}))_{|\cC}).
 \end{equation}

The first map above is injective because $H^0(\mathbb Q_p, \mathbb V_{\hg \hg^*}/(\mathbb V_{\hg \hg^*}^{++}\hat \otimes \Lambda(\varepsilon_{\cyc} \underline{\varepsilon}_{\cyc}^{-1}))_{|\cC})=0$, as it follows again from the description of $\mathbb V_{\hg \hg^*}^{-+}$ given in \eqref{char-+} (and similarly for $\mathbb V_{\hg \hg^*}^{+-}$ and $\mathbb V_{\hg \hg^*}^{--}$).

Since we have already shown that $\kappa_p^{--}(\hg,\hg^*)_{|\cC}= \kappa_p^{-+}(\hg,\hg^*)_{|\cC}= \kappa_p^{+-}(\hg,\hg^*)_{|\cC} = 0$, this implies that the image of $\kappa(\hg,\hg^*)_{|\cC}$ in the right-most term of \eqref{ex-se} vanishes. Hence, $\kappa(\hg,\hg^*)_{|\cC}$ lies in $H^1(\mathbb Q_p, \mathbb V_{\hg}^+ \hat \otimes \mathbb V_{\hg^*}^+ \hat \otimes \Lambda(\varepsilon_{\cyc} \underline{\varepsilon}_{\cyc}^{-1})_{|\cC})$. It follows from \cite[Theorem 8.2.3, Remark 8.2.4]{KLZ} that  the latter space is isomorphic to $\Lambda_{\hg}$, and thus $\kappa(\hg,\hg^*)_{|\cC}$ is zero if and only if infinitely many of its specializations are, which is the case as already argued above.

\end{proof}


\subsection{A derived system of Beilinson--Flach elements}\label{deri}

Keep the notations and assumptions as in previous sections. Theorem \ref{classes-zero} above establishes the vanishing of the local cohomology class $\kappa_p(\hg,\hg^*)$ along $\cC$ and it is thus natural to ask about the existence of  a {\it derived} cohomology class $\kappa_p'({\bf g}, {\bf g^*})$ on a proper subspace of three-dimensional weight space $\cW_{\hg \hg^*}$ containing $\cC$, bearing a reciprocity law with Hida--Rankin's improved $p$-adic $L$-function.

The purpose of this section is making this construction explicit. Consider the surface
$$
\cS := \mathcal S_{\ell,m,m-1} = \{(y,z,\sigma) \in \cW_{\hg \hg^*}: \, \mathrm{w}(z)=\sigma \cdot \varepsilon_{\cyc}\},
$$
which is a finite cover of the plane in $\cW^3$
arising as the Zariski closure of points of weights $(\ell,m,m-1)$ for some $\ell, m \geq 1$. Note that $\cS$ obviously contains the curve $\cC$.

Let $\mathbb V_{{\bf gg^*}|\cS}$ denote the restriction of $\mathbb V_{{\bf gg^*}}$ to the surface $\cS$ and $\mathbb V_{{\bf gg^*}|\cC}$ denote its restriction to $\cC$. The following proposition establishes the existence of a class $\kappa_p'(\hg,\hg^*) \in H^1(\mathbb Q_p, \mathbb V_{\hg \hg^*|\cS})$ that may be regarded as the derivative of $\kappa_p(\hg,\hg^*)$ along the $z$-direction.


We shrink weight space $\cW$ to a rigid-analytic  open disk $\cU \subset \cW$ centered at $1$ at which the finite cover $\mathrm{w}: \cW_{\hg} \ra \cW$ restricts to an isomorphism $\mathrm{w}: \cU_{\hg} \stackrel{\sim}{\ra} \cU$ with $y_0\in \cU_{\hg}$. Let $\Lambda_{\cU_{\hg}} = \cO(\cU_{\hg})$ denote the Iwasawa algebra of analytic functions on $\cU_{\hg}$ whose supremum norm is bounded by $1$. Shrink likewise $\cC$ and $\cS$ so that projection to weight space restricts to an isomorphism with $\cU$ and $\cU \times \cU$ respectively. Having done that, their associated Iwasawa algebras are respectively $\cO(\cC) = \Lambda_{\cU_{\hg}} \simeq \Z_p[[Z]]$ and $\cO(\cS) = \Lambda_{\cU_{\hg}}\hat\otimes \Lambda_{\cU_{\hg}}  \simeq \Z_p[[Y,Z]]$. The isomorphism $\Lambda_{\cU_{\hg}} \simeq \Z_p[[Z]]$ is not canonical and depends on the choice of an element $\gamma \in \Lambda_{\cU_{\hg}}^\times$ which is sent to $1+Z$.


Consider the short exact sequence of $\mathbb Z_p$-modules \[ 0 \rightarrow \mathbb Z_p[[Y,Z]] \xrightarrow{\cdot(Z-Y)} \mathbb Z_p[[Y,Z]] \rightarrow \mathbb Z_p[[Z]] \rightarrow 0. \] Under the above identifications
the previous exact sequence may be recast as
\[ 0 \rightarrow \cO_{\cS} \xrightarrow{\delta} \cO_{\cS} \rightarrow \cO_{\cC} \rightarrow 0 \]
with $\delta = 1\otimes(\gamma-1)-(\gamma-1)\otimes 1$ in $ \cO_{\cS} \simeq \Lambda_{\cU_{\hg}}\hat\otimes \Lambda_{\cU_{\hg}}$.

\begin{propo}\label{claseder}
There exists a  unique local class $ \kappa_{p,\gamma}'({\bf g}, {\bf g^*}) \in H^1(\mathbb Q_p, \mathbb V_{{\bf gg^*}|\cS})$ such that \[ \kappa_p({\bf g}, {\bf g^*})_{|\cS} = \delta  \cdot \kappa_{p,\gamma}'({\bf g}, {\bf g^*}). \]
\end{propo}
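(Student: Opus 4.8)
The plan is to produce $\kappa_{p,\gamma}'(\mathbf g,\mathbf g^*)$ as a preimage of $\kappa_p(\mathbf g,\mathbf g^*)_{|\cS}$ under multiplication by $\delta$ on local Galois cohomology, via the long exact cohomology sequence attached to the short exact sequence of $\cO_\cS$-modules displayed just before the statement, tensored with the representation $\mathbb V_{\mathbf{gg}^*|\cS}$. First I would record that $\mathbb V_{\mathbf{gg}^*|\cS}$ is free over $\cO_\cS$: indeed $\mathbb V_{\mathbf{gg}^*}=\mathbb V_{\mathbf g}\hat\otimes\mathbb V_{\mathbf g^*}\hat\otimes\Lambda(\varepsilon_{\cyc}\underline{\varepsilon}_{\cyc}^{-1})$ is free of rank four over $\Lambda_{\mathbf{gg}^*}$, and $\mathbb V_{\mathbf{gg}^*|\cS}$ is its base change along $\Lambda_{\mathbf{gg}^*}\to\cO_\cS$. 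Tensoring $0\to\cO_\cS\xrightarrow{\delta}\cO_\cS\to\cO_\cC\to0$ with this free module therefore stays exact and identifies the cokernel with $\mathbb V_{\mathbf{gg}^*|\cC}$, since restriction to $\cC$ is base change along $\cO_\cS\to\cO_\cC$; this gives a short exact sequence of $\cO_\cS[G_{\mathbb Q_p}]$-modules
\[
0\longrightarrow\mathbb V_{\mathbf{gg}^*|\cS}\xrightarrow{\ \delta\ }\mathbb V_{\mathbf{gg}^*|\cS}\longrightarrow\mathbb V_{\mathbf{gg}^*|\cC}\longrightarrow0.
\]

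Passing to continuous $G_{\mathbb Q_p}$-cohomology — which behaves well here, all modules being finitely generated over Noetherian Iwasawa algebras, and the resulting long exact sequence being of the same kind already used in \eqref{ex-se} — yields
\[
H^0(\mathbb Q_p,\mathbb V_{\mathbf{gg}^*|\cC})\xrightarrow{\ \partial\ }H^1(\mathbb Q_p,\mathbb V_{\mathbf{gg}^*|\cS})\xrightarrow{\ \delta\ }H^1(\mathbb Q_p,\mathbb V_{\mathbf{gg}^*|\cS})\xrightarrow{\ \mathrm{res}_\cC\ }H^1(\mathbb Q_p,\mathbb V_{\mathbf{gg}^*|\cC}),
\]
where $\mathrm{res}_\cC$ is the specialization map $\kappa\mapsto\kappa_{|\cC}$. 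For existence, Theorem~\ref{classes-zero} gives $\mathrm{res}_\cC\bigl(\kappa_p(\mathbf g,\mathbf g^*)_{|\cS}\bigr)=\kappa_p(\mathbf g,\mathbf g^*)_{|\cC}=0$, so $\kappa_p(\mathbf g,\mathbf g^*)_{|\cS}$ lies in the image of $\delta$, and I take $\kappa_{p,\gamma}'(\mathbf g,\mathbf g^*)$ to be any preimage. For uniqueness, two preimages differ by an element of $\ker\delta=\mathrm{image}\,\partial$, so it suffices to show $H^0(\mathbb Q_p,\mathbb V_{\mathbf{gg}^*|\cC})=0$. Here I would reuse the description of the $G_{\mathbb Q_p}$-filtration of $\mathbb V_{\mathbf{gg}^*|\cC}$ underlying the proof of Theorem~\ref{classes-zero}: $\mathbb V_{\mathbf{gg}^*|\cC}$ is an iterated extension of the four rank-one free $\Lambda_{\mathbf g}$-modules $\mathbb V^{\pm}_{\mathbf g}\hat\otimes\mathbb V^{\pm}_{\mathbf g^*}\hat\otimes\Lambda(\varepsilon_{\cyc}\underline{\varepsilon}_{\cyc}^{-1})_{|\cC}$; the $(-,+)$ graded piece is $\Lambda_{\mathbf g}(1)$, and by \eqref{char-+} (and its analogues for the $(+,-)$, $(-,-)$ and $(+,+)$ pieces) $G_{\mathbb Q_p}$ acts on each of the others through a character whose restriction to inertia is nontrivial, or which sends $\Fr_p$ to an element of $\Lambda_{\mathbf g}^\times$ different from $1$; in all four cases the $G_{\mathbb Q_p}$-invariants vanish. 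Since $H^0$ is left exact, $H^0(\mathbb Q_p,\mathbb V_{\mathbf{gg}^*|\cC})$ embeds into the direct sum of these invariants and is therefore zero, which gives the uniqueness.

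Finally, the dependence on $\gamma$ enters only through the choice of topological generator identifying $\cO_\cS$ with $\mathbb Z_p[[Y,Z]]$, hence through $\delta$ itself, and is simply recorded in the subscript. The one non-formal input is precisely the uniqueness step, namely the vanishing $H^0(\mathbb Q_p,\mathbb V_{\mathbf{gg}^*|\cC})=0$: once the $p$-local filtration of $\mathbb V_{\mathbf{gg}^*}$ along the self-dual curve $\cC$ has been computed — the same calculation that drives Theorem~\ref{classes-zero}, and which crucially uses that the $(-,+)$ piece degenerates to a genuine Tate twist $\Lambda_{\mathbf g}(1)$ rather than to an unramified module — the rest is routine homological algebra.
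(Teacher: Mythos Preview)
Your proposal is correct and follows essentially the same route as the paper: both arguments tensor the short exact sequence $0\to\cO_\cS\xrightarrow{\delta}\cO_\cS\to\cO_\cC\to0$ with $\mathbb V_{\mathbf{gg}^*|\cS}$, pass to the long exact sequence in local Galois cohomology, invoke Theorem~\ref{classes-zero} for existence, and deduce uniqueness from $H^0(\mathbb Q_p,\mathbb V_{\mathbf{gg}^*|\cC})=0$ (which the paper simply cites back to the character computations in the proof of Theorem~\ref{classes-zero}, while you spell out the graded-piece analysis in slightly more detail). One small phrasing point: $H^0$ of a filtered module does not literally embed into the direct sum of the $H^0$'s of the graded pieces, but the vanishing you want still follows by iterating the left-exact sequence $0\to H^0(A)\to H^0(B)\to H^0(C)$ through the filtration, so the conclusion stands.
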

\begin{proof}
The short exact sequence of $G_{\Q_p}$-modules  \[ 0 \rightarrow \mathbb V_{{\bf gg^*}|\cS} \xrightarrow{\delta} \mathbb V_{{\bf gg^*}|\cS} \rightarrow \mathbb V_{{\bf gg^*}|\cC} \rightarrow 0 \] gives rise to the long exact sequence \[ H^0(\mathbb Q_p, \mathbb V_{{\bf gg^*}|\cC}) \rightarrow H^1(\mathbb Q_p, \mathbb V_{{\bf gg^*}|\cS}) \xrightarrow{\delta} H^1(\mathbb Q_p, \mathbb V_{{\bf gg^*}|\cS}) \rightarrow H^1(\mathbb Q_p, \mathbb V_{{\bf gg^*}|\cC}). \] Since $H^0(\Q_p,\mathbb V_{{\bf gg^*}|\cC})=0$ as already argued in the proof of Theorem \ref{classes-zero}, the vanishing of $\kappa_p({\bf g},{\bf g^*})_{|\cC}$ implies the existence of a unique element $ \kappa_{p,\gamma}'({\bf g}, {\bf g^*}) \in H^1(\mathbb Q_p, \mathbb V_{{\bf gg^*}|\cS})$ satisfying the claim.
\end{proof}

We are interested in the restriction of $\kappa_{p,\gamma}'(\hg,\hg^*)$ to $\cC$; although it depends on the choice of the topological generator $\gamma$, the class $\kappa_p'(\hg,\hg^*) \in H^1(\mathbb Q_p, \mathbb V_{\hg \hg^*|\cC})$ defined by \[ \kappa_p'(\hg,\hg^*) = \frac{\kappa_{p,\gamma}'(\hg,\hg^*)}{\log_p(\gamma)} \] is independent of $\gamma$.

Since in our setting the Euler factor $1-\frac{\alpha_{g_y^{\circ}} \beta_{h_z^{\circ}}}{p^s}$ appearing in Proposition \ref{biglog} is equal to $1-\frac{\alpha_{g_y^{\circ}}}{\alpha_{g_z^{\circ}}}$, it is natural to introduce a modified $p$-adic $L$-function on the surface $\cS$, defined as
\begin{equation}\label{exp1}
\tilde L_p({\bf g}, {\bf g^*})(y,z,s) =  \Big( 1-\frac{\alpha_{g_y^{\circ}}}{\alpha_{g_z^{\circ}}} \Big) \times L_p({\bf g}, {\bf g^*})(y,z,s).
\end{equation}

Fix a point $(y,z,s) \in \mathcal W_{{\bf gh}}^{\circ} \cap \cS$. Set $L=\Q(g_y^{\circ},h_z^{\circ},\lambda_N(g_y^{\circ}))$ and let $L_p$ denote the $p$-adic completion of $L$. Define
\begin{equation}\label{log-+}
 \log^{-+}: H^1(\mathbb Q_p, \mathbb V_{g_y} \otimes V_{h_z}(1-s)) \xrightarrow{\pr^{-+}} H^1(\mathbb Q_p, \mathbb V_{g_y}^- \otimes V_{h_z}^+(1-s)) \rightarrow L_p,
\end{equation}
where the first map is the projection onto $H^1(\mathbb Q_p, \mathbb V^-_{g_y} \otimes V^+_{h_z}(1-s))$ and the last one is the composition of the Bloch--Kato logarithm with the pairing with the differential $\eta_{g^\circ_y} \otimes \omega_{h^\circ_z}$.

It follows from Theorems \ref{biglog} and  \ref{reclaw} that for all $(y,z,s) \in \mathcal W_{{\bf gh}}^{\circ} \cap \cS$:
\begin{equation}\label{exp2}
\tilde L_p({\bf g}, {\bf g^*})(y,z,s) = \Big( 1-\frac{\alpha_{g_z^{\circ}}}{p\alpha_{g_y^{\circ}}} \Big) \cdot \log^{-+} (\kappa_p({\bf g}, {\bf g^*})(y,z,s)),
\end{equation}
up to multiplication by the $c$-factor we have described in Theorem \ref{reclaw} and which does not affect to our discussion since we always work modulo $L^{\times}$.

\vskip 12pt

Observe that the function $\tilde L_p({\bf g},{\bf g^*})$ vanishes along the curve $\cC$, so the restriction of its derivative to that line is exactly zero. Recall that $\tilde L_p({\bf g},{\bf g^*})$ is a two-variable function, determined by the values of $y$ and $z$, since the third variable $\sigma$ comes automatically determined by $z$. Hence, one has that
\[ \frac{\partial}{\partial y} \tilde L_p({\bf g}, {\bf h}) + \frac{\partial}{\partial z} \tilde L_p({\bf g}, {\bf h}) = 0. \]

Recall that $\alpha_{{\bf g}}$ is an analytic function defined over $\Lambda_{{\bf g}}$; we denote by $\alpha_{{\bf g}}'$ its derivative. We may consider as before the $\mathcal L$-invariant attached to the adjoint representation of ${\bf g}$, \[ \mathcal L(\ad^0({\bf g})) = -\frac{\alpha_{{\bf g}}'}{\alpha_{{\bf g}}}. \] Observe that its specializations at classical points agree with the definitions given before in \eqref{citro}.

Now, we can compute the partial derivatives of $\tilde L_p(\hg,\hh)$ at a crystalline point $(y,y,\ell-1)$ of the curve $\cC$. Using \eqref{exp1}, one gets
\begin{equation}\label{partial-y}
\frac{\partial}{\partial y} \tilde L_p({\bf g}, {\bf h})(y,y,\ell-1) = \mathcal L(\ad^0(g_y)) \cdot L_p({\bf g},{\bf g^*})(y,y,\ell-1).
\end{equation}
Then, using \eqref{exp2}, we first observe that $\kappa_p(\hg,\hg^*)(y,z,s)$ vanishes at $(y,y,\ell-1)$ and its derivative in the $z$-direction is precisely the logarithm of the derived cohomology class we have previously computed in Proposition \ref{claseder}; hence, one gets that
\begin{equation}\label{partial-z}
\frac{\partial}{\partial z} \tilde L_p({\bf g}, {\bf h})(y,y,\ell-1) = (1-p^{-1}) \cdot (\log^{-+}(\kappa_p'({\bf g},{\bf g^*})(y,y,\ell-1))).
\end{equation}

We have then proved the following result:

\begin{theorem}\label{teobf}
For any crystalline point $(y,y,\ell-1)$ on $\cC$, it holds that \[ \mathcal L(\ad^0(g_y)) \cdot L_p({\bf g},{\bf g^*})(y,y,\ell-1) = \log^{-+} (\kappa_p'({\bf g},{\bf g^*})(y,y,\ell-1)) \pmod{L^{\times}}. \]
\end{theorem}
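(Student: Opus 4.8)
The plan is to extract Theorem \ref{teobf} from the two partial-derivative identities \eqref{partial-y} and \eqref{partial-z} already assembled, together with the observation that $\tilde L_p(\hg,\hg^*)$ vanishes identically along $\cC$. The starting point is that $\tilde L_p(\hg,\hg^*)(y,z,s)$ is, after the shrinking of weight space performed just before Proposition \ref{claseder}, a function of the two variables $y$ and $z$ alone (the cyclotomic variable $s$ being pinned to $\mathrm{w}(z)\cdot\varepsilon_{\cyc}^{-1}$ on $\cS$), and that its restriction to the diagonal $\cC = \{y=z\}$ is zero because the modifying Euler factor $1-\alpha_{g_y^\circ}/\alpha_{g_z^\circ}$ in \eqref{exp1} vanishes there. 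Differentiating the relation $\tilde L_p(\hg,\hg^*)(y,y,\ell-1)\equiv 0$ with respect to $y$ and applying the chain rule yields $\frac{\partial}{\partial y}\tilde L_p + \frac{\partial}{\partial z}\tilde L_p = 0$ at every crystalline point of $\cC$, which is the displayed identity just before \eqref{partial-y}.

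First I would carefully justify \eqref{partial-y}: writing $\tilde L_p = (1-\alpha_{g_y^\circ}/\alpha_{g_z^\circ})\cdot L_p(\hg,\hg^*)$ and applying the Leibniz rule, the term coming from differentiating $L_p(\hg,\hg^*)$ is killed because the vanishing factor $1-\alpha_{g_y^\circ}/\alpha_{g_z^\circ}$ multiplies it and we evaluate on $\cC$; what survives is $\partial_y(1-\alpha_{g_y^\circ}/\alpha_{g_z^\circ})\big|_{y=z}\cdot L_p(\hg,\hg^*)(y,y,\ell-1)$, and $\partial_y(1-\alpha_{g_y^\circ}/\alpha_{g_z^\circ})\big|_{y=z} = -\alpha'_{\hg}/\alpha_{\hg} = \mathcal L(\ad^0(g_y))$ by the definition \eqref{citro}. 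Second I would justify \eqref{partial-z}: here one differentiates the reciprocity identity \eqref{exp2}, $\tilde L_p(\hg,\hg^*)(y,z,s) = (1-\alpha_{g_z^\circ}/(p\alpha_{g_y^\circ}))\cdot\log^{-+}(\kappa_p(\hg,\hg^*)(y,z,s))$, in the $z$-direction at $(y,y,\ell-1)$. Because $\kappa_p(\hg,\hg^*)$ vanishes on $\cC$ by Theorem \ref{classes-zero}, the Leibniz term in which $\kappa_p$ is undifferentiated drops out, the prefactor $1-\alpha_{g_z^\circ}/(p\alpha_{g_y^\circ})$ specializes to $1-p^{-1}$ on the diagonal, and the $z$-derivative of $\kappa_p(\hg,\hg^*)_{|\cS}$ is, by Proposition \ref{claseder} and the normalization $\kappa_p'(\hg,\hg^*) = \kappa_{p,\gamma}'(\hg,\hg^*)/\log_p(\gamma)$, precisely $\kappa_p'(\hg,\hg^*)$; applying the continuous functional $\log^{-+}$ gives \eqref{partial-z}. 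One must check that $\log^{-+}$ and the derivation commute, which is clear since $\log^{-+}$ is $\Lambda$-linear and interpolates along the family.

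Combining the two, the identity $\partial_y\tilde L_p + \partial_z\tilde L_p = 0$ on $\cC$ reads
\[
\mathcal L(\ad^0(g_y))\cdot L_p(\hg,\hg^*)(y,y,\ell-1) + (1-p^{-1})\cdot\log^{-+}(\kappa_p'(\hg,\hg^*)(y,y,\ell-1)) = 0,
\]
and absorbing the nonzero constant $-(1-p^{-1})$ into the ambiguity $L^\times$ gives the asserted equality modulo $L^\times$. (One should also record that $p\nmid N$ forces $1-p^{-1}\neq 0$, and that the $c$-fudge factor from Theorem \ref{reclaw} is a unit which disappears modulo $L^\times$.) The main obstacle, and the step requiring the most care, is the bookkeeping around the derivation of \eqref{exp2}: one needs the reciprocity law \eqref{reclawf} to hold not just pointwise but as an identity of analytic functions on the surface $\cS$ (so that it may be differentiated), which requires knowing that $\kappa_p^{-+}(\hg,\hg^*)$ and the big logarithm $\mathcal L^{-+}_{\hg\hg^*}$ of Proposition \ref{biglog} are genuinely $\Lambda_{\hg\hg^*}$-adic objects whose specialization on $\cS$ matches $\log^{-+}\circ\kappa_p$ up to the explicit Euler factor $1-\alpha_{g_z^\circ}/(p\alpha_{g_y^\circ})$ — and, crucially, that the {\em other} Euler factor $1-p^{s-1}/(\alpha_g\beta_h)$ in Proposition \ref{biglog}, which equals $1-\alpha_{g_z^\circ}/\alpha_{g_y^\circ}$ on $\cS$ and hence vanishes on $\cC$, is exactly the one that gets cancelled against the vanishing of $\tilde L_p$; this matching of the two vanishing factors is the delicate input and is precisely why the modified $L$-function $\tilde L_p$ in \eqref{exp1} was introduced.
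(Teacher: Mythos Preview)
Your proof is correct and follows exactly the paper's approach: the theorem is stated immediately after \eqref{partial-y} and \eqref{partial-z} with the words ``We have then proved the following result,'' so the argument is precisely the combination of those two identities with $\partial_y\tilde L_p + \partial_z\tilde L_p = 0$ along $\cC$, absorbing $-(1-p^{-1})$ into $L^\times$. Your more detailed justifications of \eqref{partial-y} and \eqref{partial-z} via the Leibniz rule match what the paper sketches in the paragraphs preceding the theorem.

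One small correction to your closing discussion: you have the two Euler factors in Proposition \ref{biglog} swapped. On $\cS$ (where $\beta_{h_z^\circ} = p^{m-1}/\alpha_{g_z^\circ}$ and $s=m-1$) the factor $1-p^{s-1}/(\alpha_g\beta_h)$ specializes to $1-\alpha_{g_z^\circ}/(p\,\alpha_{g_y^\circ})$, which on $\cC$ becomes the nonvanishing $1-p^{-1}$ appearing in \eqref{exp2} and \eqref{partial-z}. The factor that vanishes on $\cC$ is the \emph{denominator} $1-\alpha_g\beta_h/p^s = 1-\alpha_{g_y^\circ}/\alpha_{g_z^\circ}$ (cf.\ the Remark after Proposition \ref{biglog}); this is why $\tilde L_p$ in \eqref{exp1} is defined by multiplying $L_p$ by precisely this factor, so that \eqref{exp2} holds without a pole. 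This mislabeling does not affect your argument, since you correctly invoke \eqref{exp1} and \eqref{exp2} as given.
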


\subsection{Weight one modular forms}
Let $g \in S_1(N,\chi_g)$ and $h\in S_1(N,\chi_h)$ be two cuspidal eigenforms of weight one. Let $V_g$ and $V_h$ denote the Artin representations over a finite extension $L$ of $\Q$ attached to $g$ and $h$. Let $\alpha_g, \beta_g$ (resp.\,$\alpha_h, \beta_h$) denote the roots of the $p$-th Hecke polynomial of $g$ (resp.\,of $h$). We assume throughout that $\alpha_g \ne \beta_g$ and $\alpha_h \ne \beta_h$.
We also assume $L$ is large enough as specified in \S \ref{Hidafam}.

\begin{defi}
Let ${\bf g}$ and ${\bf h}$ be Hida families passing through $p$-stabilizations $g_{\alpha}$, $h_{\alpha}$ of $g$, $h$ at some point $(y_0,z_0)\in \cW_{\hg}^\circ \times \cW_{\hh}^\circ$ of weights $(1,1)$. Define \[ \kappa(g_{\alpha},h_{\alpha}):=\kappa({\bf g},{\bf h})(y_0,z_0,0) \in H^1(\mathbb Q, V_{gh}\otimes L_p(1)) \] as the  specialization of $\kappa({\bf g}, {\bf h})$ at the point $(y_0,z_0,0)$.
\end{defi}

This procedure yields four a priori different global cohomology classes:
\begin{equation}\label{cuatro}
\kappa(g_{\alpha},h_{\alpha}), \quad \kappa(g_{\alpha},h_{\beta}), \quad \kappa(g_{\beta},h_{\alpha}), \quad \kappa(g_{\beta},h_{\beta}),
\end{equation}
one for each choice of pair of roots of the $p$-th Hecke polynomials of $g$ and $h$.

Given a $p$-adic representation $V$ of $G_{\Q_p}$ with coefficients in $\Q_p$, Bloch and Kato introduced in \cite{BK} a collection of subspaces of the local Galois cohomology group $H^1(\mathbb Q_p,V)$, denoted respectively \[ 0 \subset H_e^1(\mathbb Q_p, V) \subset H_{\fin}^1(\mathbb Q_p, V) \subset H_g^1(\mathbb Q_p,V) \subset H^1(\mathbb Q_p, V). \]

\begin{defi}
Let $V$ be a representation of $G_{\Q}$ with coefficients in $\Q_p$. The group of classes that are de Rham at $p$ (i.e.\,the restriction to $\mathbb Q_p$ lies in $H_g^1(\mathbb Q_p,V)$) and unramified at all primes $\ell \neq p$ is denoted as $H_{\fin,p}^1(\mathbb Q, V)$.

The group of classes that are crystalline at $p$ and unramified at any other prime $q \neq p$ is denoted as $H_{\fin}^1(\mathbb Q, V)$.
\end{defi}

\begin{propo}
The four classes in \eqref{cuatro} lie in $H_{\fin,p}^1(\mathbb Q, V_{gh}\otimes L_p(1))$.
\end{propo}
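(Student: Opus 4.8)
The plan is to verify the two defining conditions of $H^1_{\fin,p}(\mathbb Q, V_{gh}\otimes L_p(1))$ separately: (i) the classes are unramified at every prime $\ell\neq p$, and (ii) their restrictions at $p$ land in $H^1_g(\mathbb Q_p, V_{gh}\otimes L_p(1))$, i.e.\ are de Rham at $p$. For (i), the key observation is that $\kappa(g_\alpha,h_\alpha)$ is the specialization at $(y_0,z_0,0)$ of the $\Lambda$-adic class $\kappa(\hg,\hh)$, and the Euler-system construction of \cite{KLZ} guarantees that $\kappa(\hg,\hh)$ is unramified outside $p$ (all the ramification in the $\Lambda$-adic picture is concentrated at $p$ and at the tame level $N$, but $p\nmid N$ and the local conditions at primes dividing $N$ are inherited from the geometric Beilinson--Flach classes, which are unramified there up to the usual normalizations). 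Since specialization commutes with restriction to a decomposition group and the unramified condition is closed under specialization, each of the four classes in \eqref{cuatro} is unramified at all $\ell\neq p$. This part should be essentially a citation of the relevant statements in \cite[\S 8]{KLZ} together with the fact that $V_{gh}\otimes L_p(1)$ is unramified at $\ell\neq p$ (as $g,h$ have tame level $N$ and $p\nmid N$), so it carries no subtlety.

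For (ii), the idea is to run the weight-one specialization of Theorem \ref{KLZ-ThmA}, or rather its $\Lambda$-adic refinement. One cannot apply Theorem \ref{KLZ-ThmA} directly at $(y_0,z_0,0)$ since that point is not in the geometric range, but one can argue by interpolation: consider the $\Lambda$-adic local class $\kappa_p(\hg,\hh)$ and its image under the projections to the various $G_{\Q_p}$-subquotients $\mathbb V^{\pm\pm}_{\hg\hh}$. By \eqref{menos} the $(-,-)$ component vanishes identically, so at the specialized point the local class lies in the image of $H^1(\mathbb Q_p, V^-_{g}\otimes V_h(1)) + H^1(\mathbb Q_p, V_g\otimes V^-_h(1))$ inside $H^1(\mathbb Q_p, V_{gh}\otimes L_p(1))$; more precisely the nonzero contributions come from the $(+,+)$, $(-,+)$ and $(+,-)$ pieces. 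The de Rham (indeed $H^1_g$) property is automatic on the $(+,+)$ subquotient because $\mathbb V^{++}_{\hg\hh}\hat\otimes\Lambda(\varepsilon_{\cyc}\underline\varepsilon_{\cyc}^{-1})$ specializes at $(y_0,z_0,0)$ to a representation whose $H^1_g$ coincides with the full $H^1$ for dimension reasons (the quotient being the Tate twist of an unramified-by-nothing line — one checks Hodge--Tate weights). For the $(-,+)$ and $(+,-)$ pieces one uses that the Perrin-Riou big logarithm $\mathcal L^{-+}_{\hg\hh}$ of Proposition \ref{biglog} is defined on all of $H^1$ and interpolates $\log_{\BK}$ (up to an Euler factor that is a unit at our point); since $\mathcal L^{-+}_{\hg\hh}$ factors through the quotient by $H^1_e$ and the $\Lambda$-adic class maps into the de Rham part by construction of \cite{KLZ}, the specialized local classes are de Rham. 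Concretely, one invokes \cite[Theorem 8.1.4]{KLZ} (or the analogous statement recalled around Proposition \ref{biglog}) which asserts that the $\Lambda$-adic Beilinson--Flach classes are ``de Rham'' in the appropriate $\Lambda$-adic sense, a property which specializes.

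The main obstacle is making precise the claim that the de Rham condition is preserved under the specialization at a point \emph{outside} the interpolation range. The honest way to do this is to note that $H^1_g$ is cut out inside $H^1(\mathbb Q_p, V)$ by the vanishing of a map to $H^1(\mathbb Q_p, V\otimes B_{\cris})$-type data, and that this map, applied to the $\Lambda$-adic class, gives an element of a $\Lambda_{\hg\hh}$-module which vanishes because it vanishes at a Zariski-dense set of crystalline classical points (those in the geometric range, where Theorem \ref{KLZ-ThmA} forces the classes into $H^1_{\fin}\subset H^1_g$). This is the same density/interpolation mechanism used in the proof of Theorem \ref{classes-zero}, adapted to track not the class itself but its image under the ``non-de-Rham defect'' map; once set up, one concludes that the defect vanishes at $(y_0,z_0,0)$ as well, which is exactly the assertion that the four classes lie in $H^1_{\fin,p}$. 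I would organize the write-up as: first dispose of (i) by citation; then treat the $(+,+)$ part by a Hodge--Tate weight computation; then treat the $(-,+)$ and $(+,-)$ parts by the density argument on the de Rham defect, citing \cite[\S 8]{KLZ} for the $\Lambda$-adic de Rham-ness and Theorem \ref{KLZ-ThmA} for the crystalline points in the geometric range.
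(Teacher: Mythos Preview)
Your proposal would likely work in outline, but it is considerably more complicated than the paper's proof, and in one place it is less sharp.

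For the de Rham condition at $p$, you set up an elaborate interpolation scheme: split into $(\pm,\pm)$ pieces, handle $(+,+)$ by a Hodge--Tate weight computation, and then run a density argument on a ``de Rham defect map'' for the $(-,+)$ and $(+,-)$ components. None of this is needed. The paper simply observes that $\dim H^1_g(\Q_p, V_{gh}(1)) = \dim H^1(\Q_p, V_{gh}(1))$, citing \cite[\S1.4]{DR3}. Since $V_{gh}$ is an Artin representation, $V_{gh}(1)$ has all Hodge--Tate weights equal, and the standard Bloch--Kato dimension formulas force $H^1_g$ to be everything. So \emph{every} class in $H^1(\Q_p, V_{gh}(1))$ is automatically de Rham, and there is nothing to check about the specific classes $\kappa(g_\alpha,h_\alpha)$, etc. Your machinery is correct but unnecessary here.

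For the condition at $\ell \neq p$, you propose to cite that the $\Lambda$-adic class is unramified outside $p$ by construction, with a hand-wave about ``inheriting'' the local conditions at primes dividing $N$ from the geometric classes. This last point is not obvious and would need justification. The paper instead proves a stronger statement: for every $q \neq p$ one has $H^1(\Q_q, \mathbb V_{\hg\hh}) = 0$, so the local classes there vanish identically. The argument is a density trick: at sufficiently generic arithmetic points the specialized representation has no subquotient isomorphic to $\Q_p$ or $\Q_p(1)$, so Tate's local Euler characteristic formula forces $H^1(\Q_q, \cdot) = 0$ at those points; one then propagates this to the $\Lambda$-adic level as in \cite[Prop.~8.1.7 or Lemma 8.2.6]{KLZ}. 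This cleanly handles primes dividing $N$ without needing to analyze the geometric construction there.
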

\begin{proof}
The dimensions of $H_g^1(\Q_p,V_{gh}(1))$ and $H^1(\Q_p,V_{gh}(1))$ are equal, according to the discussion of \cite[Section 1.4]{DR3}; hence, the classes are de Rham at $p$. Furthermore, the restriction of these classes to $\mathbb Q_q$ is $0$ for $q \neq p$. In fact a stronger fact holds true: the local $\Lambda$-adic classes $\kappa_q(\hg,\hh)\in H^1(\Q_q,\mathbb{V}_{\hg \hh})$ are $0$ at all $q \neq p$. This can be argued for instance by fixing weights $(\ell,m,s)$ large enough so that for every triple $(\epsilon_1,\epsilon_2,\epsilon_3)$ of characters of arbitrary $p$-power conductor and for any point $(x,y,z)$ above $(\nu_{\ell,\epsilon_1},\nu_{m,\epsilon_2},\nu_{s,\epsilon_3})$,  $\mathbb{V}(\hg,\hh)(x,y,z)$ contains no sub-quotient isomorphic to neither $\Q_p$ nor $\Q_p(1)$. It then follows from Tate's local Euler characteristic formula (cf.\,\cite[2.5]{Nek}) that $H^1(\Q_q,\mathbb{V}_{\hg \hh}(x,y,z)) = 0$. From this it follows that  $H^1(\Q_q,\mathbb{V}_{\hg \hh})=0$ arguing as in e.g.\,\cite[Prop.\,8.1.7 or Lemma 8.2.6]{KLZ}.
\end{proof}

Recall that we let $H$ denote the Galois extension of $\Q$ cut out by $V_g \otimes V_h$.

\begin{propo}\label{Kummer}
There are natural identifications
\begin{eqnarray}\label{dims}
H_{\fin}^1(\mathbb Q, V_{gh} \otimes_L L_p(1))&=& (\mathcal O_H^{\times} \otimes_{\Z} V_{gh} \otimes_L L_p)^{G_{\Q}},  \\ \nonumber
H_{\fin,p}^1(\mathbb Q, V_{gh} \otimes_L L_p(1)) &=& (\mathcal O_H[1/p]^{\times} \otimes_{\Z} V_{gh} \otimes_L L_p)^{G_{\Q}}.
\end{eqnarray}
\end{propo}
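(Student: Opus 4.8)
The plan is to recognize both identities as instances of the classical Kummer-theory description of the finite (Bloch--Kato) Selmer group of a Tate twist of an Artin representation, combined with the fact that $V_{gh}$ is an Artin representation (so unramified and crystalline outside its conductor, and in particular outside $Np$) and that $p \nmid N$. First I would recall the global Kummer map $\mathcal O_H[1/S]^\times \otimes \Q_p/\Z_p \to H^1(G_{\Q,S}, \Q_p/\Z_p(1))$ for a finite set of places $S$ containing $p$, $\infty$ and the primes dividing $N$; tensoring with $V_{gh}\otimes_L L_p$, taking $G_\Q$-invariants, and passing to the limit, one gets an isomorphism $(\mathcal O_H[1/S]^\times \otimes_\Z V_{gh}\otimes_L L_p)^{G_\Q} \xrightarrow{\sim} H^1(G_{\Q,S}, V_{gh}\otimes_L L_p(1))$, using that $H$ is the field cut out by $V_g\otimes V_h$ and that $\mathcal O_H[1/S]^\times \otimes \Z_p$ already computes $H^1(G_{H,S},\Z_p(1))$ via Kummer theory (here one needs $H^2(G_{H,S},\Z_p(1))\otimes\Q_p$, i.e.\ the $p$-part of the $S$-class group, to not interfere after tensoring with $V_{gh}$ and taking invariants; since we are working with $\Q_p$-coefficients and $\mathrm{Sha}$-type obstructions are finite, this is harmless). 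The point is then to match the local conditions at the primes of $S$ on the two sides.

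Next I would carry out the local comparison place by place. At a prime $\ell \nmid Np$ the representation $V_{gh}(1)$ is unramified, and for an unramified representation the unramified local condition and the Bloch--Kato finite/geometric conditions all coincide (and agree with the image of local units), so imposing ``unramified at $\ell$'' on the Galois side is automatic for classes in the image of the global Kummer map from $\mathcal O_H[1/S]^\times$ and matches the $\ell$-unit contribution. At the archimedean place there is nothing to impose. At primes $\ell \mid N$ (so $\ell\neq p$), the same argument as in the preceding Proposition applies: one shows the local classes are forced to be unramified, so again the ``unramified at $\ell$'' condition on the left matches the $\ell$-integrality on the right; alternatively one arranges $S=\{p,\infty\}\cup\{\ell : \ell\mid N\}$ and notes that for Artin representations the local condition at bad primes away from $p$ is the unramified one. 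The only place where the two assertions of the proposition differ is at $p$: for the first identity we impose the crystalline (finite) condition $H^1_{\fin}(\Q_p,-)$, which under the Bloch--Kato exponential corresponds exactly to the image of the local units $\mathcal O_{H_p}^\times\otimes\Z_p$ — equivalently, it cuts out the $p$-integral units $\mathcal O_H^\times$ on the global side; for the second identity we impose only the de Rham (=$H^1_g$) condition at $p$, which is the full $H^1(\Q_p,-)$ here because $\dim H^1_g = \dim H^1$ by \cite[Section 1.4]{DR3} as used above, hence no condition at $p$ at all, which on the global side corresponds to allowing denominators at $p$, i.e.\ $\mathcal O_H[1/p]^\times$.

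The main obstacle, and the step requiring the most care, is the verification that $H^1_{\fin}(\Q_p, V_{gh}\otimes L_p(1))$ is precisely the image of $\mathcal O_{H,p}^\times \otimes V_{gh}\otimes L_p$ under the local Kummer map after taking $G_{\Q_p}$-(iso)typical pieces — i.e.\ that the Bloch--Kato ``f'' condition on the Tate twist $V(1)$ of an Artin representation $V$ is exactly the one coming from units, with no crystalline classes beyond those. This is a standard but slightly delicate $p$-adic Hodge theory computation: one uses that $D_{\cris}(V(1))$ and $D_{\dR}(V(1))/\Fil^0$ have the expected dimensions for an Artin representation twisted by $\Q_p(1)$, computes $\dim H^1_f$ and $\dim H^1_e$ from the Bloch--Kato exact sequences, and checks that $H^1_f$ equals the image of $(\mathcal O_{H_\wp}^\times\otimes\dots)$ and that $H^1_g = H^1$ (the ``exceptional'' feature here, since $V_{gh}(1)$ at $p$ can contain copies of $\Q_p(1)$, forcing $H^1_g = H^1$ but $H^1_f \subsetneq H^1$, which is exactly what produces the distinction between $\mathcal O_H^\times$ and $\mathcal O_H[1/p]^\times$ and is the arithmetic heart of this paper). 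Once these local identifications are in place, the two displayed equalities follow by assembling the global Poitou--Tate/Kummer exact sequence with the matched local conditions.
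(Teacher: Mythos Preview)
Your proposal is correct and is essentially a detailed unpacking of what the paper does: the paper's proof is a one-line citation to \cite[Prop.~2.9]{Bel}, and your sketch reproduces precisely the Kummer-theoretic argument (global Kummer map for $S$-units, matching of local conditions via $p$-adic Hodge theory, with the crystalline vs.\ de Rham distinction at $p$ accounting for $\mathcal O_H^\times$ vs.\ $\mathcal O_H[1/p]^\times$) that underlies that reference. There is no genuine difference in approach---you have simply written out what the citation encapsulates.
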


\begin{proof}
This follows from the same arguments as in e.g.\,\cite[Prop. 2.9]{Bel}.
\end{proof}

Since $\alpha_g \neq \beta_g$, $V_g$ decomposes as a $G_{\Q_p}$-module as $V_g = V_g^{\alpha_g} \oplus V_g^{\beta_g}$, where $V_g^{\alpha_g}$ and  $V_g^{\beta_g}$ are the $G_{\Q_p}$-invariant lines on which $\Fr_p$ acts with eigenvalue $\alpha_g$ and $\beta_g$ respectively. We similarly have $V_h = V_h^{\alpha_h} \oplus V_h^{\beta_h}$, and we may define
$$
V_{gh}^{\alpha \alpha} := V_g^{\alpha_g} \otimes V_{h}^{\alpha_h}, \quad ..., \quad  V_{gh}^{\beta \beta} := V_g^{\beta_g} \otimes V_{h}^{\beta_h}.
$$
Note that these four $G_{\Q_p}$-invariant lines in $V_{gh}$  are  linearly independent even though some of the eigenvalues $\alpha_g \alpha_h, \, \alpha_g \beta_h,\, \beta_g \alpha_{h}, \, \beta_g \beta_{h}$ might be equal. Hence there is a decomposition of $G_{\Q_p}$-modules
\begin{equation}\label{gh-decom}
V_{gh} := V_g \otimes V_{h} = V_{gh}^{\alpha \alpha} \oplus ... \oplus V_{g h}^{\beta \beta}.
\end{equation}

It follows that the local class $\kappa_p(g_{\alpha},h_{\alpha})$ may be decomposed as \[ \kappa_p(g_{\alpha},h_{\alpha}) =\kappa_p^{++}(g_{\alpha},h_{\alpha}) +\kappa_p^{+-}(g_{\alpha},h_{\alpha}) +\kappa_p^{-+}(g_{\alpha},h_{\alpha}) +\kappa_p^{++}(g_{\alpha},h_{\alpha}), \] where
\begin{eqnarray}\label{deckappa}
\kappa_p^{++}(g_{\alpha},h_{\alpha}) \in H^1(\mathbb Q_p, V_{gh}^{\beta \beta}\otimes L_p(1)), &  \kappa_p^{+-}(g_{\alpha},h_{\alpha}) \in H^1(\mathbb Q_p, V_{gh}^{\beta \alpha}\otimes L_p(1)) \\ \nonumber \kappa_p^{-+}(g_{\alpha},h_{\alpha}) \in H^1(\mathbb Q_p, V_{gh}^{\alpha \beta}\otimes L_p(1)), & \kappa_p^{--}(g_{\alpha},h_{\alpha}) \in H^1(\mathbb Q_p, V_{gh}^{\alpha \alpha}\otimes L_p(1)).
\end{eqnarray}

As before, we are specially interested in the case where $h=g^* = g\otimes \chi^{-1}$, and we impose on $g$ the assumptions (H1-H2-H3) listed in the introduction; if we denote by $\{\alpha,\beta\}$ the $p$-th Hecke eigenvalues of $g$, the $p$-th Hecke eigenvalues of $h$ are $\{1/\beta,1/\alpha\}$.

Let $\hg$ and $\hh=\hg^* := \hg \otimes \chi^{-1}$ denote the Hida families over $\cW_{\hg} = \cW_{\hh}$ passing through $g_\alpha$ and $(g_\alpha)^* = (g^*)_{1/\beta}$ respectively at some point $y_0\in \cW_{\hg}$ in weight space. Thanks to our running assumptions, the main theorem of \cite{BeDi} ensures that the weight map $\cW_{\hg} \lra \cW$ is \'etale at the point associated to $g_\alpha$. It is therefore possible to fix an open subset in $\cW_{\hg}$ around $g_\alpha$ on which the weight map is an isomorphism. This way we are entitled to work under the simplifying assumptions posed in \S \ref{self-dual} and the results in loc.\,cit.\,and \S \ref{deri} may be applied.



\begin{propo}\label{global0}
The global cohomology classes $\kappa(g_{\alpha},g_{1/\beta}^*)$ and $\kappa(g_{\beta},g_{1/\alpha}^*)$ are zero.
\end{propo}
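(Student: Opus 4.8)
The plan is to realise $\kappa(g_\alpha,g_{1/\beta}^*)$ and $\kappa(g_\beta,g_{1/\alpha}^*)$ as specialisations of three-variable Beilinson--Flach classes along the curve $\cC$ of \eqref{C}, and then to apply Theorem~\ref{classes-zero}. First I would check that the defining points lie on $\cC$. The class $\kappa(g_\alpha,g_{1/\beta}^*)=\kappa(\hg,\hg^*)(y_0,y_0,0)$ sits at the point $(y_0,y_0,0)$, which belongs to $\cC=\cC_{1,1,0}$: the first two coordinates coincide, the condition $\alpha_{g_{y_0}}\neq\beta_{g_{y_0}}$ is precisely hypothesis (H2), and the weight-$1$ point $y_0$ satisfies $\mathrm{w}(y_0)=\varepsilon_{\cyc}$ in $\cW$, so the equation $\mathrm{w}(z)=\sigma\cdot\varepsilon_{\cyc}$ defining $\cC$ holds with $\sigma=0$. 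For the second class, (H2) again yields a unique Hida family $\hg'$ through the $p$-stabilisation $g_\beta$, its twist $(\hg')^*=\hg'\otimes\chi^{-1}$ passes through $(g^*)_{1/\alpha}$, and $\kappa(g_\beta,g_{1/\alpha}^*)=\kappa(\hg',(\hg')^*)(y_0',y_0',0)$ with $(y_0',y_0',0)$ lying on the corresponding curve $\cC'$ attached to $\hg'$ (the condition $\alpha_{g_{y_0'}}\neq\beta_{g_{y_0'}}$ being once more (H2)).

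Next I would invoke Theorem~\ref{classes-zero}, whose proof in fact shows that $\kappa(\hg,\hg^*)$ vanishes identically on $\cC$: at every classical crystalline point $(y,y,\ell-1)$ of $\cC$ of weight $\ell\geq 2$, the specialisation equals $\mathcal E(g_y,g_y^*,\ell-1)\cdot\Eis_{\et}^{[g_y,g_y^*,\ell-1]}$ by \eqref{rel}, and the Euler factor $\mathcal E(g_y,g_y^*,\ell-1)$ of \eqref{unfac} vanishes, since $\alpha_{g_y}\beta_{g_y^*}=\alpha_{g_y}\beta_{g_y}\chi^{-1}(p)=\chi(p)p^{\ell-1}\chi^{-1}(p)=p^{\ell-1}$ annihilates its factor $(1-\alpha_{g_y}\beta_{g_y^*}/p^{\ell-1})$; as such points are Zariski dense in $\cC$ and $H^1(\Q,\mathbb V_{\hg\hg^*|\cC})$ is torsion-free over $\cO(\cC)\simeq\Lambda_{\cU_{\hg}}$, the $\Lambda$-adic class is zero. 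The same reasoning applies verbatim to $\kappa(\hg',(\hg')^*)$ on $\cC'$. Specialising $\kappa(\hg,\hg^*)_{|\cC}=0$ at $(y_0,y_0,0)$ and $\kappa(\hg',(\hg')^*)_{|\cC'}=0$ at $(y_0',y_0',0)$ then gives $\kappa(g_\alpha,g_{1/\beta}^*)=0$ and $\kappa(g_\beta,g_{1/\alpha}^*)=0$, as claimed.

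The step I expect to require the most care is the torsion-freeness of $H^1(\Q,\mathbb V_{\hg\hg^*|\cC})$, since it is exactly what licenses the passage from vanishing at a Zariski dense set of classical points to vanishing of the class. I would establish it by checking that $H^0(\Q,-)$ is zero for the residual representation and for each classical specialisation: every classical specialisation of $\mathbb V_{\hg\hg^*|\cC}$ has the shape $(\Q_p\oplus\ad^0(g_y))(1)$ — because $V_{g_y}\otimes V_{g_y^*}=\End(V_{g_y})=\Q_p\oplus\ad^0(g_y)$ and the Tate twists along $\cC$ cancel — and its $G_\Q$-invariants vanish because $\ad^0(g_y)$ is an Artin representation and so contains no copy of $\Q_p(-1)$, while the residual vanishing follows from the irreducibility and $p$-distinguishedness of $\bar\varrho_g$ in (H1)--(H2). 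This is precisely the control-theoretic ingredient already in play in \S\ref{self-dual}, so the proposition should ultimately be regarded as a corollary of Theorem~\ref{classes-zero}.
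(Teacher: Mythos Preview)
Your argument is essentially correct but follows a different route from the paper's. The paper does \emph{not} argue that the global $\Lambda$-adic class $\kappa(\hg,\hg^*)_{|\cC}$ vanishes; Theorem~\ref{classes-zero} is a statement about the \emph{local} class $\kappa_p(\hg,\hg^*)_{|\cC}$ only. The paper then deduces Proposition~\ref{global0} at the single weight-one point by showing that the restriction map $\res_p: H^1(\Q,V_{gg^*}(1))\to H^1(\Q_p,V_{gg^*}(1))$ is injective, which it does via the Kummer-theoretic identifications $H^1(\Q,V_{gg^*}(1))\simeq (H^\times\otimes V_{gg^*})^{G_\Q}$ and $H^1(\Q_p,V_{gg^*}(1))\simeq (H_p^\times\otimes V_{gg^*})^{G_{\Q_p}}$, so that $\res_p$ is just the inclusion $H\hookrightarrow H_p$. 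This step is specific to the weight-one (Artin) situation. Your approach instead upgrades the pointwise global vanishing at weights $\ell\ge 2$ (already observed inside the proof of Theorem~\ref{classes-zero}) to vanishing of the global class along all of $\cC$, via torsion-freeness of $H^1(\Q,\mathbb V_{\hg\hg^*|\cC})$. This is more general in spirit---it does not rely on the Artin nature of the weight-one point---at the cost of the control-theoretic input.

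Two small corrections to your justifications. First, for $y$ of weight $\ell\ge 2$ the representation $\ad^0(g_y)$ is \emph{not} Artin; the vanishing $H^0(\Q,\End(V_{g_y})(1))=0$ at such points instead follows from Hodge--Tate weight considerations (or simply from the fact that $\varepsilon_{\cyc}^{-1}$ has infinite image). This check is in any case redundant: what actually yields torsion-freeness over $\cO(\cC)$ is the residual vanishing $H^0(\Q,\bar T)=0$, which propagates to $H^0(\Q,T/\fp T)=0$ for every height-one prime $\fp$ by the usual Nakayama-type argument. Second, the residual vanishing is not really a consequence of (H1)--(H2) as you indicate: the clean reason is that $p\nmid N$ forces $\overline{\ad^0(g)}$ to be unramified at $p$, whereas the mod-$p$ cyclotomic character $\omega$ is ramified, so $\omega^{-1}$ cannot occur in $\overline{\ad^0(g)}$ (and $H^0(\Q,\bar L_p(1))=\mu_p(\Q)=0$ handles the trivial summand). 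With these adjustments your argument goes through.
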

\begin{proof}
The restriction map \[ \res_p: H^1(\mathbb Q, V_{gh}\otimes L_p(1)) \rightarrow H^1(\mathbb Q_p, V_{gh}\otimes L_p(1)) \] is injective. This follows from \cite[Proposition 2.12]{Bel}, which asserts that there are natural isomorphisms
\[ H^1(\mathbb Q, V_{gh}\otimes L_p(1)) \simeq (H^{\times} \otimes V_{gh})^{G_{\mathbb Q}}, \quad H^1(\mathbb Q_p, V_{gh}\otimes L_p(1)) \simeq (H_p^{\times} \otimes V_{gh})^{G_{\mathbb Q_p}}, \] and hence the restriction map corresponds to the natural inclusion $H \hookrightarrow H_p$. From Theorem \ref{classes-zero}, it follows that $\kappa_p(g_{\alpha},g_{1/\beta}^*)$ and $\kappa_p(g_{\beta},g_{1/\alpha}^*)$ are both zero, and the result follows.
\end{proof}

Define the curve
$$
\mathcal{D} := \{(y,z,\sigma) \in \cW_{\hg \hg^*}: \, y=y_0, \quad \mathrm{w}(z)=\sigma \cdot \varepsilon_{\cyc}\} \subset \cS
$$

\begin{propo}\label{global-derived}
There exists a unique global class $\kappa_{\gamma}'(\hg,\hg^*) \in H^1(\mathbb Q, \mathbb V_{\hg \hg^*|\mathcal{D}})$ such that \[ \kappa_{\gamma}'(\hg,\hg^*)_{|\mathcal{D}} = (\gamma-1)  \cdot \kappa_{\gamma}'(\hg,\hg^*). \]
\end{propo}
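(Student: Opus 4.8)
The strategy is to transpose the proof of Proposition \ref{claseder} from local to global Galois cohomology, now using the vanishing result of Proposition \ref{global0} in place of Theorem \ref{classes-zero}. First I would observe that, upon restriction to the curve $\mathcal{D}$, the $\Lambda$-adic representation $\mathbb V_{\hg\hg^*}$ is free over $\cO_{\mathcal{D}}\simeq\Lambda_{\cU_{\hg}}$: on $\mathcal{D}$ the first Hida family is specialized at the single point $y_0$, the second one still varies over $\cU_{\hg}$, and the cyclotomic variable is determined by $z$ through $\mathrm{w}(z)=\sigma\cdot\varepsilon_{\cyc}$. Writing $\gamma-1$ for the element of $\cO_{\mathcal{D}}$ vanishing at the center $z=y_0$, one checks that the operator $\delta=1\otimes(\gamma-1)-(\gamma-1)\otimes 1$ of Proposition \ref{claseder} restricts on $\mathcal{D}$ to $\gamma-1$, since $(\gamma-1)\otimes 1$ vanishes once the first variable is frozen at $y_0$. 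Flatness then yields a short exact sequence of $G_{\Q}$-modules
\[ 0 \to \mathbb V_{\hg\hg^*|\mathcal{D}} \xrightarrow{\ \cdot(\gamma-1)\ } \mathbb V_{\hg\hg^*|\mathcal{D}} \longrightarrow \mathbb V_{\hg\hg^*}(y_0,y_0,0) \to 0, \]
whose cokernel is the specialization at $(y_0,y_0,0)\in\mathcal{D}$, canonically identified with $V_{gh}\otimes L_p(1)=(L\oplus\ad^0(g))\otimes L_p(1)$.

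I would then pass to the associated long exact sequence in $G_{\Q}$-cohomology, the relevant stretch being
\[ H^0(\mathbb Q,\mathbb V_{\hg\hg^*}(y_0,y_0,0)) \xrightarrow{\ \partial\ } H^1(\mathbb Q,\mathbb V_{\hg\hg^*|\mathcal{D}}) \xrightarrow{\ \cdot(\gamma-1)\ } H^1(\mathbb Q,\mathbb V_{\hg\hg^*|\mathcal{D}}) \xrightarrow{\ \mathrm{sp}\ } H^1(\mathbb Q,\mathbb V_{\hg\hg^*}(y_0,y_0,0)). \]
Existence of $\kappa_{\gamma}'(\hg,\hg^*)$ follows because $\mathrm{sp}(\kappa(\hg,\hg^*)_{|\mathcal{D}})$ is precisely the specialization $\kappa(g_{\alpha},g_{1/\beta}^*)$, which vanishes by Proposition \ref{global0}; hence $\kappa(\hg,\hg^*)_{|\mathcal{D}}$ lies in the image of multiplication by $\gamma-1$, producing a class $\kappa_{\gamma}'(\hg,\hg^*)$ with $\kappa(\hg,\hg^*)_{|\mathcal{D}}=(\gamma-1)\cdot\kappa_{\gamma}'(\hg,\hg^*)$. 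Uniqueness follows because $\ker(\cdot(\gamma-1))$ on $H^1(\mathbb Q,\mathbb V_{\hg\hg^*|\mathcal{D}})$ equals $\mathrm{im}(\partial)$, hence is a quotient of $H^0(\mathbb Q,(L\oplus\ad^0(g))\otimes L_p(1))$; this group is zero since the cyclotomic character has infinite image while $\ad^0(g)$ has finite image, so neither $L_p(1)$ nor $\ad^0(g)\otimes L_p(1)$ carries a nonzero $G_{\Q}$-invariant.

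Since Proposition \ref{global0} is already in hand (it rests on Theorem \ref{classes-zero} together with the injectivity of $\res_p$), this argument has no genuine obstacle. The only points requiring care are the flatness of $\mathbb V_{\hg\hg^*|\mathcal{D}}$ over $\cO_{\mathcal{D}}$ — needed so that the three-term sequence stays exact after tensoring — and the bookkeeping identifying the quotient with the specialization at $(y_0,y_0,0)$ and the restriction of $\delta$ with $\gamma-1$, which is where the normalization of the disk $\cU$ around the weight-one point is used.
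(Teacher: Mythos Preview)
Your proof is correct and follows essentially the same approach as the paper: both take the short exact sequence $0\to\mathbb V_{\hg\hg^*|\mathcal{D}}\xrightarrow{\gamma-1}\mathbb V_{\hg\hg^*|\mathcal{D}}\to V_{gh}\otimes L_p(1)\to 0$, pass to the long exact sequence in global cohomology, and invoke Proposition \ref{global0} for existence together with $H^0(\Q,V_{gh}\otimes L_p(1))=0$ for uniqueness. Your version is somewhat more explicit in justifying the flatness, the identification of $\delta_{|\mathcal{D}}$ with $\gamma-1$, and the reason $H^0$ vanishes, but the argument is the same.
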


\begin{proof}
The short exact sequence of $G_{\Q}$-modules  \[ 0 \rightarrow \mathbb V_{{\bf gg^*}|\mathcal{D}} \xrightarrow{\delta} \mathbb V_{{\bf gg^*}|\cD} \rightarrow V_{gh}\otimes L_p(1) \rightarrow 0 \] gives rise to the long exact sequence
\[
H^0(\mathbb Q, V_{gh}\otimes L_p(1)) \rightarrow H^1(\mathbb Q, \mathbb V_{{\bf gg^*}|\mathcal{D}}) \xrightarrow{\delta} H^1(\mathbb Q, \mathbb V_{{\bf gg^*}|\mathcal{D}}) \rightarrow H^1(\mathbb Q, V_{gh}\otimes L_p(1)). \]
Since $H^0(\Q,V_{gh}\otimes L_p(1))=0$, the vanishing of $\kappa(g_{\alpha},g_{1/\beta}^*)$ proved in Proposition \ref{global0} implies the existence of a unique element $\kappa_{\gamma}'(\hg,\hg^*) \in H^1(\mathbb Q, \mathbb V_{\hg \hg^*|\mathcal{D}})$ satisfying the claim.
\end{proof}


\begin{remark}
The {\it global} cohomology class in Proposition \ref{global-derived} only makes sense along the curve $\mathcal{D}$. Besides,  the {\it local} cohomology class constructed in Proposition \ref{claseder} exists along the whole surface $\cS$. Hence one can not define the latter on the surface $\cS$ as the restriction at $p$ of the former, although this is indeed true after restricting to $\mathcal{D}$, a fact that we shall apply right below.
\end{remark}

Set
\begin{equation}\label{global-derived-1}
\kappa'(g_{\alpha},g_{1/\beta}^*) = \frac{1}{\log_p(\gamma)} \kappa_{\gamma}'(\hg,\hg^*)(y_0,y_0,0).
\end{equation}
Since the construction of this class coincides with the one performed in the previous section once we localize at $p$ and restrict to  the curve $\mathcal{D}$, Theorem \ref{teobf} applies and we deduce that
\begin{equation}\label{rec-law-l-inv}
\mathcal L(\ad^0(g_{\alpha})) \cdot L_p({\bf g},{\bf g^*})(y_0,y_0,0) = \log^{-+}(\kappa_p'(g_{\alpha},g_{1/\beta}^*)) \pmod{L^{\times}}.
\end{equation}

Recall from \eqref{functional} that $L_p(\hg,\hg^*)$ satisfies a functional equation relating the values at $s=0$ and $s=1$ up to a simple non-zero rational constant. Together with Proposition \ref{impad} this implies that
\begin{equation}\label{rec-law-base}
L_p({\bf g},{\bf g^*})(y_0,y_0,1)^2 = \mathcal L(\ad^0(g_{y_0}))^2 = \log^{-+}(\kappa_p'(g_{\alpha},g_{1/\beta}^*)) \pmod{L^{\times}}.
\end{equation}

This formula is the key input for deriving Theorem B, the second main result of this note.

\section{Derivatives of Fourier coefficients via Galois deformation theory}\label{sec:deform}

As in previous sections, let $g \in S_1(N,\chi)$ satisfying the hypothesis of the introduction. Let \[ \varrho_g: \Gal(H_g/\Q) \hookrightarrow \GL(V_g) \simeq \GL_2(L), \quad \varrho_{\mathrm{ad}^0(g)}: \Gal(H/\Q) \hookrightarrow \GL(\mathrm{ad}^0(g)) \simeq \GL_3(L) \]
denote the Artin representations associated to $g$ and its adjoint, respectively. Here $L$ is a finite extension of $\Q$ and $H_g \supseteq H$ denote the finite Galois extensions of $\Q$ cut out by these representations. Let $\mathcal P$ denote the set of primes of $H$ lying above $p$, and fix once for all a prime $\wp \in \mathcal P$, thus determining an embedding $H \subset H_p \subset \bar{\mathbb Q}_p$ of $H$ into its completion $H_p$ at $\wp$, and an arithmetic Frobenius $\Fr_p \in \Gal(H_p/\mathbb Q_p)$.

As it occurred in \cite{DLR2}, the regularity assumptions we have imposed on $g$ imply by e.g.\,in \cite[Prop. 3.2.2]{Das} that
\[ \dim_L ( \mathcal O_H^{\times} \otimes V_{gg^*})^{G_{\mathbb Q}} = 1, \quad  \dim_L (\mathcal O_H[1/p]^{\times} \otimes V_{gg^*})^{G_{\Q}} = 3,\]
and thus
\[ \dim_L (\mathcal O_H[1/p]^{\times}/p^{\mathbb Z} \otimes V_{gg^*})^{G_{\Q}} =   \dim_L (\mathcal O_H[1/p]^{\times}/p^{\mathbb Z} \otimes \ad^0(g))^{G_{\Q}} = 2. \]

Fix two linearly independent global cohomology classes
$$
u \in H_{\fin}^1(\mathbb Q, V_{gg^*} \otimes L_p(1)), \quad v \in H_{\fin,p}^1(\mathbb Q, V_{gg^*} \otimes L_p(1))
$$
such that under the identifications provided by Proposition \ref{Kummer}, project to a basis of the two-dimensional space $(\mathcal O_H[1/p]^{\times}/p^{\mathbb Z} \otimes \ad^0(g))^{G_{\Q}}$. By a slight abuse of notation, we continue to denote
$$
u\in (\mathcal O_H^{\times}\otimes \ad^0(g))^{G_{\Q}}, \quad v \in (\mathcal O_H[1/p]^{\times}/p^{\mathbb Z} \otimes \ad^0(g))^{G_{\Q}}
$$
the resulting elements.

Recall from \eqref{gh-decom} that $V_{gh}$ admits a natural decomposition as $G_{\Q_p}$-module as the direct sum of the four different lines $V_{gg^*}^{\alpha \alpha}$, ..., $V_{gg^*}^{\beta \beta}$. Since $\ad^0(V_g)$ is the quotient of $V_{gg^*}$ by the trivial representation, \eqref{gh-decom} descends to a decomposition of $\ad^0(V_g)$ as $G_{\Q_p}$-module as
$$
\ad^0(g) = \ad^0(g)^1 \oplus \ad^0(g)^{\alpha \otimes \bar \beta} \oplus \ad^0(g)^{\beta \otimes \bar \alpha} = L\cdot e_1 \oplus L\cdot e_{\alpha \otimes \bar \beta} \oplus L \cdot e_{\beta \otimes \bar \alpha},
$$
where $\Fr_p(e_1)=e_1$, $\Fr_p(e_{\alpha \otimes \bar \beta})=\frac{\alpha}{\beta} \cdot e_{\alpha \otimes \bar \beta}$, $\Fr_p(e_{\beta \otimes \bar \alpha})=\frac{\beta}{\alpha} \cdot e_{\beta \otimes \bar \alpha}$. Note that $\alpha/\beta \ne 1$ thanks to the regularity assumption. It could be that $\alpha=-\beta$ and hence $\alpha/\beta=\beta/\alpha=-1$, but the above decomposition is still available as explained in \eqref{gh-decom}.

Restriction to the decomposition group at $p$ allows us to regard $u$ and $v$ as elements in $H^1(\mathbb Q_p, \ad^0(g) \otimes L_p(1)) = (H_p^{\times} \otimes \ad^0(g))^{G_{\Q_p}}$, and as such we may write $u$ and $v$ as
\begin{equation}\label{decom-uv}
u = u_1 \otimes e_1 +u_{\alpha \otimes \bar \beta} \otimes e_{\alpha \otimes \bar \beta} +u_{\beta \otimes \bar \alpha} \otimes e_{\beta \otimes \bar \alpha}, \, v = v_1 \otimes e_1 + v_{\alpha \otimes \bar \beta} \otimes e_{\alpha \otimes \bar \beta} +v_{\beta \otimes \bar \alpha} \otimes e_{\beta \otimes \bar \alpha},
\end{equation}
where $u_1, v_1, u_{\alpha \otimes \bar \beta}, v_{\alpha \otimes \bar \beta}, u_{\beta \otimes \bar \alpha}, v_{\beta \otimes \bar \alpha} \in H_p^{\times}$ satisfy
$$
\Fr_p(u_1)=u_1, \quad \Fr_p(u_{\alpha \otimes \bar \beta})=\frac{\beta}{\alpha} \cdot u_{\alpha \otimes \bar \beta}, \quad \Fr_p(u_{\beta \otimes \bar \alpha})=\frac{\alpha}{\beta} \cdot u_{\beta \otimes \bar \alpha}
$$
and similarly for $v$.

We can now provide the last step in our proof of Theorems A, A' and B in the introduction. As it was shown in Corollary \ref{impad}, Theorem A may be reduced to the computation of the derivative of the Fourier coefficient $a_p(\hg)$ at $y_0$, where $\hg$ stands for the unique Hida family passing through $g_{\alpha}$.

Let \[ \tilde \rho_g: G_{\mathbb Q} \rightarrow \GL_2(L_p[\varepsilon]) \] be the unique first order $\alpha$-ordinary deformation of $\rho_g$ such that \[ \det \tilde \rho_g = \chi_g(1+\log_p \chi_{\cyc} \cdot \varepsilon). \] This representation, whose existence follows from \cite{BeDi}, satisfies \[ \tilde \rho_g = (1+\varepsilon \cdot \kappa_g) \cdot \rho_g, \] for some cohomology class $\kappa_g: G_{\mathbb Q} \rightarrow \ad(\rho)$. Considering a diagonal basis for the Frobenius action (where we take the first vector to have eigenvalue $\alpha$), the matrix form of $\kappa(\sigma)$ can be expressed as \[ \kappa(\sigma) = \mat{\kappa_1(\sigma)}{\kappa_2(\sigma)}{\kappa_3(\sigma)}{\kappa_4(\sigma)}. \] We denote by $\tilde \rho_{g,p}$ the restriction of $\tilde \rho_g$ to the decomposition group at $p$; in the same way, the restriction of $\kappa$ to the decomposition group at $p$ is denoted by $\kappa_p$, and similarly we denote by $\kappa_{i,p}$ the restriction of $\kappa_i$ to $G_{\mathbb Q_p}$.

In \cite[Lemmas 2.3 and 2.5]{BeDi} the authors determine the tangent space to a deformation problem which can be seen to be equivalent to ours. They conclude that there is a natural bijection between this tangent space and a certain subspace of $H^1(\mathbb Q, \ad(\rho_g))$; in this case, it consists on those classes $\kappa$ whose matrix representation satisfies
\begin{equation}
\kappa_{3,p}(\sigma)=0, \qquad \kappa_{1,p}(\sigma)|_{I_p}=0,
\end{equation}
being $I_p$ the inertia group at $p$. In \cite{DLR4}, the restriction of $\kappa$ to the inertia group at $p$ was determined, and the identifications of class field theory allow us to extend this to the whole decomposition group. A similar setting, where also a quite related deformation problem arises, is exploded in \cite{BDP} to treat the case of weight one Eisenstein points.

Let $V_g^{\alpha}$ be the \'etale subspace on which the action of the Frobenius is unramified for the $\Lambda$-adic representation. Restricting to the decomposition group at $p$, we have that \[ \tilde \rho_{g,p}|_{V_g^{\alpha}} = (1+\varepsilon \cdot \kappa_{1,p}) \cdot \alpha_g, \] where $\kappa_{1,p} \in H^1(\mathbb Q_p, \Hom(V_g^{\alpha},V_g^{\alpha}))$. If $g_{\alpha}'$ stands for the derivative of $\hg$ evaluated at $y_0$, we then have \[ a_p(g_{\alpha})+\varepsilon \cdot a_p(g_{\alpha}') =\alpha_g + \varepsilon \cdot \kappa_{1,p}(\Fr_p) \cdot \alpha_g, \] and consequently we have the following.

\begin{propo}
Let $\hg$ be the Hida family through $g_{\alpha}$. Then, it holds that \[ a_p(g_{\alpha}') = \kappa_{1,p}(\Fr_p) \pmod{L^{\times}}. \]
\end{propo}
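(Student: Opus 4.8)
The plan is to identify the first-order deformation of $\rho_g$ cut out by the Hida family $\hg$ at $y_0$ with the deformation $\tilde\rho_g$ described above, and then to read off the derivative of the $U_p$-eigenvalue from the étale part of the ordinary filtration of $\mathbb V_{\hg}$. Concretely, by \cite{BeDi} the weight map $\mathrm{w}\colon\cW_{\hg}\to\cW$ is \'etale at $y_0$ under (H1)--(H2)--(H3), so a first-order neighbourhood of $y_0$ in $\cW_{\hg}$ is governed by the dual numbers $L_p[\varepsilon]$, with $\varepsilon$ a coordinate pulled back from weight space. Reducing $\varrho_{\hg}$ accordingly produces a first-order deformation $\varrho_{\hg}^{[1]}\colon G_{\mathbb Q}\to\GL_2(L_p[\varepsilon])$ of $\rho_g$, of the shape $\varrho_{\hg}^{[1]}=(1+\varepsilon\cdot\kappa)\cdot\rho_g$ for a cocycle $\kappa\colon G_{\mathbb Q}\to\ad(\rho_g)$.

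First I would verify that $\varrho_{\hg}^{[1]}$ is $\alpha$-ordinary with the prescribed determinant, so that it coincides with $\tilde\rho_g$. Ordinarity is inherited from the Hida filtration \eqref{primfil}: truncating $0\to\mathbb V_{\hg}^+\to\mathbb V_{\hg}\to\mathbb V_{\hg}^-\to 0$ over $L_p[\varepsilon]$ yields a $G_{\mathbb Q_p}$-stable free line supported on $V_g^\alpha$, unramified up to the cyclotomic twist recorded in \eqref{vaps}. For the determinant, \eqref{vaps} gives $\det\varrho_{\hg}=\chi_g\,\varepsilon_{\cyc}^{-1}\underline{\varepsilon}_{\cyc}$, whose first-order truncation along the weight is $\chi_g(1+\log_p\chi_{\cyc}\cdot\varepsilon)$ after suitably normalising $\varepsilon$. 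The uniqueness of such a deformation, established in \cite[Lemmas 2.3 and 2.5]{BeDi}, then forces $\varrho_{\hg}^{[1]}=\tilde\rho_g$, so that the cocycle $\kappa$ here is exactly the one appearing in the statement and $\kappa_{1,p}$ governs the variation of the unramified line; its restriction to $I_p$ vanishes, by the tangent-space computation quoted from \cite{BeDi,DLR4}, so $\kappa_{1,p}(\Fr_p)$ is well defined.

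It remains to extract the Fourier coefficient. Under this identification, the \'etale part $V_g^\alpha$ of $\mathbb V_{\hg}$ — the line on which inertia acts trivially and $\Fr_p$ acts by $\psi_{\hg}(\Fr_p)=a_p(\hg)$ — is the unramified line of $\tilde\rho_{g,p}$ on which $\Fr_p$ acts by $\alpha_g\bigl(1+\varepsilon\cdot\kappa_{1,p}(\Fr_p)\bigr)$ (the $(1,1)$-entry in the diagonal Frobenius basis). Truncating $a_p(\hg)$ modulo the square of the maximal ideal at $y_0$ gives $a_p(\hg)\equiv a_p(g_\alpha)+\varepsilon\cdot a_p(g_\alpha')$, whence
\[
a_p(g_\alpha)+\varepsilon\cdot a_p(g_\alpha')=\alpha_g\bigl(1+\varepsilon\cdot\kappa_{1,p}(\Fr_p)\bigr).
\]
Comparing the coefficients of $\varepsilon$ yields $a_p(g_\alpha')=\alpha_g\cdot\kappa_{1,p}(\Fr_p)$, and since $\alpha_g\in L^\times$ this is the claim modulo $L^\times$.

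The main obstacle is the identification $\varrho_{\hg}^{[1]}=\tilde\rho_g$: beyond invoking \'etaleness and the uniqueness of the tangent vector, one must keep careful track of the normalisation of the deformation parameter (the $\varepsilon$ pinned down by $\det\tilde\rho_g=\chi_g(1+\log_p\chi_{\cyc}\cdot\varepsilon)$ versus the weight coordinate on $\cU_{\hg}$ used elsewhere in the paper), of the duality between the $M(\hg)$ and $M(\hg)^*$ normalisations of $\mathbb V_{\hg}$, and of the precise shape of the ordinary filtration modulo the square of the maximal ideal at $y_0$. Since the conclusion is asserted only up to $L^\times$, the rescaling ambiguity in $\varepsilon$ is harmless — it multiplies $a_p(g_\alpha')$ and $\kappa_{1,p}(\Fr_p)$ by the same factor — so the genuine content is the compatibility of Hida's ordinary filtration with the local deformation data recalled from \cite{BeDi,DLR4} in the paragraphs preceding the statement.
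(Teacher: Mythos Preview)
Your argument is correct and follows the same approach as the paper: the paper's justification is the short paragraph immediately preceding the proposition, where the restriction of $\tilde\rho_{g,p}$ to the \'etale line $V_g^\alpha$ is written as $(1+\varepsilon\cdot\kappa_{1,p})\cdot\alpha_g$ and the identity $a_p(g_\alpha)+\varepsilon\cdot a_p(g_\alpha')=\alpha_g+\varepsilon\cdot\kappa_{1,p}(\Fr_p)\cdot\alpha_g$ is read off directly. You supply more detail than the paper does---in particular you make explicit the step that the first-order truncation of $\varrho_{\hg}$ at $y_0$ satisfies the ordinarity and determinant conditions characterising $\tilde\rho_g$, and hence coincides with it by the uniqueness from \cite{BeDi}---whereas the paper leaves this identification implicit in the phrase ``the \'etale subspace on which the action of the Frobenius is unramified for the $\Lambda$-adic representation''.
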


Taking into account the identifications provided by class field theory, one can make $\kappa_1(\Fr_p)$ explicit.

From \cite[Section 3.2]{BeDi}, there is an exact sequence \[ 0 \rightarrow \Hom(G_H, \bar{\mathbb Q}_p) \rightarrow \Hom((\mathcal O_H \otimes \mathbb Q_p)^{\times}, \bar{\mathbb Q}_p) \rightarrow \Hom(\mathcal O_H^{\times} \otimes \mathbb Q_p,\bar{\mathbb Q}_p). \] Similarly, one has another exact sequence  \[ 0 \rightarrow \Hom(G_H, \bar{\mathbb Q}_p) \rightarrow \Hom((H \otimes \mathbb Q_p)^{\times}, \bar{\mathbb Q}_p) \rightarrow \Hom(\mathcal O_H[1/p]^{\times} \otimes \mathbb Q_p,\bar{\mathbb Q}_p). \]

Consequently, we have identifications \[ H^1(\mathbb Q, \bar{\mathbb Q}_p) \otimes \ad(\rho) \simeq (H^1(H,\bar{\mathbb Q}_p) \otimes \ad(\rho))^{G_{\mathbb Q}} \simeq (\Hom(G_H, \bar{\mathbb Q}_p) \otimes \ad(\rho))^{G_{\mathbb Q}}, \] and this corresponds with the subspace of homomorphisms of \[  (\Hom((H \otimes \mathbb Q_p)^{\times}, \bar{\mathbb Q}_p) \otimes \ad(\rho))^{G_{\mathbb Q}} \] vanishing at $\mathcal O_H[1/p]^{\times} \otimes \mathbb Q_p$.

We now recall some results which allow us to determine each of the $\kappa_{i,p}$. Since there are isomorphisms \[ (\mathcal O_H \otimes \mathbb Q_p)^{\times} \simeq \prod_{\mathfrak q \in \mathcal P} \mathcal O_{H_{\mathfrak q}}^{\times}, \qquad (H \otimes \mathbb Q_p)^{\times} \simeq \prod_{\mathfrak q \in \mathcal P} H_{\mathfrak q}^{\times}, \] we may write elements in $(\mathcal O_H \otimes \mathbb Q_p)^{\times}$ (or $(H \otimes \mathbb Q_p)^{\times}$) as tuples $(x_i)_{i \in \mathcal P}$. The action of the Galois group $G=\Gal(H/\mathbb Q)$ is transitive on $\mathcal P$, so any Galois equivariant homomorphism from $(H \otimes \mathbb Q_p)^{\times}$ (resp. ($\mathcal O_H \otimes \mathbb Q_p)^{\times}$)  to $\bar{\mathbb Q}_p$ is completely determined by its values on $H_p^{\times}$ (resp. $\mathcal O_{H_p}^{\times}$).

From class field theory, one has two distinguished elements in $H^1(\mathbb Q_p, \bar{\mathbb Q}_p)$:
\begin{enumerate}
\item The class $\kappa_{\nr}$, which is the unique homomorphism \[ \kappa_{\nr} \in \Hom(\Gal(\mathbb Q_p^{\nr}/\mathbb Q_p), \bar{\mathbb Q}_p) \] taking $\Frob_p$ to $1$.
\item The restriction to $G_{\mathbb Q_p}$ of the logarithm of the cyclotomic character \[ \kappa_{\cyc} := \log_p(\varepsilon_{\cyc}), \] which gives a ramified element of $H^1(\mathbb Q_p, \bar{\mathbb Q}_p)$.
\end{enumerate}

Furthermore, observe that $H^1(\mathbb Q_p, \bar{\mathbb Q}_p)$ is identified with $\Hom(G_{H_p}, \bar{\mathbb Q}_p)^{\Gal(H_p/\mathbb Q_p)}$ as explained above. In the latter space, we denote by $\kappa_{\nr}$ the morphism that takes $\Fr_p$ to $1$, and by $\kappa_{\cyc}$ the ramified element defined by $\log_p(\varepsilon_{\cyc})$.

\vskip 12pt

Now we can determine explicitly the element $\kappa \in H^1(\mathbb Q, \ad(\rho))$, by constructing a homomorphism \[ \Phi_g(x): (H \otimes \mathbb Q_p)^{\times} \rightarrow H_p \otimes \ad(\rho) \] corresponding to $\kappa$ via the previous identifications. In particular, it vanishes when evaluated at the basis $\{u,v\}$ of units for the adjoint. Moreover, $\Phi_g(\pi_p^{-1})$ leaves invariant the one-dimensional space $V_g^{\alpha}$, being $\pi_p^{-1}$ the id\`ele which is equal to the inverse of a local uniformiser of $H_p$ for the fixed prime $\wp$ above p, and to $1$ everywhere else. The eigenvalue for the action of $\Phi_g(\pi_p^{-1})$ on that subspace is precisely $\kappa_1(\Fr_p)$.

\vskip 12pt

Let $u_g^{\times}$ be any generator of $(\mathcal O_H^{\times} \otimes \ad(\rho))^{G_{\mathbb Q}}$, and let $v_g^{\times}$ be any element of the space $(\mathcal O_H[1/p]^{\times} \otimes \ad(\rho))^{G_{\mathbb Q}}$ such that $\{u_g^{\times}, v_g^{\times}\}$ is a basis of  $(\mathcal O_H[1/p]^{\times}/p^{\mathbb Z} \otimes \ad(\rho))^{G_{\mathbb Q}}$. Let \[ u_g := (\log_p \otimes \id)(u_g^{\times}), \quad v_g := (\log_p \otimes \id)(v_g^{\times}), \quad \tilde v_g := (\ord_p \otimes \id)(v_g^{\times}) \quad \in H_p \otimes \ad(\rho). \]

Consider the element \[ A_g \in (H_p \otimes \ad(\rho))^{G_{\mathbb Q_p}}, \] which on an eigenbasis for the Frobenius takes the form \[ A_g: \mat{0}{\frac{\log_p(u_1)}{\log_p(u_{\alpha \otimes \bar \beta})}}{0}{1}. \] Consider also $J \in (H_p \otimes \ad(\rho))^{G_{\mathbb Q_p}}$, which in the same basis is expressed as \[ J = \mat{a}{b}{0}{c}, \] where $a,b,c \in H_p$.

These choices give rise to the $G_{\mathbb Q_p}$-equivariant homomorphism $\Phi_g: H^{\times} \rightarrow H_p \otimes \ad(\rho)$ given by the rule
\begin{equation}
\Phi_g(x) := \sum_{\sigma \in G} \log_p(^{\sigma} x) \cdot (\sigma^{-1} \cdot A_g) + \sum_{\sigma \in G} \ord_p(^{\sigma} x) \cdot (\sigma^{-1} \cdot J).
\end{equation}

Here, $\sigma^{-1} \cdot A_g$ denotes the action of $\sigma^{-1}$ by conjugation on the second factor of the tensor product, $\ad(\rho)$; then, this homomorphism can be extended to the whole $(H \otimes \mathbb Q_p)^{\times}$. The aim is to determine a suitable $J$ such that $\Phi_g(x)$ corresponds to $\kappa$ via class field theory. The following result explains the behavior of the terms coming from the {\it $\log$-part}.

\begin{propo}
Let $\ad(\rho)^{\ord}:=\Hom(V_g/V_g^{\alpha},V_g)$. The homomorphism $\Phi_g$ vanishes on $\mathcal O_H^{\times} \otimes \mathbb Q_p$, and $\Phi_g(x) \subset H_p \otimes \ad(\rho)^{\ord}$ for any $x$ of the form $(x_p,1, \ldots, 1)$, where $x_p \in \mathcal O_{H_p}^{\times}$. Moreover, $\Phi_g(x)$ fixes $V_g^{\alpha}$ for all $x$ of the form $(x_p,1,\ldots,1)$, with $x_p \in H_p^{\times}$.
\end{propo}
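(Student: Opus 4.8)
The plan is to read the three assertions off the explicit formula for $\Phi_g$, writing $\Phi_g=\Phi_g^{\log}+\Phi_g^{\ord}$ for its two constituent sums and exploiting the shapes of $A_g$ and $J$ in the chosen Frobenius eigenbasis of $V_g$ (whose first vector spans $V_g^{\alpha}$), together with the id\`elic description $(H\otimes\Q_p)^{\times}\cong\prod_{\mathfrak q\in\mathcal P}H_{\mathfrak q}^{\times}$ and the transitivity of $G=\Gal(H/\Q)$ on $\mathcal P$. Two structural facts do most of the work. First, since $\alpha\neq\beta$ and $g$ has weight one, $V_g^{\alpha}$ is a $G_{\Q_p}$-stable line, and hence $\ad(\rho)^{\ord}=\Hom(V_g/V_g^{\alpha},V_g)$ is a $G_{\Q_p}$-stable subspace of $\ad(\rho)=\End(V_g)$, consisting in the eigenbasis precisely of the matrices supported on the second column. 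Inspecting the definitions, $A_g$ lies in $H_p\otimes\ad(\rho)^{\ord}$, while $J$ is upper triangular and therefore stabilizes $V_g^{\alpha}$. Because $V_g^{\alpha}$ and $\ad(\rho)^{\ord}$ are $G_{\Q_p}$-stable, these two properties persist after conjugating by elements of the decomposition subgroup $G_{\wp}\subseteq G$ at $\wp$ (identified with $\Gal(H_p/\Q_p)$): each $\sigma^{-1}\cdot A_g$ with $\sigma\in G_{\wp}$ still lies in $H_p\otimes\ad(\rho)^{\ord}$, and each $\sigma^{-1}\cdot J$ still stabilizes $V_g^{\alpha}$.

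Granting this, the two assertions about an id\`ele $x=(x_p,1,\dots,1)$ concentrated at $\wp$ follow at once. For such an $x$ the $\wp$-component of ${}^{\sigma}x$ is trivial unless $\sigma\in G_{\wp}$, so in each of the two defining sums only the indices $\sigma\in G_{\wp}$ contribute; thus $\Phi_g(x)$ is an $H_p$-linear combination of the elements $\sigma^{-1}\cdot A_g$ and $\sigma^{-1}\cdot J$ with $\sigma\in G_{\wp}$, each of which stabilizes $V_g^{\alpha}$, whence $\Phi_g(x)$ stabilizes (``fixes'') $V_g^{\alpha}$. If moreover $x_p\in\mathcal O_{H_p}^{\times}$, then $\ord_p({}^{\sigma}x)=0$ for every $\sigma$, so the $\ord$-sum vanishes identically and $\Phi_g(x)$ is an $H_p$-linear combination of elements of $H_p\otimes\ad(\rho)^{\ord}$, hence lies in $H_p\otimes\ad(\rho)^{\ord}$.

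It remains to prove the vanishing on $\mathcal O_H^{\times}\otimes\Q_p$. A global unit $x$ has trivial valuation at every prime of $\mathcal P$, so $\Phi_g^{\ord}(x)=0$ and $\Phi_g(x)=\Phi_g^{\log}(x)=\sum_{\sigma\in G}\log_p({}^{\sigma}x)\,(\sigma^{-1}\cdot A_g)$. The single free entry $\log_p(u_1)/\log_p(u_{\alpha\otimes\bar\beta})$ of $A_g$ has been calibrated exactly so that this expression vanishes: under the class field theory identifications recalled above, $\Phi_g^{\log}$ restricted to $\mathcal O_H^{\times}\otimes\Q_p$ corresponds to the pairing against the generator $u$ of the one-dimensional space $(\mathcal O_H^{\times}\otimes\ad^0(g))^{G_{\Q}}$ — the trivial constituent of $\ad(\rho)$ contributing nothing because $(\mathcal O_H^{\times})^{G_{\Q}}$ is torsion — and, unwinding the decomposition \eqref{decom-uv} of $u$ in the eigenbasis, the vanishing of that pairing is precisely the linear relation solved by the displayed entry of $A_g$; since $(\mathcal O_H^{\times}\otimes\ad(\rho))^{G_{\Q}}$ is one-dimensional, $\Phi_g^{\log}$ annihilating $u$ forces it to annihilate all of $\mathcal O_H^{\times}\otimes\Q_p$. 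I expect this last step to be the main obstacle: one must make the class-field-theoretic translation precise enough that the single normalization of $A_g$ really forces vanishing on the full unit group, and verify that the entry it imposes is the quoted one. The other two assertions are, by contrast, just the matrix bookkeeping described above, once one has identified which $\sigma\in G$ survive for an id\`ele concentrated at $\wp$.
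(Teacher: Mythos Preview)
Your proposal is correct and matches the paper's approach, which simply cites \cite[Lemma~1.6]{DLR4} for the $\log$-part (global units have trivial valuation, so the $\ord$-terms contribute nothing) and reads the last assertion off the upper-triangular shape of $A_g$ and $J$; your treatment of the second and third assertions is exactly this bookkeeping.

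One sharpening for the first assertion: your phrase ``$\Phi_g^{\log}$ annihilating $u$'' has a type mismatch, since $u$ lives in $(\mathcal O_H^\times\otimes\ad(\rho))^{G_\Q}$ rather than in the domain of $\Phi_g$, and no class field theory is actually needed. The clean way to say what you mean is the trace-pairing argument carried out verbatim in the paper's very next proposition: for $w\in\mathcal O_H^\times$ and arbitrary $B\in\ad(\rho)$ one computes
\[
\Tr\bigl(\Phi_g^{\log}(w)\cdot B\bigr)=\Tr\Bigl(A_g\cdot(\log_p\otimes\id)\bigl(\textstyle\sum_{\sigma\in G}{}^{\sigma}w\otimes(\sigma\cdot B)\bigr)\Bigr),
\]
and since $\sum_{\sigma}{}^{\sigma}w\otimes(\sigma\cdot B)$ lies in the one-dimensional space $(\mathcal O_H^\times\otimes\ad(\rho))^{G_\Q}=L\cdot u_g^\times$, this reduces to $\Tr(A_g\cdot u_g)=0$, which is precisely the calibration of the single off-diagonal entry of $A_g$. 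Non-degeneracy of the trace pairing then gives $\Phi_g^{\log}(w)=0$.
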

\begin{proof}
This follows from \cite[Lemma 1.6]{DLR4}, where $\Phi_g$ is defined in the same way but without the $\ord$-terms. It is clear that the behavior at the $\mathcal O_H$-units is not affected by the presence of these extra terms. The last part of the statement follows from the definition of $A_g$ and $J$.
\end{proof}

To determine the remaining parameters of the matrix $J$ ($a$, $b$ and $c$), we first observe that the homomorphism $\Phi_g$ must vanish at $\mathcal O_H[1/p]^{\times} \otimes \mathbb Q_p$. The endomorphism $v_g$ is represented by a matrix of the form \[ v_g: \mat{\log_p(v_1)}{\log_p(v_{\beta \otimes \bar \alpha})} {\log_p(v_{\alpha \otimes \bar \beta})}{-\log_p(v_1)}, \] and hence
\begin{equation}\label{traza}
\Tr(A_g v_g) = \frac{\log_p(u_1) \cdot \log_p(v_{\alpha \otimes \bar \beta})-\log_p(u_{\alpha \otimes \bar \beta}) \cdot \log_p(v_1)}{\log_p(u_{\alpha \otimes \bar \beta})}.
\end{equation}

Furthermore, observe that the restrictions of both $\kappa_{1,p}$ and $\kappa_{4,p}$ to the decomposition group at $p$ belong to \[  H^1(\mathbb Q_p, \bar{\mathbb Q}_p) \simeq H^1(H_p, \bar{\mathbb Q}_p)^{G_{\mathbb Q_p}}; \] this space is two-dimensional and it is generated by $\kappa_{\cyc}$ and $\kappa_{\nr}$; in the same way, $\kappa_{2,p}$ is a cohomology class in the one-dimensional space \[  H^1(H_p, \bar{\mathbb Q}_p(\beta/\alpha))^{G_{\mathbb Q_p}}. \]

It is clear that \[ \kappa_1|_{I_p} = 0, \quad \kappa_2|_{I_p} = \frac{\log_p(u_1)}{\log_p(u_{\alpha \otimes \bar \beta})} \kappa_{\cyc}, \qquad \kappa_4|_{I_p} = \kappa_{\cyc}. \]

Then, to determine the restriction of the cohomology classes to the whole decomposition group, we impose these three conditions:
\begin{enumerate}
\item The fact that the trace of the representation is prescribed (and is equal to $\kappa_{\cyc}$) forces that \[ \kappa_1 = \lambda \cdot \kappa_{\nr}, \qquad \kappa_4 = \kappa_{\cyc}-\lambda \cdot \kappa_{\nr}. \]
\item The fact that $H^1(H_p, \bar{\mathbb Q}_p(\beta/\alpha))^{G_{\mathbb Q_p}}$ is one-dimensional (since $\alpha \neq \beta$), makes that \[ \kappa_2 = \frac{\log_p(u_1)}{\log_p(u_{\alpha \otimes \bar \beta})} \kappa_{\cyc} \] (i.e., there is no contribution coming from the Frobenius).
\item The remaining parameter $\lambda$ can be determined by considering the associated matrix $J$ giving rise to the endomorphism $\Phi_g(x)$. This is the content of the following proposition.
\end{enumerate}

\begin{propo}
There exists a unique $\lambda$ such that the homomorphism $\Phi_g(x)$ vanishes at $\mathcal O_H[1/p]^{\times} \otimes \mathbb Q_p$.
\end{propo}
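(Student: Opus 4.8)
The plan is to show that the homomorphism $\Phi_g$, built from the matrices $A_g$ and $J=\smallmat{a}{b}{0}{c}$, descends to the correct global class $\kappa$ precisely when one single linear condition — the vanishing on $\mathcal O_H[1/p]^{\times}\otimes\mathbb Q_p$ — is imposed, and that this condition pins down $\lambda$ uniquely. First I would record what is already forced: from the discussion preceding the statement, the trace constraint gives $\kappa_1=\lambda\cdot\kappa_{\nr}$ and $\kappa_4=\kappa_{\cyc}-\lambda\cdot\kappa_{\nr}$, while $\kappa_2=\tfrac{\log_p(u_1)}{\log_p(u_{\alpha\otimes\bar\beta})}\kappa_{\cyc}$ is completely determined by the one-dimensionality of $H^1(H_p,\bar{\mathbb Q}_p(\beta/\alpha))^{G_{\mathbb Q_p}}$ together with $\kappa_3=0$. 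Hence the only remaining freedom in the local class, and therefore (by the $p$-stabilization/\'etaleness input from \cite{BeDi} that rigidifies the tangent space) in the global class, is the scalar $\lambda$ together with the entries $a,b,c$ of $J$; but $a,b,c$ are themselves linear functions of $\lambda$ once we demand that $\Phi_g$ reproduce the prescribed restrictions $\kappa_{1,p},\kappa_{2,p},\kappa_{4,p}$ to the decomposition group. So the whole family of candidate homomorphisms is parametrized by a single affine parameter $\lambda$.

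Next I would impose the vanishing $\Phi_g(v_g^{\times})=0$, equivalently $\Phi_g$ annihilates the $p$-unit $v$. Splitting $\Phi_g$ into its $\log$-part (governed by $A_g$, acting through $\ad(\rho)^{\ord}$ by the previous proposition) and its $\ord$-part (governed by $J$), evaluation at the idele attached to $v$ produces an identity in $(H_p\otimes\ad(\rho))^{G_{\mathbb Q_p}}$. Pairing against a suitable Frobenius-eigenvector and using $\ord_p(v)=1$ (recall $v$ was chosen with $p$-adic valuation one), the $\ord$-part contributes exactly one copy of the matrix $J$, while the $\log$-part contributes $\mathrm{Tr}(A_g v_g)$, already computed in \eqref{traza} to be
\[
\frac{\log_p(u_1)\log_p(v_{\alpha\otimes\bar\beta})-\log_p(u_{\alpha\otimes\bar\beta})\log_p(v_1)}{\log_p(u_{\alpha\otimes\bar\beta})}.
\]
Setting the sum to zero yields a single scalar equation in $\lambda$ whose coefficient of $\lambda$ is nonzero — this nonvanishing is where the regularity hypotheses (H1)--(H3) and, in particular, $\alpha\neq\beta$ together with the fact that $\log_p(u_{\alpha\otimes\bar\beta})\neq 0$ (guaranteed by $(\mathcal O_H^\times\otimes\ad^0(g))^{G_{\mathbb Q}}$ being one-dimensional and its projection to the $e_{\alpha\otimes\bar\beta}$-line being nonzero) enter. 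From a nonzero coefficient one reads off existence and uniqueness of $\lambda$ immediately.

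The step I expect to be the main obstacle is verifying that this single equation is genuinely the \emph{only} obstruction — i.e.\ that once $\Phi_g$ kills $v$ it automatically kills all of $\mathcal O_H[1/p]^\times\otimes\mathbb Q_p$, not just the chosen generator, and that the resulting $\Phi_g$ really does correspond under class field theory to a class $\kappa$ lying in the Bellaïche--Dimitrov tangent space (the conditions $\kappa_{3,p}=0$, $\kappa_{1,p}|_{I_p}=0$). The first half follows because $\mathcal O_H[1/p]^\times/p^{\mathbb Z}\otimes\ad^0(g))^{G_{\mathbb Q}}$ is two-dimensional with basis $\{u,v\}$, and $\Phi_g$ already vanishes on $u$ by the previous proposition, so vanishing on $v$ forces vanishing on the whole space by linearity; the second half is a bookkeeping check that the chosen shapes of $A_g$ and $J$ are upper-triangular in the Frobenius eigenbasis with the $(1,1)$-entry unramified, which is exactly how $A_g$ and $J$ were defined. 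Assembling these, $\Phi_g$ corresponds to the sought $\kappa$, and $\lambda$ — hence $\kappa_1(\Fr_p)=\lambda$ — is uniquely determined, completing the proof.
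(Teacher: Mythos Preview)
Your proposal follows essentially the same strategy as the paper: reduce the vanishing condition on $\mathcal O_H[1/p]^\times\otimes\mathbb Q_p$ to a single linear equation in $\lambda$ coming from the trace quantity $\Tr(A_g v_g)$ already computed in \eqref{traza}, and solve. The paper's write-up differs from yours mainly in how this reduction is made precise. You phrase the key step as ``impose $\Phi_g(v_g^\times)=0$'' and then argue by linearity over the basis $\{u,v\}$; but $v_g^\times$ lives in $(\mathcal O_H[1/p]^\times\otimes\ad(\rho))^{G_\Q}$, not in $\mathcal O_H[1/p]^\times$, so $\Phi_g(v_g^\times)$ is not literally defined. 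The paper instead takes an arbitrary $w\in\mathcal O_H[1/p]^\times$ and an arbitrary $B\in\ad(\rho)$, observes that $\Phi_g(w)=0$ is equivalent (by non-degeneracy of the trace pairing) to $\Tr(\Phi_g(w)\cdot B)=0$ for all $B$, and then rewrites this trace using the averaged element $w_g^\times=\sum_\sigma{}^\sigma w\otimes(\sigma\cdot B)$, which does lie in the span of $u_g^\times,v_g^\times$. This is exactly the device that makes your ``basis'' argument rigorous.

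A second minor point: you treat $a,b,c$ as free parameters depending linearly on $\lambda$, whereas the paper's analysis shows directly that the constraints already force $J=\smallmat{-\lambda}{0}{0}{\lambda}$ (so $b=0$ and $a=-c=-\lambda$); the equation $\Tr(A_g v_g)=\Tr(J\tilde v_g)$ then determines $\lambda$. Your nonvanishing check for the coefficient of $\lambda$ (via $\ord_p(v_1)\ne 0$ and $\log_p(u_{\alpha\otimes\bar\beta})\ne 0$) is a welcome addition that the paper leaves implicit.
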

\begin{proof}
Picking $w \in \mathcal O_H[1/p]^{\times}$ and an arbitrary $B \in \ad(\rho)$, it is enough to see that \[ \Tr(\Phi_g(w) \cdot B) = 0, \] due to the non-degeneracy of the $H_p$-valued trace pairing on $H_{\wp} \otimes \ad(\rho)$. Set \[ w_g^{\times} := \sum_{\sigma \in G} \, ^{\sigma} w \otimes (\sigma \cdot B) \in (\mathcal O_H^{\times} \otimes \ad(\rho))^{G_{\mathbb Q}}. \] Observe that $w_g^{\times}$ can be expressed in terms of the basis $u_g^{\times}$ and $v_g^{\times}$. In particular, let \[ w_g := (\log_p \otimes \id)(w_g^{\times}) = \lambda \cdot u_g + \mu \cdot v_g; \quad \tilde w_g := (\ord_p \otimes \id)(w_g^{\times}) = \mu \cdot \tilde v_g. \]

Then, \[ \Tr(\Phi_g(w) \cdot B) = \lambda \cdot \Tr( A_g \cdot u_g) + \mu \cdot \Tr(A_g v_g-J \cdot \tilde v_g). \] Observe that \[ J = \mat{-\lambda}{0}{0}{\lambda}, \] and from \eqref{traza}, one sees that \[ \lambda = \frac{\log_p(u_1) \cdot \log_p(v_{\alpha \otimes \bar \beta})-\log_p(v_1) \cdot \log_p(u_{\alpha \otimes \bar \beta})}{2 \cdot \log_p(u_{\alpha \otimes \bar \beta}) \cdot \ord_p(v_1)}. \]
\end{proof}

Hence, we conclude that \[ \kappa_1 =  -\frac{\log_p(u_1) \cdot \log_p(v_{\alpha \otimes \bar \beta})-\log_p(v_1) \cdot \log_p(u_{\alpha \otimes \bar \beta})}{2 \cdot \log_p(u_{\alpha \otimes \bar \beta}) \cdot \ord_p(v_1)} \cdot \kappa_{\nr}, \] and evaluating at $\Fr_p$ we finally obtain the formula we anticipated below
\begin{equation}\label{asclaimed}
a_p(g_{\alpha}') = \frac{\log_p(u_1) \cdot \log_p(v_{\alpha \otimes \bar \beta})-\log_p(v_1) \cdot \log_p(v_{\alpha \otimes \bar \beta})}{\log_p(u_{\alpha \otimes \bar \beta})} \pmod{L^{\times}},
\end{equation}
as claimed.

\section{Proof of main results}\label{proof-main}

\subsection{Proof of Theorems A and A'}
Recall that by \cite[Proposition 4.2]{DLR2}, the $p$-adic iterated integral of the statement agrees, up to multiplication by a scalar in $L^{\times}$, with the special value $L_p(\hg,\hg^*)(y_0,y_0,1)$.
Further, applying the relation between $L_p({\bf g},{\bf g}^*)(y_0,y_0,1)$ and $a_p(g_{\alpha}')$ as described in Corollary \ref{impad}, we have that \[ L_p({\bf g},{\bf g^*})(y_0,y_0,1) = L_p({\bf g},{\bf g^*})(y_0,y_0,0) = \mathcal L(\ad^0(g_\alpha)) = a_p(g_{\alpha}') \pmod{L^{\times}}. \] The derivative of the Fourier coefficient was computed in the previous section, and it follows from \eqref{asclaimed} that \begin{equation}\label{Linv} L_p({\bf g},{\bf g^*})(y_0,y_0,1) = \frac{\log_p(u_1) \cdot \log_p(v_{\alpha \otimes \bar \beta})-\log_p(v_1) \cdot \log_p(u_{\alpha \otimes \bar \beta})}{\log_p(u_{\alpha \otimes \bar \beta})} \pmod{L^{\times}}. \end{equation} This proves Theorems A and A'.

\subsection{Proof of Theorem B}\label{proof-b}
We may combine \eqref{rec-law-base} with the above result to deduce that \[ \log^{-+} (\kappa_p'(g_{\alpha},g_{1/\beta}^*)) =  \Big( \, \frac{ \log_p(u_1)\log_p(v_{\alpha \otimes \bar \beta}) - \log_p(u_{\alpha \otimes \bar \beta})\log_p(v_1)}{\log_p(u_{\alpha \otimes \bar \beta})} \, \Big)^2  \pmod{L^{\times}}. \]

Recall that in \eqref{log-+} we defined the map $\log^{-+}$ as the composition of the Perrin-Riou big logarithm of Proposition \ref{biglog} specialized at weight $(y_0,y_0,0)$ and the pairing with the class $\eta_{g_{\alpha}} \otimes \omega_{g_{1/\beta}^*}$ introduced in \S \ref{Hidafam}. These differential classes satisfy \[ \langle \eta_{g_{\alpha}}, \omega_{g_{1/\beta}^*}\rangle = \frac{1}{\lambda_N(g_{\alpha}) \mathcal E_0(g_{\alpha}) \mathcal E_1(g_{\alpha})} \in L^{\times}. \] under the perfect pairing $D(V_g) \times D(V_{g^*}) \rightarrow L_p$. We may take a decomposition of $V_g$ and $V_{g^*}$ as $G_{\mathbb Q_p}$-modules \[ V_g = L \cdot e_{\alpha}^g \oplus L \cdot e_{\beta}^g, \qquad V_{g^*}= L \cdot e_{1/\alpha}^{g^*} \oplus L \cdot e_{1/\beta}^{g^*}, \] respectively, where $\{e_{\alpha}^g, e_{\beta}^g\}$ and $\{e_{1/\alpha}^{g^*},e_{1/\beta}^{g^*}\}$ are basis of $V_g$ and $V_{g^*}$, one dual of each other, and compatible with the choice of the basis for the tensor product $V_{gg^*}$ considered at the previous section. As explained in \cite[\S 2]{DR2.5}, one may define $p$-adic periods
\begin{equation}
\Xi_{g_{\alpha}} \in H_p^{\Fr_p = \beta^{-1}}, \qquad \Omega_{g_{1/\beta}^*} \in H_p^{\Fr_p = \beta}
\end{equation}
satisfying that \[ \Xi_{g_{\alpha}} \otimes e_{\beta}^g = \eta_{g_{\alpha}}, \qquad \Omega_{g_{1/\beta}^*} \otimes e_{1/\beta}^{g^*} = \omega_{g_{1/\beta}^*}. \]  The natural pairing between $D(V_g) = (H_p \otimes V_g)^{G_{\mathbb Q_p}}$ and  $D(V_{g^*}) = (H_p \otimes V_{g^*})^{G_{\mathbb Q_p}}$ induces a duality between $V_g$ and $V_{g^*}$, and hence the quantity \[ \langle \eta_{g_{\alpha}}, \omega_{g_{1/\beta}^*} \rangle = \Xi_{g_{\alpha}} \cdot \Omega_{g_{1/\beta}^*} \cdot  \langle e_{\beta}^g, e_{1/\beta}^{g^*} \rangle =   \Xi_{g_{\alpha}} \cdot \Omega_{g_{1/\beta}^*} \] belongs to $L^{\times}$. Consequently, the class
\[ \kappa_\circ = \frac{ \log_p(u_1)\log_p(v_{\alpha \otimes \bar \beta}) - \log_p(u_{\alpha \otimes \bar \beta})\log_p(v_1)}{\log_p(u_{\alpha \otimes \bar \beta})^2} \times  \Big(  \log_p(v_{\alpha \otimes \bar \beta}) u - \log_p(u_{\alpha \otimes \bar \beta})v \Big) \] satisfies $\log^{-+}(\kappa_p'(g_{\alpha},g_{1/\beta}^*))=\log^{-+}(\res_p(\kappa_\circ))$.

We may write the cohomology class $\kappa'(g_{\alpha},g_{1/\beta}^*)$ as a linear combination \[ \kappa'(g_{\alpha},g_{1/\beta}^*) = a \cdot u + b \cdot v + c \cdot p, \] where $a,b,c \in L_p$.

The condition for an element to lie in the kernel of the map $\log^{-+}$ is \[ a \cdot \log_p(u_{\alpha \otimes \bar \beta}) + b \cdot \log_p(v_{\alpha \otimes \bar \beta}) = 0. \] Hence, we have that \[ \kappa'(g_{\alpha},g_{1/\beta}^*) = \kappa_\circ + \lambda(\log_p(v_{\alpha \otimes \bar \beta}) \cdot u - \log_p(u_{\alpha \otimes \bar \beta}) \cdot v) + \mu \cdot p. \]

Consider now the map \[ \log^{--}: H^1(\mathbb Q_p, V_{gh}(1)) \xrightarrow{\pr^{--}} H^1(\mathbb Q_p, V_{gh}^{--}(1)) \simeq H^1(\mathbb Q_p, \Q_p(\alpha/\beta)(1)) \xrightarrow{\log_{\BK}} L_p. \]

According to \eqref{menos}, the class $\kappa_p'(g_{\alpha},g_{1/\beta}^*)$ also lies in the kernel of $\log^{--}$. Hence, taking equalities up to periods, \[ \log^{--}(\kappa_p'(g_{\alpha},g_{1/\beta}^*)) = \log^{--}(\res_p(\kappa_\circ)) + \lambda(\log_p(v_{\alpha \otimes \bar \beta}) \cdot \log_p(u_1) - \log_p(u_{\alpha \otimes \bar \beta}) \cdot \log_p(v_1)) \] \[ = \lambda \cdot (\log_p(v_{\alpha \otimes \bar \beta}) \cdot \log_p(u_1) - \log_p(u_{\alpha \otimes \bar \beta}) \cdot \log_p(v_1)) = \lambda \cdot \log_p(u_{\alpha \otimes \bar \beta}) \cdot L_p({\bf g},{\bf g^*})(y_0,y_0,1) = 0. \]

The assumption $\mathcal L(\ad^0(g_{\alpha})) \neq 0$ in Theorem B implies  that $L_p({\bf g},{\bf g^*})(y_0,y_0,1) \neq 0$ by the first display in this section. Hence $\lambda=0$ and Theorem B follows.

\subsection{Non-vanishing of the $\cL$-invariant}\label{sec:L}

We conclude this article by noting that the non-vanishing of the $\cL$-invariant can be proved in most dihedral cases, because the expression \eqref{Linv} simplifies considerably. Indeed, let $K$ be a real or imaginary quadratic field of discriminant $D$ and let $\psi: G_K \lra L^\times$ be a finite order character of conductor $\mathfrak c\subset \mathcal O_K$ (and of mixed signature at the two archimedean places if $K$ is real). Then the theta series $g=\theta(\psi)$ attached to $\psi$ is an eigenform of weight $1$, level $N_g = |D|\cdot \mathbf{N}_{K/\Q}(\mathfrak c)$ and nebentype $\chi_g=\chi_K \chi_\psi$, where $\chi_K$ is the quadratic character associated to $K/\Q$ and $\chi_\psi$ is the central character of $\psi$.

Let $\psi'$ denote the $\Gal(K/\Q)$-conjugate of $\psi$ defined by the rule $\psi'(\sigma) = \psi(\sigma_0 \sigma \sigma_0^{-1})$ for any choice of $\sigma_0\in \G_{\Q}\setminus G_K$. If $\psi\ne \psi'$ then $g$ is cuspidal.

Let $p\nmid N_g$ be a prime number and fix an embedding $\bar\Q \subset \bar\Q_p$. In line with the introduction, suppose that hypotheses (H1-H2-H3) are fulfilled. This amounts to asking that
\begin{enumerate}

\item[(i)] $\psi \ne \psi' \, (\mathrm{mod} \, p)$, so that $g$ is cuspidal even residually at $p$.

\item[(ii)] If $K$ is imaginary and $p=\wp \bar\wp$ splits in $K$, then $\psi(\wp)\ne \psi'(\wp) \, (\mathrm{mod} \, p)$.

\item[(iii)] If $K$ is real, $p$ does not split. If $K$ is imaginary and the field $H=\bar K^{\mathrm{ker}(\psi)}$ cut out by $\psi$ has Galois group $\Gal(H/\Q)=D_4$, the dihedral group of order $8$, then $p$ does not split in the single real quadratic  field contained in $H$.
\end{enumerate}

\begin{proposition}\label{Linv-simple} If $K$ is imaginary with $p$ split, or $K$ is real with $p$ inert, then
\begin{equation}
\mathcal L(\ad^0(g_\alpha)) = \log_p(v_1) \quad  \pmod{L^{\times}}.
\end{equation}
In particular $\mathcal L(\ad^0(g_\alpha)) \ne 0$.
\end{proposition}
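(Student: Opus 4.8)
The plan is to specialize the formula \eqref{Linv} already established in Theorem A' to the dihedral situation, where the $\wp$-units of $H$ and their local behaviour at $p$ are governed by the decomposition of $G_{\Q}$-modules $\mathrm{ad}^0(g) = L(\chi_K)\oplus \mathrm{Ind}_K^{\Q}(\psi/\psi')$. The crucial point will be that in the two cases under consideration the term $\log_p(u_1)$ occurring in \eqref{Linv} vanishes, so that \eqref{Linv} collapses to $\mathcal L(\ad^0(g_\alpha))\equiv \log_p(v_1)\pmod{L^\times}$, after which non-vanishing becomes a question about a single $p$-adic logarithm.

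First I would make the $G_{\Q_p}$-structure explicit. If $p$ splits in $K$, then $\chi_K|_{G_{\Q_p}}$ is trivial, so the canonical line $L\cdot e_1$ (on which $\Fr_p$ acts trivially) is exactly $L(\chi_K)$, whereas $\mathrm{Ind}_K^{\Q}(\psi/\psi')|_{G_{\Q_p}} = L^{\alpha/\beta}\oplus L^{\beta/\alpha}$ with $\alpha/\beta\neq 1$ (as $\psi(\wp)\neq \psi(\bar\wp)$), these two summands being $L\cdot e_{\alpha\otimes\bar\beta}$ and $L\cdot e_{\beta\otimes\bar\alpha}$. If $p$ is inert in $K$, then $a_p(g)=0$, hence $\alpha=-\beta$ and $\alpha/\beta=\beta/\alpha=-1$ (the situation of the footnote to \eqref{gh-decom}); here $\chi_K|_{G_{\Q_p}}$ is the unramified quadratic character, so $L(\chi_K)$ sits in the $(-1)$-eigenspace of $\Fr_p$, i.e.\ inside $L\cdot e_{\alpha\otimes\bar\beta}\oplus L\cdot e_{\beta\otimes\bar\alpha}$, while $L\cdot e_1$ is the trivial line of $\mathrm{Ind}_K^{\Q}(\psi/\psi')|_{G_{\Q_p}}=\mathbf{1}\oplus\chi_K$. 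Next I would locate the global unit $u$ via Dirichlet's unit theorem. When $K$ is imaginary, $\mathcal O_K^{\times}$ is torsion, so $(\mathcal O_H^{\times}\otimes L(\chi_K))^{G_{\Q}}=(\mathcal O_K^{\times}\otimes L(\chi_K))^{\mathrm{Gal}(K/\Q)}=0$ and the one-dimensional space $(\mathcal O_H^{\times}\otimes\mathrm{ad}^0(g))^{G_{\Q}}$ lies in the $\mathrm{Ind}_K^{\Q}(\psi/\psi')$-part; when $K$ is real it is spanned by the image of the fundamental unit $\varepsilon_K$, hence lies in the $\chi_K$-part, and the $\mathrm{Ind}_K^{\Q}(\psi/\psi')$-part of the global units vanishes. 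By the previous paragraph, in both cases $u$ lands, upon restriction to $G_{\Q_p}$, in $H_p^{\times}\otimes(L\cdot e_{\alpha\otimes\bar\beta}\oplus L\cdot e_{\beta\otimes\bar\alpha})$, so $u_1=0$. (That $u_{\alpha\otimes\bar\beta}\neq 0$ and $\log_p(u_{\alpha\otimes\bar\beta})\neq 0$, needed for \eqref{Linv} to be meaningful, is part of the proof of Theorem A'.) Feeding $u_1=0$ into \eqref{Linv} yields $\mathcal L(\ad^0(g_\alpha))\equiv \log_p(v_1)\pmod{L^\times}$.

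It remains to prove $\log_p(v_1)\neq 0$. A $\wp$-unit analogue of the previous count — using $\mathcal O_H[1/p]^{\times}\otimes\Q\cong(\mathcal O_H^{\times}\otimes\Q)\oplus\Q[\{\mathfrak P\mid p\}]$ together with the fact that the $\mathrm{ad}^0(g)$-multiplicity of $\Q[\{\mathfrak P\mid p\}]=\Q[G/D_p]$ equals $\dim\mathrm{ad}^0(g)^{D_p}=1$, concentrated on the $e_1$-line — shows that the class $v$ completing $u$ to a basis projects non-trivially onto $L\cdot e_1$, with $\ord_p(v_1)=1$. When $K$ is imaginary and $p=\wp\bar\wp$ splits, the $e_1=\chi_K$-isotypic part of $\mathcal O_H[1/p]^{\times}$ descends to $\mathcal O_K[1/p]^{\times}$, and one may take $v_1$ to be the image in $K_\wp^{\times}=\Q_p^{\times}$ of a $1/h_K$-th power of $\pi/\bar\pi$, where $(\pi)=\wp^{h_K}$; since $\pi\bar\pi\in p^{h_K}\mu$, one gets $\log_p(v_1)\equiv\log_p(\pi)\pmod{L^\times}$, which is nonzero because $\wp^{h_K}$ is not a power of $(p)$, so that $\pi/p^{h_K}\in\Z_p^{\times}$ is not a root of unity and is not killed by the Iwasawa logarithm (which is injective modulo torsion on $\Z_p^{\times}$). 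When $K$ is real and $p$ is inert, the identity $\psi(\Fr_{\mathfrak p})=\psi'(\Fr_{\mathfrak p})$ (Frobenius at $\mathfrak p$ is conjugation-invariant) gives $(\psi/\psi')(\Fr_{\mathfrak p})=1$, so $\mathfrak p$ splits completely in $H/K$ and there are at least two primes of $H$ above $p$; hence a $\wp$-unit generating a power of a single one of them cannot be a power of $p$ times a root of unity, and $\log_p(v_1)\neq 0$ follows once $v_1$ is identified, up to $L^{\times}$, with the image at $p$ of such a $\wp$-unit lying in the $\psi/\psi'$-isotypic component of $\mathcal O_H[1/p]^{\times}$. \textbf{This last identification, and the accompanying non-vanishing, is the main obstacle:} unlike the imaginary case it is not a mere valuation count, since $v_1$ is a priori an $L$-linear combination of $p$-adic logarithms of Galois-conjugate $\wp$-units, and one must use the explicit shape of $v$ together with a non-vanishing input of Ferrero--Greenberg type for the $p$-adic $L$-function $L_p(\psi/\psi',s)$ of $K$ at its trivial zero.
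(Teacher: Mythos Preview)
Your reduction to the formula \eqref{Linv} and the identification of which term vanishes is correct and matches the paper's approach in spirit, with one cosmetic difference in the imaginary split case: you kill the numerator via $\log_p(u_1)=0$ (since $u$ lives in the $\mathrm{Ind}_K^{\Q}(\psi/\psi')$-summand, which locally misses the $e_1$-line), whereas the paper kills it via $\log_p(v_{\alpha\otimes\bar\beta})=0$ (since $v$ may be taken in the $\chi_K$-summand, which locally \emph{is} the $e_1$-line). Both are valid and lead to the same simplification; the determinant in \eqref{Linv} is invariant under $v\mapsto v+c\cdot u$, so neither argument depends on a particular normalisation of $v$. In the real inert case your argument (``$u$ lies in the $\chi_K$-part, which sits in the $\Fr_p=-1$ eigenspace'') is actually a cleaner way of saying what the paper says (``$u_1$ is the norm of the fundamental unit''): both yield $\log_p(u_1)=0$. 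One small notational quibble: since $u_1\in H_p^{\times}$, ``$u_1=0$'' should read ``$u_1$ is trivial'' or ``$u_1\in\mu$''; the conclusion $\log_p(u_1)=0$ is of course what you mean.

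On the non-vanishing: you are right that the imaginary split case is elementary (your valuation argument is fine), and you are also right that the real inert case is more subtle. However, you are working harder than the paper does here. The paper's proof consists of the two one-line computations giving the formula and then simply asserts the ``In particular''; it does not spell out why $\log_p(v_1)\ne 0$ in the real case. As you observe at the end, this amounts to the non-vanishing of a Gross-type $p$-adic regulator for the totally odd ring class character $\psi_{\mathrm{ad}}$, equivalently (via the discussion in the paper's \S\ref{sec:DD}) the non-vanishing of $L_p'(\psi_{\mathrm{ad}}\omega,0)$. So the ``main obstacle'' you flag is genuine but is not something the paper resolves within the proof of this proposition; it is treated as known background. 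Your proposal would therefore be complete for the purposes of matching the paper if you simply stopped after establishing $\mathcal L(\ad^0(g_\alpha))\equiv\log_p(v_1)$ and referred to the Gross--Stark circle of results for the non-vanishing, rather than trying to give a self-contained argument.
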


\begin{proof}
If $K$ is real (and thus $p$ remains inert in it) then $u_1$ is the norm of the fundamental unit of $K$ and hence its $p$-adic logarithm vanishes. If $K$ is imaginary and $p$ splits, then $v$ is a $p$-unit in $K^\times$ and hence $\log_p(v_{\alpha \otimes \bar \beta})=0$.
\end{proof}

Note that Theorem B simplifies considerably in the setting of the above proposition. The simplest scenario where it is not a priori obvious that the $\cL$-invariant is non-zero arises when $K$ is imaginary but $p$ remains inert. Fix a prime $\wp$ in $H$ above $p$ and set $\psi_{\mathrm{ad}} = \psi/\psi'$. Note that  $\psi_{\mathrm{ad}}$ is a ring class character, regardless of whether $\psi$ is so or not. Let $u_{\psi_{\mathrm{ad}}}$ (resp.\,$v_{\psi_{\mathrm{ad}}}$) denote any element spanning the $1$-dimensional $L$-vector space
$$
\cO_H^\times[\psi_{\mathrm{ad}}] := \{ x\in \cO_H^\times \otimes L: \sigma(x)=\psi_{\mathrm{ad}}(\sigma)x, \quad \sigma \in \Gal(H/K) \},
$$
respectively $\frac{\cO_H[1/\wp]^\times[\psi_{\mathrm{ad}}]}{\cO_H^\times[\psi_{\mathrm{ad}}]}$. Set also $u'_{\psi_{\mathrm{ad}}} = \Frob_{\wp} u_{\psi_{\mathrm{ad}}}$ and $v'_{\psi_{\mathrm{ad}}} = \Frob_{\wp} v_{\psi_{\mathrm{ad}}}$. Then it is a straight-forward computation to check that the $\cL$-invariant appearing in Theorems A and A' is
\begin{equation}
\mathcal L(\ad^0(g_\alpha)) = \frac{\log_{\wp}(u_{\psi_{\mathrm{ad}}})\log_{\wp}(v'_{\psi_{\mathrm{ad}}}) - \log_{\wp}(u'_{\psi_{\mathrm{ad}}})\log_{\wp}(v_{\psi_{\mathrm{ad}}})}{\log_{\wp}(u_{\psi_{\mathrm{ad}}}) - \log_{\wp}(u'_{\psi_{\mathrm{ad}}})}  \quad  \pmod{L^{\times}}.
\end{equation}

\section{Darmon--Dasgupta units and factorization of $p$-adic $L$-functions}\label{sec:DD}

\subsection{Darmon--Dasgupta units and Gross' conjecture}

Let us place ourselves again in the setting of \S \ref{sec:L}, where $K$ is real and $p$ remains inert in it. In this scenario Darmon and Dasgupta \cite{DD} associated to the ring class character $\psi_{\mathrm{ad}}$ a local unit $v_{\Dd}[\psi_{\mathrm{ad}}] \in K_p^\times$ and conjectured that $v_{\Dd}[\psi_{\mathrm{ad}}]$ actually belongs to $\cO[1/\wp]^\times[\psi_{\mathrm{ad}}]$. The combination of \cite[Theorem 4.4]{Park} and Darmon--Dasgupta-Pollack's \cite[Theorem 2]{DDP}  provides strong evidence for this conjecture, as putting  these results together it follows that, in our notations,
\begin{equation}\label{DDPark}
\log_{p}(\mathbf{N}_{K_p/\Q_p} (v_{\Dd}[\psi_{\mathrm{ad}}])) = \log_p(v_1) \quad  \pmod{L^{\times}}.
\end{equation}

The above equality together with Theorems B and C yield a formula relating the derived Beilinson--Flach elements of this note with Darmon--Dasgupta units, much in the spirit of \cite[Theorem A]{BSV} and \cite[Theorem C]{DR3} for diagonal cycles versus Stark-Heegner points. Taking into account the decomposition introduced in \eqref{decom-uv}, the element $\kappa'(g_{\alpha},g_{1/\beta}^*)_1$ belongs to $\mathbb Q_p^{\times} \otimes L_p$.

\begin{coro}\label{coro-DD} Let $g=\theta(\psi)$ be the theta series associated to a finite order character $\psi$ of mixed signature of a real quadratic field $K$. Let $p$ be a prime that remains inert in $K$. Then, \[ \kappa'(g_\alpha,g^*_{1/\beta})_1 = \log_{p}(\mathbf{N}_{K_p/\Q_p} (v_{\Dd}[\psi_{\mathrm{ad}}])) \cdot \mathbf{N}_{K_p/\Q_p} (v_{\Dd}[\psi_{\mathrm{ad}}]) \pmod{L^{\times}}. \]
\end{coro}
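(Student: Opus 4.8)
The plan is to combine the two main threads of the paper — the explicit formula for the derived Beilinson--Flach class in terms of units (Theorem B, together with Theorem C) and the known analytic evaluation of the Darmon--Dasgupta unit via the Darmon--Dasgupta--Pollack and Park formulae recorded in \eqref{DDPark} — and simply match them in the $1$-component of the decomposition \eqref{decom-uv}.

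First I would specialize Theorem B to the dihedral situation of \S \ref{sec:L}: since $g=\theta(\psi)$ with $K$ real and $p$ inert, Proposition \ref{Linv-simple} gives $\mathcal L(\ad^0(g_\alpha)) = \log_p(v_1) \pmod{L^\times}$, and in particular this is non-zero, so the hypothesis of Theorem B is satisfied. Projecting the identity \eqref{main2} onto the $\ad^0(g)^1$-line $L\cdot e_1$ — equivalently, taking the $1$-component of both sides, which lands in $\Q_p^\times \otimes L_p$ as noted just before the corollary — kills the $\ad^0(g)^{\alpha\otimes\bar\beta}$ and $\ad^0(g)^{\beta\otimes\bar\alpha}$ contributions of $u$ and $v$ and leaves, modulo $L^\times$,
\[
\kappa'(g_\alpha,g^*_{1/\beta})_1 = \log_p(v_1) \cdot \frac{\log_p(v_{\alpha\otimes\bar\beta})\,u_1 - \log_p(u_{\alpha\otimes\bar\beta})\,v_1}{\log_p(u_{\alpha\otimes\bar\beta})}.
\]
Now in the real quadratic, $p$-inert case $u_1$ is (a power of) the norm to $\Q$ of the fundamental unit of $K$, so $\log_p(u_1)=0$ and $u_1$ is trivial in $\Q_p^\times\otimes L_p$; hence the bracket collapses to $-\log_p(u_{\alpha\otimes\bar\beta})\,v_1/\log_p(u_{\alpha\otimes\bar\beta}) = -v_1$ up to $L^\times$. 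This yields $\kappa'(g_\alpha,g^*_{1/\beta})_1 = \log_p(v_1)\cdot v_1 \pmod{L^\times}$.

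The final step is to identify $v_1$ with $\mathbf{N}_{K_p/\Q_p}(v_{\Dd}[\psi_{\mathrm{ad}}])$, both as an element of $\Q_p^\times\otimes L_p$ and through its logarithm. By construction $v$ spans (together with $u$) the rank-two space $(\cO_H[1/p]^\times/p^\Z\otimes\ad^0(g))^{G_\Q}$, and its $1$-component $v_1\in\Q_p^\times$ is the image under the norm/corestriction from $H$ to $\Q$ of the $\psi_{\mathrm{ad}}$-isotypic $\wp$-unit; unwinding the dihedral induction $\ad^0(g) = L(\chi_K)\oplus \mathrm{Ind}_K^\Q(\psi_{\mathrm{ad}})$ (here $L(\chi_K)$ is the piece that is unramified everywhere and contributes nothing, and $\Ind_K^\Q(\psi_{\mathrm{ad}})$ carries the two $p$-units), one checks that this norm is precisely $\mathbf{N}_{K_p/\Q_p}$ applied to a generator of $\cO_H[1/\wp]^\times[\psi_{\mathrm{ad}}]$, i.e.\ to $v_{\Dd}[\psi_{\mathrm{ad}}]$ granting the Darmon--Dasgupta conjecture — but one does not need the conjecture itself, only \eqref{DDPark}, which gives $\log_p(\mathbf{N}_{K_p/\Q_p}(v_{\Dd}[\psi_{\mathrm{ad}}])) = \log_p(v_1)\pmod{L^\times}$, together with the fact that both $v_1$ and $\mathbf{N}_{K_p/\Q_p}(v_{\Dd}[\psi_{\mathrm{ad}}])$ span the same one-dimensional $\Q_p$-line (up to $L^\times$ and up to $p^\Z$, which is absorbed since we are in $\frac{\cO_H[1/p]^\times}{p^\Z}$). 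Substituting these two identifications into $\kappa'(g_\alpha,g^*_{1/\beta})_1 = \log_p(v_1)\cdot v_1$ gives the stated formula.

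The main obstacle I anticipate is not any of the individual matchings — each is a short computation — but the bookkeeping of the several normalizations that are only specified "$\pmod{L^\times}$" and, more delicately, "$\pmod{p^\Z}$": one must be sure that the class $v$ appearing in Theorem B (normalized so that $\ord_p(v_1)=1$) and the local unit $v_{\Dd}[\psi_{\mathrm{ad}}]$ of Darmon--Dasgupta are genuinely proportional, not merely after applying $\log_p$, so that multiplying the logarithm by the unit itself is legitimate. This requires checking that the valuation $\ord_p(\mathbf{N}_{K_p/\Q_p}(v_{\Dd}[\psi_{\mathrm{ad}}]))$ is nonzero (it is, since $v_{\Dd}[\psi_{\mathrm{ad}}]$ is a genuine $\wp$-unit and not a global unit), which pins down the line up to the $p^\Z$ ambiguity that has already been quotiented out; once this is in place the proof is a direct substitution, so I would present it as a short corollary rather than an independent argument.
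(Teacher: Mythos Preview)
Your proposal is correct and follows precisely the route the paper indicates: the corollary is not given a detailed proof in the paper, only the sentence ``The above equality together with Theorems B and C yield a formula\ldots'', and your unpacking --- specialize Theorem B via Proposition \ref{Linv-simple}, project to the $e_1$-line, use $\log_p(u_1)=0$ to collapse the bracket to $\log_p(v_1)\cdot v_1$, then invoke \eqref{DDPark} --- is exactly what that sentence means. Your caution about the $p^{\Z}$ ambiguity is well-placed and in fact resolves cleanly: Theorem B lives in $\frac{\cO_H[1/p]^\times}{p^{\Z}}[\ad^0(g)]\otimes L_p$, so after projection the $1$-component sits in $\Q_p^\times/p^{\Z}\otimes L_p$, on which $\log_p$ is injective; hence the equality of logarithms in \eqref{DDPark} already forces the equality of the elements $\pmod{L^\times}$, and no separate comparison of valuations is needed.
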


In spite of the ostensible parallelism between the above formula and \cite[Theorem A]{BSV} and \cite[Theorem C]{DR3}, note that the proof of Corollary \ref{coro-DD} follows quite a different route from \cite{BSV} and \cite{DR3}. The main reason is that in the latter two references it was crucially exploited a factorization of $p$-adic $L$-functions, which follows  from a comparison of critical values.

In our setting here one still expects to have an analogous factorization, but proving it appears to be far less trivial. Since this issue poses intriguing questions, and our results shed some light on them, we discuss it in more detail below.

\subsection{Artin $p$-adic $L$-functions}

Let $g=\theta(\psi)\in S_1(N,\chi)$ be a theta series of a quadratic field $K$ and $p$ be a prime which is split (resp.\,inert) if $K$ is imaginary (resp.\,real). Keep  the assumptions of Proposition \ref{Linv-simple}, and set  $h=g^*=\theta(\psi^{-1})$ and $\psi_{\mathrm{ad}} = \psi/\psi'$ as usual.

Let
$L_p(g_\alpha,g^*_{1/\beta},s)$ denote the cyclotomic $p$-adic Rankin-Hida $L$-function associated to the pair $(g_\alpha,g^*_{1/\beta})$ of $p$-stabilizations of $g$ and $h$, as in \S \ref{HidaRankin}. Note that this $p$-adic $L$-function has no critical points.
 Nevertheless, since
\begin{equation}\label{rep-decom}
V_{g}\otimes V_{g^*} \simeq 1 \oplus \chi_K \oplus \mathrm{Ind}^K_{\Q}(\psi_{\mathrm{ad}}),
\end{equation}
one may still wonder whether $L_p(g_\alpha,g^*_{1/\beta},s)$ admits a factorization mirroring the one satisfied by its classical counterpart:
\begin{equation}\label{L-decom}
L(g,h,s) = \zeta(s) \cdot L(\chi_K,s) \cdot L(K,\psi_{\mathrm{ad}},s).
\end{equation}
While \eqref{L-decom} follows directly from \eqref{rep-decom} by Artin formalism, a putative analogous factorization of $L_p(g_\alpha,g^*_{1/\beta},s)$ is far less trivial. In the CM case one can prove it by a method which is nowadays standard, but rather deep as one needs firstly to extend $L_p(g_\alpha,g^*_{1/\beta},s)$ to a two-variable $p$-adic $L$-function, prove a factorization in this scenario, and then invoke Gross's theorem \cite{Gr}. Since we did not find it in the literature and the output is not precisely what one would na\"ively expect\footnote{Indeed, the formula $L_p(g,g^*,s) = \mathfrak f(s) \cdot \zeta_p(s) \cdot L_p(\chi_K \omega, s) \cdot L_p^{\Katz}(\psi_{\ad} \cdot \mathbb N^{s})$ is not correct, in spite of being the direct analogue of \eqref{L-decom}. For one thing, this formula would enter in contradiction with Theorem A.}, we provide it below.

As a piece of notation, we say that an element $\mathfrak f$ in a finite algebra over $\Lambda^{\otimes n}$ for some $n\geq 0$, is an {\em $L$-rational fudge factor} if it is a rational function with coefficients in $L$  which extends to an Iwasawa function with neither poles nor zeroes at crystalline classical points.  Let also $L_p^{\Katz}$ denote Katz's $p$-adic $L$-function on the space of  Hecke characters of an imaginary quadratic field. If $\xi$ is one such a character of $K$, we write $ L_p^{\Katz}(\xi,s) =  L_p^{\Katz}(\xi \cdot \mathbb N^{s})$ where  $\mathbb N$ stands for the Hecke character of infinity type $(1,1)$ induced by the norm from $K$ to $\mathbb Q$.




\begin{theorem}\label{factorizationCM-in-zetas} Assume $K$ is imaginary and $p$ splits in it.
Then there exists an $L$-rational fudge factor $\mathfrak f\in \Lambda$ such that
\begin{equation*}
L_p(g,g^*,s) = \frac{\mathfrak f(s)}{\log_p(u_{\psi_{\ad}})} \cdot \zeta_p(s) \cdot L_p(\chi_K \omega, s) \cdot L_p^{\Katz}(\psi_{\ad},s).
\end{equation*}
\end{theorem}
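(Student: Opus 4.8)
The plan is to reduce the statement to a factorization of the \emph{three}-variable Rankin--Hida $p$-adic $L$-function $L_p(\hg,\hg^*)$ into a product of Katz $p$-adic $L$-functions, and then to degenerate to the weight-one fibre $y=z=y_0$, invoking Gross's theorem to identify the resulting transcendental normalization factor. The reason one cannot argue directly with $L_p(g,g^*,s)=L_p(\hg,\hg^*)(y_0,y_0,s)$ is that, the weight of $g$ being $1$, this one-variable function has no point in the critical range of Theorem~\ref{Hida-3var}; one must therefore work over the whole of $\cW_\hg\times\cW_\hg\times\cW$, where critical points abound, and specialize only at the end.

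First I would record that, since $p$ splits in $K$ and $g=\theta(\psi)$ is $p$-ordinary, the Hida family $\hg$ through $g_\alpha$ is of CM type: it is the theta series $\theta(\Psi)$ of a $\Lambda$-adic Hecke character $\Psi$ of $K$ interpolating $\psi$ and its twists by integral powers of the character $\mathbb A_K^\times\to \mathcal O_{K,\wp}^\times\to \Lambda^\times$, and likewise $\hg^*$ is the theta series of the conjugate-inverse family; this is standard from the theory of CM Hida families. Hence for classical points $(y,z)$ of weights $(\ell,m)$ with $\ell,m\geq 2$ the specializations $\hg_y,\hg^*_z$ are theta series $\theta(\psi_y),\theta(\psi^*_z)$ of Hecke characters of $K$, and the interpolation formula of Theorem~\ref{Hida-3var} expresses $L_p(\hg,\hg^*)(y,z,s)$ in terms of the Rankin value $L(\theta(\psi_y),\theta(\psi^*_z),s)$. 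Because both theta series have CM by the same $K$, Artin formalism gives
\[
L(\theta(\psi_y),\theta(\psi^*_z),s)=L_K(\psi_y\psi^*_z,s)\cdot L_K(\psi_y(\psi^*_z)',s),
\]
and as $(\ell,m,s)$ range over the critical locus the two families of Hecke characters of $K$ occurring on the right sweep out a Zariski-dense set of points in the domain of Katz's two-variable $p$-adic $L$-function $L_p^{\Katz}$ of $K$. Comparing interpolation properties, and using that two meromorphic Iwasawa functions on $\cW_\hg\times\cW_\hg\times\cW$ agreeing on such a set coincide, I would obtain an identity
\[
L_p(\hg,\hg^*)(y,z,s)=\mathfrak f_3(y,z,s)\cdot L_p^{\Katz}\bigl(\psi_y\psi^*_z,s\bigr)\cdot L_p^{\Katz}\bigl(\psi_y(\psi^*_z)',s\bigr),
\]
where $\mathfrak f_3$ is an explicit rational function in the three variables built from the constant $C$ and the Petersson norms $\langle \theta(\psi_y),\theta(\psi_y)\rangle$ of Theorem~\ref{Hida-3var}, the CM periods $\Omega_p,\Omega_\infty$ entering Katz's interpolation formula, the two sets of $p$-adic Euler-type multipliers, and the local fudge factors at primes dividing $N$; at classical points in the critical Rankin range these ingredients are all algebraic, so $\mathfrak f_3$ is $L$-rational there.

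Next I would specialize to the diagonal weight-one point $y=z=y_0$, where $\psi_{y_0}=\psi$ and $\psi^*_{y_0}=\psi^{-1}$, so that $\psi_{y_0}\psi^*_{y_0}=\mathbf 1$ and $\psi_{y_0}(\psi^*_{y_0})'=\psi/\psi'=\psi_{\mathrm{ad}}$. The first Katz factor becomes $L_p^{\Katz}$ of the trivial Hecke character of $K$, which by the factorization of Katz's $p$-adic $L$-function for characters induced from $\Q$ equals $\zeta_p(s)\cdot L_p(\chi_K\omega,s)$ up to an $L$-rational fudge factor; the second becomes $L_p^{\Katz}(\psi_{\mathrm{ad}},s)$, with $\psi_{\mathrm{ad}}$ a finite-order ring class character. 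This already exhibits the shape of the stated formula. The delicate point, and the one that makes the naive factorization of the footnote fail, is the behaviour of $\mathfrak f_3$ along $y=z=y_0$: both Katz factors there are evaluated at Hecke characters of infinity type $(0,0)$, which lie at the boundary of the region of interpolation, so $\mathfrak f_3$ does not simply restrict to an $L$-rational function. Carrying out the limit requires the $p$-adic Kronecker limit formula of Katz in the sharp form proved by Gross \cite{Gr} (refined in \cite{DDP} and \cite{Park}), which expresses the relevant Katz $p$-adic $L$-value of the anticyclotomic character $\psi_{\mathrm{ad}}$ through $\log_p$ of the elliptic unit $u_{\psi_{\mathrm{ad}}}$ spanning $\cO_H^\times[\psi_{\mathrm{ad}}]$; the Petersson-norm to CM-period discrepancy built into $\mathfrak f_3$ degenerates at weight one to exactly the reciprocal $\log_p(u_{\psi_{\mathrm{ad}}})^{-1}$. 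Collecting the remaining algebraic contributions into an $L$-rational fudge factor $\mathfrak f\in\Lambda$, with neither poles nor zeroes at crystalline classical points, yields the asserted formula.

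The main obstacle is precisely this last step: pinning down how the period normalization of Hida's Rankin $L$-function degenerates relative to the normalization of the product of Katz $L$-functions at the weight-one fibre, and proving that the discrepancy is $\log_p(u_{\psi_{\mathrm{ad}}})^{-1}$ rather than, say, a higher power or the naive constant $1$. As noted in the footnote, the naive factorization contradicts Theorem~A; conversely, consistency with Theorems~A and~A$'$, together with Dasgupta's factorization $L_p(\hg,\hg^*)(y_0,y_0,s)=\zeta_p(s)\,L_p(\ad^0(g_\alpha),s)$ of \cite{Das3} and the identity $\mathcal L(\ad^0(g_\alpha))=\log_p(v_1)$ of Proposition~\ref{Linv-simple}, both guides the determination of $\mathfrak f$ and provides a stringent check on the argument.
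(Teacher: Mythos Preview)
Your overall strategy is the same as the paper's: deform to higher weight so that the critical range is non-empty, use Artin formalism on the classical Rankin $L$-values to express them as a product of Hecke $L$-values of $K$, interpolate both sides as Katz $p$-adic $L$-functions, and then specialize back to weight one, invoking Gross's factorization $L_p^{\Katz}(\mathbb N^s)=\zeta_p(s)\,L_p(\chi_K\omega,s)$ for the trivial piece. The paper does exactly this, though more economically: it deforms only $g$ in its Hida family while keeping $h=g^*$ fixed at weight one, so the factorization lives on the two-variable space $(\ell,s)$ rather than your three-variable one.

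There is, however, a genuine gap in your treatment of the fudge factor $\mathfrak f_3$ and the origin of $\log_p(u_{\psi_{\ad}})^{-1}$. You assert that $\mathfrak f_3$ is ``built from'' the Petersson norms $\langle\theta(\psi_y),\theta(\psi_y)\rangle$ and the CM periods $\Omega_p,\Omega_\infty$, and that ``at classical points in the critical Rankin range these ingredients are all algebraic''. They are not: neither the Petersson norm nor the complex CM period is algebraic individually. What is true is that, for a CM form $g_\ell=\theta(\psi_{g,\ell-1})$, the Petersson norm is an algebraic multiple of the Hecke $L$-value $L(\psi_{g,\ell-1}/\psi_{g,\ell-1}',1)$, which after period normalization is interpolated by a \emph{third} Katz $p$-adic $L$-function $L_p^{\Katz}(\Psi_g(\ell))$. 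The paper therefore writes the factorization with this extra Katz factor on the left,
\[
L_p(\hg,h)(\ell,s)\cdot L_p^{\Katz}(\Psi_g(\ell))=\mathfrak f(\ell,s)\cdot L_p^{\Katz}(\Psi_{gh}(\ell,s))\cdot L_p^{\Katz}(\Psi_{gh'}(\ell,s)),
\]
with $\mathfrak f$ now genuinely $L$-rational. Specializing to $\ell=1$, the left-hand Katz factor becomes $L_p^{\Katz}$ at the anticyclotomic character $\psi_{\ad}$, and Katz's $p$-adic Kronecker limit formula gives $L_p^{\Katz}(\Psi_g(1))=\log_p(u_{\psi_{\ad}})$ up to $L^\times$; dividing through produces the $\log_p(u_{\psi_{\ad}})^{-1}$ in the theorem. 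Your description of this as a ``degeneration'' of an otherwise $L$-rational $\mathfrak f_3$ is inconsistent: an $L$-rational Iwasawa function cannot specialize to a $p$-adic transcendental. The missing idea is precisely to isolate this third Katz factor from the outset.

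Two minor corrections: the $p$-adic Kronecker limit formula is due to Katz, not Gross; Gross's theorem in \cite{Gr} is the factorization of $L_p^{\Katz}(\mathbb N^s)$ you use for the trivial character. And \cite{DDP}, \cite{Park} concern the real quadratic (Gross--Stark) setting and play no role here.
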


\begin{proof} We follow the notations and normalizations adopted in \cite[\S 3]{DLR1} and \cite[\S 4]{DLR2}. Fix a prime $\wp$ of $K$ above $p$. Take a Hecke character $\lambda$ with image in $\mathbb Z_p^{\times}$ of infinity type $(0,1)$ and conductor $\bar{\wp}$. For every integer $\ell \geq 1$ define $\psi_{g,\ell-1}^{(p)} = \psi_g \langle \lambda \rangle^{\ell-1}$ and let $\psi_{g,\ell-1}$ be the Hecke character given by \[ \psi_{g,\ell-1}(\mathfrak q) = \begin{cases} \psi_{g,\ell-1}^{(p)}(\mathfrak q) & \text{ if } \mathfrak q \neq \bar{\wp}, \\ \chi(p)p^{\ell-1}/\psi_{g,\ell-1}^{(p)}(\wp) & \text{ if } \mathfrak q = \bar{\wp}. \end{cases} \]
As explained in loc.\,cit.\,there is a $p$-adic family $\psi_{\hg}$ of Hecke characters whose weight $\ell$ specialization is $\psi^{(p)}_{g,\ell-1}$ and such that the Hida family $\hg$ passing through $g_\alpha$ satisfies $g_\ell^\circ=\theta(\psi_{g,\ell-1})$.

Given a pair of classical weights $(\ell,s)$, define the Hecke characters \[ \Psi_{gh}(\ell,s) = \psi_{g,\ell-1}^{-1} \cdot \psi_g' \cdot \mathbb N^{s}, \qquad \Psi_{gh'}(\ell,s) = \psi_{g,\ell-1}^{-1} \cdot \psi_g \cdot \mathbb N^{s}, \qquad \Psi_g(\ell) = \psi_{g,\ell-1}^{-2} \chi \mathbb N^{\ell}. \]

All pairs $(\ell,s)$ such that $\ell > s \geq 1$ belong to the region of interpolation of both Rankin-Hida's $p$-adic $L$-function $L_p(\hg,g^*_{1/\beta},s)$ and Katz's $p$-adic $L$-functions $L_p^{\Katz}(\psi_{\hg h}(\ell,s))$ and $L_p^{\Katz}(\psi_{\hg h'}(\ell,s))$. At such critical pairs, it is readily verified that the following factorization of classical $L$-values occurs up to an $L$-rational fudge factor: $$L(g_\ell,h,s) = L(\psi_{gh}(\ell,s)^{-1},0)  L(\psi_{gh'}(\ell,s)^{-1},0).$$
Using this identity, the same computations as in \cite[Theorem 4.2]{DLR2} show that
there is an $L$-rational fudge factor $\mathfrak f(\ell,s) \in \Lambda^{\otimes 2}$ such that
\begin{equation}\label{fact-2-var}
L_p(\hg,h)(\ell,s) \cdot L_p^{\Katz}(\psi_g(\ell)) = \mathfrak f(\ell,s) \cdot L_p^{\Katz}(\psi_{gh}(\ell,s)) \cdot L_p^{\Katz}(\psi_{gh'}(\ell,s)).
\end{equation}

If we now restrict to $\ell=1$ and invoke Katz's $p$-adic analogue of Kronecker limit formula which asserts that $L_p^{\Katz}(\psi) =  \log_p(u_{\psi_{\ad}}) \, (\mathrm{mod} \, L^\times)$, it follows that
\begin{equation}
L_p(g,g^*,s) = \frac{ \mathfrak f(s)}{\log_p(u_{\psi_{\ad}})} \cdot L_p^{\Katz}(\mathbb N^{s}) \cdot L_p^{\Katz}(\psi_{\ad},s).
\end{equation}


Finally, Gross's main theorem in \cite{Gr} together with the functional equation for Kubota-Leopoldt's $p$-adic $L$-function asserts that
\begin{equation}\label{gross}
L_p^{\Katz}(s) = \zeta_p(s) \cdot L_p(\chi_K \omega,s) \end{equation}
up to a rational fudge factor. This yields the theorem.
\end{proof}


Assume now that $g=\theta(\psi)$ is the theta series of a character of a real quadratic field in which $p$ is inert. In light of Theorem \ref{factorizationCM-in-zetas} it is natural to pose the following question:
\begin{question}
Assume $K$ is real and $p$ remains inert in $K$. Let $u_K$ be a fundamental unit of $K$ and let $L_p(\psi_{\ad} \omega,s)$ denote the Deligne-Ribet $p$-adic $L$-function attached to $\psi_{\ad} \omega$.  Is it true that  \begin{equation}\label{factorizationRM-in-zetas}
L_p(g,g^*,s) \stackrel{?}{=} \frac{1}{\log_p(u_K)} \cdot \zeta_{p}(s) \cdot L_p(\chi_K,s) \cdot L_p(\psi_{\ad} \omega, s)
\end{equation}
up to an $L$-rational fudge factor?
\end{question}

Note that the results of this paper, combined with Darmon--Dasgupta-Pollack's \cite[Theorem 2]{DDP} prove that the above factorization holds  when evaluated at $s=0$ and $s=1$. Indeed, Theorem A in this setting takes the simple form
 \[ L_p(g,g^*,1) = \log_p(v_1)  \pmod{L^{\times}}, \] while \cite[Theorem 2]{DDP} asserts that $L_p(\psi_{\ad} \omega,s)$ vanishes at $s=0$ and \[ L_p'(\psi_{\ad} \omega,0) = \log_p(v_1) \pmod{L^{\times}}. \]
Moreover, $\zeta_p(s)$ has a simple pole at $s=0$ whose residue is a non-zero rational number, and Leopoldt's formula asserts that $L_p(\chi_K,0) = \log_p(u_K) \, (\mathrm{mod} \, L^\times)$.  Putting all together shows that  \eqref{factorizationRM-in-zetas} is true at $s=0$.  The functional equations satisfied by each of the $p$-adic $L$-functions in play ensure that the same is true at $s=1$. This of course falls short from establishing \eqref{factorizationRM-in-zetas}.

\end{document}